\DeclareMathOperator{\Hom}{Hom}
\DeclareMathOperator{\id}{id}
\DeclareMathOperator{\Infil}{Infil}
\DeclareMathOperator{\invlim}{\varprojlim}
\DeclareMathOperator{\Lyn}{Lyn}
\DeclareMathOperator{\Sh}{Sh}
\DeclareMathOperator{\Shuffles}{Shuffles}
\DeclareMathOperator{\trg}{trg}
\DeclareFontFamily{U}{wncy}{}
\DeclareFontShape{U}{wncy}{m}{n}{<->wncyr10}{}
\DeclareSymbolFont{mcy}{U}{wncy}{m}{n}
\DeclareMathSymbol{\Sha}{\mathord}{mcy}{"58}
\DeclareMathSymbol{\sha}{\mathord}{mcy}{"78}
\begin{document}

\newtheorem{thm}{Theorem}[section]
\newtheorem{cor}[thm]{Corollary}
\newtheorem{lem}[thm]{Lemma}
\newtheorem{prop}[thm]{Proposition}
\newtheorem{defin}[thm]{Definition}
\newtheorem{exam}[thm]{Example}
\newtheorem{examples}[thm]{Examples}
\newtheorem{rem}[thm]{Remark}
\newtheorem{case}{\sl Case}
\newtheorem{claim}{Claim}
\newtheorem{prt}{Part}
\newtheorem*{mainthm}{Main Theorem}
\newtheorem*{thmA}{Theorem A}
\newtheorem*{thmB}{Theorem B}
\newtheorem*{thmC}{Theorem C}
\newtheorem*{thmD}{Theorem D}
\newtheorem{question}[thm]{Question}
\newtheorem*{notation}{Notation}
\swapnumbers
\newtheorem{rems}[thm]{Remarks}
\newtheorem*{acknowledgment}{Acknowledgment}

\newtheorem{questions}[thm]{Questions}
\numberwithin{equation}{section}

\newcommand{\Bock}{\mathrm{Bock}}
\newcommand{\dec}{\mathrm{dec}}
\newcommand{\diam}{\mathrm{diam}}
\newcommand{\dirlim}{\varinjlim}
\newcommand{\discup}{\ \ensuremath{\mathaccent\cdot\cup}}
\newcommand{\divis}{\mathrm{div}}
\newcommand{\gr}{\mathrm{gr}}
\newcommand{\nek}{,\ldots,}
\newcommand{\ind}{\hbox{ind}}
\newcommand{\inv}{^{-1}}
\newcommand{\isom}{\cong}
\newcommand{\Massey}{\mathrm{Massey}}
\newcommand{\ndiv}{\hbox{$\,\not|\,$}}
\newcommand{\nil}{\mathrm{nil}}
\newcommand{\pr}{\mathrm{pr}}
\newcommand{\sep}{\mathrm{sep}}
\newcommand{\tagg}{^{''}}
\newcommand{\tensor}{\otimes}
\newcommand{\alp}{\alpha}
\newcommand{\gam}{\gamma}
\newcommand{\Gam}{\Gamma}
\newcommand{\del}{\delta}
\newcommand{\Del}{\Delta}
\newcommand{\eps}{\epsilon}
\newcommand{\lam}{\lambda}
\newcommand{\Lam}{\Lambda}
\newcommand{\sig}{\sigma}
\newcommand{\Sig}{\Sigma}
\newcommand{\bfA}{\mathbf{A}}
\newcommand{\bfB}{\mathbf{B}}
\newcommand{\bfC}{\mathbf{C}}
\newcommand{\bfF}{\mathbf{F}}
\newcommand{\bfP}{\mathbf{P}}
\newcommand{\bfQ}{\mathbf{Q}}
\newcommand{\bfR}{\mathbf{R}}
\newcommand{\bfS}{\mathbf{S}}
\newcommand{\bfT}{\mathbf{T}}
\newcommand{\bfZ}{\mathbf{Z}}
\newcommand{\dbA}{\mathbb{A}}
\newcommand{\dbC}{\mathbb{C}}
\newcommand{\dbF}{\mathbb{F}}
\newcommand{\dbN}{\mathbb{N}}
\newcommand{\dbQ}{\mathbb{Q}}
\newcommand{\dbR}{\mathbb{R}}
\newcommand{\dbU}{\mathbb{U}}
\newcommand{\dbZ}{\mathbb{Z}}
\newcommand{\grf}{\mathfrak{f}}
\newcommand{\gra}{\mathfrak{a}}
\newcommand{\grA}{\mathfrak{A}}
\newcommand{\grB}{\mathfrak{B}}
\newcommand{\grd}{\mathfrak{d}}
\newcommand{\grh}{\mathfrak{h}}
\newcommand{\grI}{\mathfrak{I}}
\newcommand{\grL}{\mathfrak{L}}
\newcommand{\grm}{\mathfrak{m}}
\newcommand{\grp}{\mathfrak{p}}
\newcommand{\grq}{\mathfrak{q}}
\newcommand{\grR}{\mathfrak{R}}
\newcommand{\calA}{\mathcal{A}}
\newcommand{\calB}{\mathcal{B}}
\newcommand{\calC}{\mathcal{C}}
\newcommand{\calE}{\mathcal{E}}
\newcommand{\calG}{\mathcal{G}}
\newcommand{\calH}{\mathcal{H}}
\newcommand{\calK}{\mathcal{K}}
\newcommand{\calL}{\mathcal{L}}
\newcommand{\calM}{\mathcal{M}}
\newcommand{\calW}{\mathcal{W}}
\newcommand{\calV}{\mathcal{V}}

\makeatletter
\renewcommand{\BibLabel}{%
    \Hy@raisedlink{\hyper@anchorstart{cite.\CurrentBib}\hyper@anchorend}%
    [\thebib]%
}
\makeatother

\title[Cohomology and Lyndon words]{The Cohomology of canonical quotients of free groups and Lyndon words}

\author{ Ido Efrat}
\address{Department of Mathematics\\
Ben-Gurion University of the Negev\\
P.O.\ Box 653, Be'er-Sheva 84105\\
Israel} \email{efrat@math.bgu.ac.il}

\thanks{This work was supported by the Israel Science Foundation (grant No.\ 152/13).}

\keywords{Profinite cohomology, lower $p$-central filtration, Lyndon words, Shuffle relations,  Massey products}

\subjclass[2010]{Primary 12G05, Secondary 20J06,  68R15}

\maketitle

\begin{abstract}
For a prime number $p$ and a free profinite group $S$, let $S^{(n,p)}$ be the $n$th term of its lower $p$-central filtration, and $S^{[n,p]}$ the corresponding quotient.
Using tools from the combinatorics of words, we construct a canonical basis of the cohomology group $H^2(S^{[n,p]},\dbZ/p)$, which we call the Lyndon basis, and use it to obtain structural results on this group.
We show a duality between the Lyndon basis and canonical generators of $S^{(n,p)}/S^{(n+1,p)}$.
We prove that the cohomology group satisfies  shuffle relations, which for small values of $n$ fully describe it.
\end{abstract}

\section{Introduction}
Let $p$ be a fixed prime number.
For a profinite group $G$ one defines the {\bf  lower $p$-central filtration} $G^{(n,p)}$, $n=1,2\nek$ inductively by
\[
G^{(1,p)}=G, \quad G^{(n+1,p)}=(G^{(n,p)})^p[G,G^{(n,p)}].
\]
Thus $G^{(n+1,p)}$ is the closed subgroup of $G$ generated by the powers $h^p$ and commutators $[g,h]=g\inv h\inv gh$, where $g\in G$ and $h\in G^{(n,p)}$.
We also set $G^{[n,p]}=G/G^{(n,p)}$.

\smallskip

Now let $S$ be a free profinite group on the basis $X$, and let $n\geq2$.
Then $S^{[n,p]}$ is a free object in the category of pro-$p$ groups $G$ with $G^{(n,p)}$ trivial.
As with any pro-$p$ group,  the cohomology groups $H^l(S^{[n,p]})=H^l(S^{[n,p]},\dbZ/p)$, $l=1,2$,  capture the main information on generators and relations, respectively, in a minimal presentation of $S^{[n,p]}$.
The group $H^1(S^{[n,p]})$ is just the dual $(S^{[2,p]})^\vee\isom\bigoplus_{x\in X}\dbZ/p$, and it remains to understand $H^2(S^{[n,p]})$.

\smallskip

When $n=2$ the quotient $S^{[2,p]}$ is an elementary abelian $p$-group, and the structure of $H^2(S^{[2,p]})$ is well-known.
Namely, for $p>2$ one has an isomorphism
\[
H^1(S^{[2,p]})\oplus{\textstyle\bigwedge^2} H^1(S^{[2,p]})\xrightarrow{\sim}H^2(S^{[2,p]}),
\]
which is the Bockstein map on the first component, and the cup product on the second component.
Furthermore, taking a basis $\chi_x$, $x\in X$, of $H^1(S^{[2,p]})$ dual to $X$, there is a fundamental duality between $p$th powers and commutators in the presentation of $S$ and Bockstein elements and cup products, respectively, of the $\chi_x$ (see \cite{NeukirchSchmidtWingberg}*{Ch.\ III, \S9} for details).
These facts have numerous applications in Galois theory, ranging from class field theory (\cite{Koch02}, \cite{NeukirchSchmidtWingberg}), the works by Serre and Labute on the pro-$p$ Galois theory of $p$-adic fields (\cite{SerreDemuskin}, \cite{Labute67}), the structure theory of general absolute Galois groups (\cite{MinacSpira96}, \cite{EfratMinac11}), the birational anabelian phenomena (\cite{Bogomolov91}, \cite{Topaz15}), Galois groups with restricted ramification (\cite{Vogel05}, \cite{Schmidt10}), and mild groups (\cite{Labute06}, \cite{Forre11}, \cite{LabuteMinac11}),   to mention only a few of the pioneering works in these areas.

\smallskip

In this paper we generalize the above results from the case $n=2$ to arbitrary $n\geq2$.
Namely, we give a complete description of $H^2(S^{[n,p]})$ in terms of a canonical linear basis of this cohomology group.
This basis is constructed using tools from the {\sl combinatorics of words} -- in particular, the  {\bf Lyndon words} in the alphabet $X$, i.e., words which are lexicographically smaller than all their proper suffixes (for a fixed total order on $X$).
We call it the {\bf Lyndon basis}, and use it to prove several structural results on  $H^2(S^{[n,p]})$, and in particular to compute its size (see below).

\smallskip

The Lyndon basis constructed here can be most naturally described in terms of central extensions, as follows:
For $1\leq s\leq n$ let $\dbU$ denote the group of all unipotent upper-triangular $(s+1)\times(s+1)$-matrices over the ring $\dbZ/p^{n-s+1}$.
There is a central extension
\[
0\to\dbZ/p\to\dbU\to \dbU^{[n,p]}\to1
\]
(Proposition \ref{properties of dbU}).
It corresponds to a cohomology element $\gam_{n,s}\in H^2(\dbU^{[n,p]})$.
For a profinite group $G$ and a continuous homomorphism $\rho\colon G\to\dbU$ we write $\bar\rho\colon G^{[n,p]}\to\dbU^{[n,p]}$ for the induced homomorphism, and $\bar\rho^*\gam_{n,s}$ for the pullback to $H^2(G^{[n,p]})$.
Now for any word $w=(x_1\cdots x_s)$ in the alphabet $X$ we define a homomorphism
$\rho^w\colon S\to \dbU$ by setting the entry $(\rho^w(\sig))_{ij}$ to be the coefficient of the subword $(x_i\cdots x_{j-1})$ in the power series $\Lam(\sig)$, where $\Lam\colon S\to(\dbZ/p^{n-s+1})\langle\langle X\rangle\rangle^\times$ is the {\bf Magnus homomorphism}, defined on the generators $x\in X$ by $\Lam(x)=1+x$ (see \S3 and \S6 for more details).
The Lyndon basis is now given by:

\begin{mainthm}
The pullbacks $\alp_{w,n}=(\bar\rho^w)^*\gam_{n,s}$, where $w$ ranges over all Lyndon words of length $1\leq s\leq n$ in the alphabet $X$, form a linear basis of $H^2(S^{[n,p]})$ over $\dbZ/p$.
\end{mainthm}

We further show a duality between the Lyndon basis and certain canonical elements $\sig_w\in S^{(n,p)}$, with $w$ a Lyndon word of length $\leq n$, generalizing the above mentioned duality in the case $n=2$ (see Corollary \ref{upper-triangular}).

\smallskip

The cohomology elements $\bar\rho^*\gam_{n,s}$ include the Bockstein elements (for $s=1$), the cup products (for $n=s=2$), and more generally, the elements of $n$-fold Massey products in $H^2(G^{[n,p]})$ (for $n=s\geq2$); see Examples \ref{examples of  alp}.
The full spectrum $\bar\rho^*\gam_{n,s}$, $1\leq s\leq n$, appears to give new significant ``external" objects in profinite cohomology, which to our knowledge have not been investigated so far in general.

\smallskip

Lyndon words are known to have tight connections with the {\bf  shuffle algebra},
and indeed, the $\alp_{w,n}$ for arbitrary words $w$ of length $\leq n$ in $X$ satisfy natural {\bf shuffle relations} (Theorem \ref{shuffle relations}).
In \S\ref{section on n=2}-\S\ref{section on n=3} we show that for $n=2,3$ these shuffle relations fully describe $H^2(S^{[n,p]})$, provided that $p>2$, $p>3$, respectively (for $n=2$ this was essentially known).
Interestingly, related considerations arise also in the context of {\sl multiple zeta values}, see e.g.\ \cite{MinhPetitot00}, although we are not aware of a direct connection.

\smallskip

The Lyndon words on $X$ form a special instance of {\sl Hall sets}, which are well-known to have fundamental role in the structure theory of free groups and free Lie algebras (see \cite{Reutenauer93}, \cite{SerreLie}).
In addition, the Lyndon words have a \textbf{triangularity property} (see Proposition \ref{Lam(tau w)}(b)).
This property allows us to construct certain upper-triangular unipotent matrices
that express a (semi-)duality between the $\sig_w$ and  the cohomology elements $\alp_{w,n}$.

\smallskip

We now outline the proof that the $\alp_{w,n}$ form a linear basis of $H^2(S^{[n,p]})$.
For simplicity we assume for the moment that $X$ is finite.
To each Lyndon word $w$ of length $1\leq s\leq n$ one associates in a canonical way an element $\tau_w$ of the $s$-th term of the lower central series of $S$ (see \S4).
The cosets of the powers $\sig_w=\tau_w^{p^{n-s}}$ generate $S^{(n,p)}/S^{(n+1,p)}$ (Theorem \ref{generators}).
Using the special structure of the lower $p$-central filtration of $\dbU$ we define for any two Lyndon words $w,w'$ of length $\leq n$ a value $\langle w,w'\rangle_n\in\dbZ/p$ (see \S\ref{section on pairings}).
The triangularity property of Lyndon words implies that the matrix $\bigl(\langle w,w'\rangle_n\bigr)$ is unipotent upper-triangular, whence invertible.
Turning now to cohomology, we define a natural perfect pairing
\[
(\cdot,\cdot)_n\colon S^{(n,p)}/S^{(n+1,p)}\times H^2(S^{[n,p]})\to\dbZ/p.
\]
Cohomological computations show that, for Lyndon words $w,w'$ of length $\leq n$, one has $\langle w,w'\rangle_n=(\sig_w,\alp_{w',n})_n$.
Hence the matrix $\bigl((\sig_w,\alp_{w',n})_n\bigr)$ is also invertible.
We then conclude that the $\alp_{w',n}$ form a basis of $H^2(S^{[n,p]})$ (Theorem \ref{basis}).
This immediately determines the dimension of the latter cohomology group, in terms of Witt's
{\bf necklace function}, which counts the number of Lyndon words over $X$ of a given length (Corollary \ref{dim of H2}).

\smallskip

In the special case $n=2$, the theory developed here generalizes the above description of $H^2(S^{[2,p]})$ in terms of the Bockstein map and cup products (see \S\ref{section on n=2} for details).
Namely, the matrix $(\langle w,w'\rangle_2)$ is the identity matrix, which gives the above duality between $p$th powers/commutators and Bockstein elements/cup products.
Likewise, the shuffle relations just recover the basic fact that the cup product factors via the exterior product.

In \S\ref{section on n=3} we describe our theory explicitly also for the (new) case $n=3$.

\smallskip

While here we focus primarily on free profinite groups, it may be interesting to study the canonical elements $\bar\rho_*\gam_{n,s}$ for more general profinite groups $G$, in particular, when $G=G_F$ is the absolute Galois group of a field $F$.
For instance, when $n=2$, they were used in \cite{EfratMinac11} (following \cite{MinacSpira96} and \cite{AdemKaragueuzianMinac99}) and \cite{CheboluEfratMinac12} to describe the quotient $G_F^{[3,p]}$.
Triple Massey products for $G_F$ (which correspond to the case $n=s=3$) were also extensively studied in recent years -- see \cite{EfratMatzri15},  \cite{MinacTan15b}, \cite{MinacTan16}, \cite{MinacTan17}, and \cite{Wickelgren12} and the references therein.

I thank Claudio Quadrelli and the anonymous referee for their careful reading of this paper and for their very valuable comments and suggestions on improving the exposition.

\section{Words}
\label{section on words}
Let $X$ be a nonempty set, considered as an alphabet.
Let $X^*$ be the free monoid on $X$.
We view its elements as associative words on $X$.
The length of a word $w$ is denoted by $|w|$.
We write $\emptyset$ for the empty word, and $ww'$ for the concatenation of words $w$ and $w'$.

Recall that a \textbf{magma} is a set $\calM$ with a binary operation $(\cdot,\cdot)\colon \calM\times \calM\to \calM$.
A morphism of magmas is a map which commutes with the associated binary operations.
There is a \textbf{free magma} $\calM_X$ on $X$, unique up to an isomorphism;
that is, $X\subseteq \calM_X$, and for every magma $(\cdot,\cdot)\colon N\times N\to N$ and a map $f_0\colon X\to N$ there is a magma morphism $f\colon \calM_X\to N$ extending $f_0$.
See  \cite{SerreLie}*{Ch.\ IV, \S1} for an explicit construction of $\calM_X$.
The elements of $\calM_X$ may be viewed as non-associative words on $X$.

The monoid $X^*$ is a magma with respect to concatenation, so the universal property of $\calM_X$ gives rise to a unique magma morphism $f\colon \calM_X\to X^*$, called  the \textbf{foliage} (or \textbf{brackets dropping}) map, such that $f(x)=x$ for $x\in X$.

Let $\calH$ be a subset of $\calM_X$ and $\leq$ a total order on $\calH$.
We say that $(\calH,\leq)$ is a \textbf{Hall set in $\calM_X$}, if the following conditions hold \cite{Reutenauer93}*{\S4.1}:
\begin{enumerate}
\item[(i)]
$X\subseteq \calH$;
\item[(ii)]
If $h=(h',h'')\in\calH\setminus X$, then $h<h''$;
\item[(iii)]
For $h=(h',h'')\in \calM_X \setminus X$, one has $h\in\calH$ if and only if
\begin{itemize}
\item $h',h''\in\calH$ and $h'<h''$; and
\item either $h'\in X$, or $h'=(v,u)$ where $u\geq h''$.
\end{itemize}
\end{enumerate}
Given a Hall set $(\calH,\leq)$ in $\calM_X$ we call $H=f(\calH)$ a \textbf{Hall set in $X^*$}.

Every $w\in H$ can be written as $w=f(h)$ for a \textsl{unique} $h\in\calH$ \cite{Reutenauer93}*{Cor.\ 4.5}.
If $w\in H\setminus X$,  then we can uniquely write $h=(h',h'')$ with $h',h''\in \calH$, and call $w=w'w''$, where $w'=f(h')$ and $w''=f(h'')$, the \textbf{standard factorization} of $w$ \cite{Reutenauer93}*{p.\ 89}.

\medskip

Next we fix a total order $\leq$ on $X$, and define a total order $\leq_{\rm alp}$ (the {\bf alphabetical} order)  on $X^*$ as follows:
Let $w_1,w_2\in X^*$.
 Then $w_1\leq_{\rm alp} w_2$ if and only if $w_2=w_1v$ for some $v\in X^*$, or $w_1=vx_1u_1$, $w_2=vx_2u_2$ for some words $v,u_1,u_2$ and some letters $x_1,x_2\in X$ with $x_1<x_2$.
Note that the restriction of $\leq_{\rm alp}$ to $X^n$ is the lexicographic order.

In addition, we order $\dbZ_{\geq0}\times X^*$ lexicographically with respect to the usual order on $\dbZ_{\geq0}$ and the order $\leq_{\rm alp}$ on $X^*$.
We then define a second total order  $\preceq$ on $X^*$ by setting
\begin{equation}
\label{preceq}
w_1\preceq w_2\quad \Longleftrightarrow\quad (|w_1|,w_1)\leq(|w_2|,w_2)
\end{equation}
with respect to the latter order on $\dbZ_{\geq0}\times X^*$.

A nonempty word $w\in X^*$ is called a \textbf{Lyndon word} if  it is smaller in $\leq_{\rm alp}$ than all its non-trivial proper right factors.
Equivalently, no non-trivial rotation leaves $w$ invariant, and $w$ is lexicographically strictly smaller than all its rotations $\neq w$ (\cite{ChenFoxLyndon58}*{Th.\ 1.4}, \cite{Reutenauer93}*{Cor.\ 7.7}).
We denote the set of all Lyndon words on $X$ by $\Lyn(X)$, and the set of all such words of length $n$ (resp., $\leq n$) by $\Lyn_n(X)$ (resp., $\Lyn_{\leq n}(X)$).
The set $\Lyn(X)$, totally ordered with respect to $\leq_{\rm alp}$, is a Hall set \cite{Reutenauer93}*{Th.\ 5.1}.

\begin{exam}
\label{examples of Lyndon words}
\rm
The Lyndon words of length $\leq4$ on $X$ are
\[
\begin{split}
&(x) \hbox{ for } x\in X,\\
& (xy), (xxy), (xyy), (xxxy), (xxyy), (xyyy) \hbox{ for } x,y\in X, \ x<y, \\
&  (xyz), (xzy),(xxyz), (xxzy), (xyxz), (xyyz), (xyzy), (xyzz), \\
& \qquad\qquad \qquad(xzyy), (xzyz), (xzzy) \hbox{  for } x,y,z\in X, \  x<y<z,\\
& (xyzt), (xytz), (xzyt),(xzty),(xtyz), (xtzy) \hbox{  for } x,y,z,t\in X, \  x<y<z<t
\end{split}
\]
\end{exam}

The {\bf necklace map} (also called the {\bf Witt map}) is defined for integers $n,m\geq1$ by
\[
\varphi_n(m)=\frac1n\sum_{d|n}\mu(d)m^{n/d}.
\]
Here $\mu$ is the M\"obius function, defined by $\mu(d)=(-1)^k$, if $d$ is a product of $k$ distinct prime numbers, and $\mu(d)=0$ otherwise;
Alternatively,  $1/\zeta(s)=\sum_{n=1}^\infty\mu(n)/n^s$, where $\zeta(s)$ is the Riemann zeta function and $s$ is a complex number with real part $>1$.
When $m=q$ is a prime power, $\varphi_n(q)$ also counts the number of irreducible monic polynomials of degree $n$ over a field of $q$ elements \cite{Reutenauer93}*{\S7.6.2}.
We also define $\varphi_n(\infty)=\infty$.
One has \cite{Reutenauer93}*{Cor.\ 4.14}
\begin{equation}
\label{size of Lyn}
|\Lyn_n(X)|=\varphi_n(|X|).
\end{equation}

\section{Power series}
\label{section on power series}
We fix a commutative unital ring $R$.
Recall that a bilinear map $M\times N\to R$ of $R$-modules is {\bf non-degenerate} if its left and right kernels are trivial, i.e., the induced maps $M\to\Hom(N,R)$ and $N\to\Hom(M,R)$ are injective.
The bilinear map is {\bf perfect} if these two maps are isomorphisms.

Let $R\langle X\rangle$ be the free associative $R$-algebra on $X$.
We may view its elements as polynomials over $R$ in the non-commuting variables $x\in X$.
The additive group of $R\langle X\rangle$ is the free $R$-module on the basis $X^*$.
We grade $R\langle X\rangle$ by total degree.
Let $R\langle \langle X\rangle\rangle$ be the ring of all formal power series in the non-commuting variables $x\in X$ and coefficients in $R$.
For $f\in R\langle\langle X\rangle\rangle$ we write $f_w$ for the coefficient of $f$ at $w\in X^*$.
Thus $f=\sum_{w\in X^*}f_ww$.
There are natural embedings $X^*\subseteq R\langle X\rangle\subseteq R\langle\langle X\rangle\rangle$, where we identify $w\in X^*$ with the series $f$ such that $f_w=1$ and $f_{w'}=0$ for $w'\neq w$.
There is a well-defined non-degenerate bilinear map of  $R$-modules
\begin{equation}
\label{bilinear}
R\langle\langle X\rangle\rangle\times R\langle X\rangle\to R, \quad (f,g)\mapsto \sum_{w\in X^*}f_wg_w;
\end{equation}
See \cite{Reutenauer93}*{p.\ 17}.
For every integer $n\geq0$, it restricts to a non-degenerate bilinear map
on the homogenous components of degree $n$.

We may identify the additive group of $R\langle\langle X\rangle\rangle$ with $R^{X^*}$ via the map $f\mapsto(f_w)_w$.
When $R$ is equipped with a profinite ring topology (e.g.\ when $R$ is finite), the product topology on $R^{X^*}$  induces a profinite group topology on $R\langle\langle X\rangle\rangle$.
Moreover, the multiplication map of $R\langle\langle X\rangle\rangle$ is continuous, making it a profinite ring.
The group $R\langle\langle X\rangle\rangle^\times$ of all invertible elements in $R\langle\langle X\rangle\rangle$ is then a profinite group.

Next we recall from \cite{FriedJarden08}*{\S17.4} the following terminology and facts on free profinite groups.
Let $G$ be a profinite group and $X$ a set.
A map $\psi\colon X\to G$ {\bf converges to $1$}, if for every open normal subgroup $N$ of $G$, the set $X\setminus\psi\inv(N)$ is finite.
We say that a profinite group $S$ is a {\bf free profinite group on basis $X$} with respect to a map $\iota\colon X\to S$ if
\begin{enumerate}
\item[(i)]
$\iota\colon X\to S$ converges to $1$ and $\iota(X)$ generates $S$ as a profinite group;
\item[(ii)]
For every profinite group $G$ and a map $\psi\colon X\to G$ converging to $1$, there is a unique continuous homomorphism $\hat\psi\colon S\to G$ such that $\psi=\hat\psi\circ\iota$ on $X$.
\end{enumerate}
A free profinite group on $X$ exists, and is unique up to a continuous isomorphism.
We denote it by $S_X$.
The map $\iota$ is then injective, and we identify $X$ with its image in $S_X$.

We define the (profinite) {\bf Magnus homomorphism} $\Lam_{X,R}\colon S_X\to R\langle\langle  X\rangle\rangle^\times$ as follows (compare \cite{Efrat14}*{\S5}):

Assume first that $X$ is finite.
For  $x\in X$ one has $1=(1+x)\sum_{i=0}^\infty(-1)^ix^i$ in $R\langle\langle X\rangle\rangle$, so $1+x\in R\langle\langle X\rangle\rangle^\times$.
Hence, by (ii), the map $\psi\colon X\to R\langle\langle X\rangle\rangle^\times$, $x\mapsto 1+x$, uniquely extends to a continuous homomorphism $\Lam_{X,R}\colon S_X\to R\langle\langle X\rangle\rangle^\times$.

Now suppose that $X$ is arbitrary.
Let $Y$ range over all finite subsets of $X$.
The map $\psi\colon X\to S_Y$, which is the identity on $Y$ and $1$ on $X\setminus Y$, converges to $1$.
Hence it extends to a unique continuous group homomorphism $S_X\to S_Y$.
Also, there is a unique continuous $R$-algebra homomorphism $R\langle\langle X\rangle\rangle\to R\langle\langle Y\rangle\rangle$, which is the identity on $Y$ and $0$ on $X\setminus Y$.
Then
\[
S_X=\invlim S_Y, \quad R\langle\langle X\rangle\rangle=\invlim R\langle\langle Y\rangle\rangle, \quad
 R\langle\langle X\rangle\rangle^\times=\invlim R\langle\langle Y\rangle\rangle^\times.
\]
We define $\Lam_{X,R}=\invlim\Lam_{Y,R}$.
It is functorial in both $X$ and $R$ in the natural way.
Note that $\Lam_{X,R}(x)=1+x$ for $x\in X$.

In the sequel, $X$ will be fixed, so we abbreviate $S=S_X$ and $\Lam_R=\Lam_{X,R}$.

\medskip

For $\sig\in S$ and a word $w\in X^*$ we denote the coefficient of $w$ in $\Lam_R(\sig)$ by $\eps_{w,R}(\sig)$.
Thus
\[
\Lam_R(\sig)=\sum_{w\in X^*}\eps_{w,R}(\sig)w.
\]
By the construction of $\Lam_R$, we have $\eps_{\emptyset,R}(\sig)=1$.
Since $\Lam_R$ is a homomorphism,  for every $\sig,\tau\in S$ and $w\in X^*$ one has
\begin{equation}
\label{eps of product}
\eps_{w,R}(\sig\tau)=\sum_{w=u_1u_2}\eps_{u_1,R}(\sig)\eps_{u_2,R}(\tau).
\end{equation}

We will also need the classical {\sl discrete} version of the Magnus homomorphism.
To define it, assume that $X$ is finite, and let $F_X$ be the free group on basis $X$.
There is a natural homomorphism $F_X\to S_X$.
The {\bf discrete Magnus homomorphism} $\Lam_\dbZ^{\rm disc}\colon F_X\to\dbZ\langle\langle X\rangle\rangle^\times$ is defined again by $x\mapsto 1+x$.
There is a commutative square
\begin{equation}
\label{discrete Magnus hom}
\xymatrix{
F_X\ar[r] \ar[d]_{\Lam_\dbZ^{\rm disc}} & S_X\ar[d]^{\Lam_{\dbZ_p}}\\
\dbZ\langle\langle X\rangle\rangle^\times\ar@{^{(}->}[r] &\dbZ_p\langle\langle X\rangle\rangle^\times.
}
\end{equation}

\section{Lie algebra constructions}
\label{section on Lie algebra constructions}
Recall that the {\bf lower central filtration} $G^{(n,0)}$, $n=1,2\nek$ of a profinite group $G$ is defined inductively by
\[
G^{(1,0)}=G,\quad G^{(n+1,0)}=[G^{(n,0)},G].
\]
Thus $G^{(n+1,0)}$ is generated as a profinite group by all elements of the form  $[\sig,\tau]$ with $\sig\in G^{(n,0)}$ and $\tau\in G$.
One has $[G^{(n,0)},G^{(m,0)}]\leq G^{(n+m,0)}$ for every $n,m\geq1$ (compare \cite{SerreLie}*{Part I, Ch.\ II, \S3}).

\begin{prop}
\label{Koch}
Let $S=S_X$ and let $\sig\in S$.
Then:
\begin{enumerate}
\item[(a)]
$\sig\in S^{(n,0)}$ if and only if $\eps_{w,\dbZ_p}(\sig)=0$ for every $w\in X^*$ with $1\leq|w|<n$;
\item[(b)]
$\sig\in S^{(n,p)}$ if and only if $\eps_{w,\dbZ_p}(\sig)\in p^{n-|w|}\dbZ_p$ for every $w\in X^*$ with $1\leq|w|<n$.
\end{enumerate}
\end{prop}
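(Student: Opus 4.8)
The plan is to reformulate both conditions as the membership of $\Lam_{\dbZ_p}(\sig)$ in a multiplicative coset of $1$ in $\dbZ_p\langle\langle X\rangle\rangle^\times$. Let $I$ denote the augmentation ideal of $\dbZ_p\langle\langle X\rangle\rangle$, i.e.\ the closed two-sided ideal of series with zero constant term, so that $I^d$ consists of the series supported on words of length $\geq d$. Then (a) says precisely that $\Lam_{\dbZ_p}(\sig)\in 1+I^n$, while (b) says that $\Lam_{\dbZ_p}(\sig)\in 1+I_{(n)}$, where
\[
I_{(n)}=p^{n-1}I+p^{n-2}I^2+\cdots+pI^{n-1}+I^n.
\]
This is the weight-$\geq n$ part of $I$ for the weighting that assigns weight $1$ to $p$ and to each $x\in X$; one checks directly that membership in $1+I_{(n)}$ is equivalent to the coefficientwise condition $\eps_{w,\dbZ_p}(\sig)\in p^{n-|w|}\dbZ_p$. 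As $\Lam_{\dbZ_p}$, the filtrations $S^{(n,0)}$ and $S^{(n,p)}$, and the coefficients $\eps_{w,\dbZ_p}$ are all compatible with the inverse limit over the finite subsets $Y\subseteq X$, I would first reduce to $X$ finite.

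For the forward implications I would induct on $n$, using that $\Lam_{\dbZ_p}$ is a homomorphism together with two elementary estimates valid for closed two-sided ideals $A,B$ of $\dbZ_p\langle\langle X\rangle\rangle$: the commutator estimate $[1+A,1+B]\subseteq 1+(AB+BA)$, which follows from $[u,v]-1=u\inv v\inv(\alpha\beta-\beta\alpha)$ for $u=1+\alpha$, $v=1+\beta$; and the power estimate $(1+A)^p\subseteq 1+pA+A^p$, which follows from $p\mid\binom{p}{k}$ for $0<k<p$. For (a) one then only needs $I^aI^b\subseteq I^{a+b}$. For (b), since $S^{(n+1,p)}$ is generated by the $p$-th powers of $S^{(n,p)}$ and the commutators $[S,S^{(n,p)}]$, the inductive step comes down to the inclusions $pI_{(n)}+I_{(n)}^p\subseteq I_{(n+1)}$ and $I\cdot I_{(n)}+I_{(n)}\cdot I\subseteq I_{(n+1)}$, each of which is a routine weight count (e.g.\ $I_{(n)}^p$ has weight $\geq pn\geq n+1$).

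The reverse implications are the heart of the matter and, I expect, the main obstacle. Here I would again induct on $n$. The forward direction shows that $\Lam_{\dbZ_p}$ induces well-defined \emph{additive} maps on the associated graded quotients,
\[
\gr^0_n=S^{(n,0)}/S^{(n+1,0)}\to I^n/I^{n+1},\qquad \gr^p_n=S^{(n,p)}/S^{(n+1,p)}\to I_{(n)}/I_{(n+1)},
\]
the products of two elements of $I^n$ (resp.\ $I_{(n)}$) landing in $I^{n+1}$ (resp.\ $I_{(n+1)}$) by the weight count. Granting the inductive hypothesis at level $n$, the reverse implication at level $n+1$ becomes exactly the \emph{injectivity} of these two maps: if $\Lam_{\dbZ_p}(\sig)\in 1+I^{n+1}$, then already $\sig\in S^{(n,0)}$, its class in $\gr^0_n$ is sent to $0$, and injectivity forces $\sig\in S^{(n+1,0)}$; similarly for the $p$-filtration.

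Thus everything reduces to the faithfulness of the Magnus map on graded pieces, which I would extract from the classical structure theory. Via the commutative square \eqref{discrete Magnus hom} I pass to the discrete free group $F_X$, where the coefficients of $\Lam_\dbZ^{\rm disc}(\sig)$ are integers equal to $\eps_{w,\dbZ_p}(\sig)$; there the assertion for $\gr^0$ is the Magnus--Witt theorem, namely that the lower-central graded of $F_X$ is the free Lie ring and embeds into the tensor algebra via the discrete Magnus map (see \cite{SerreLie}), and the profinite statement follows by taking closures. For $\gr^p$ the corresponding input is Lazard's description of the $p$-lower-central (Zassenhaus) filtration: its graded is a free restricted Lie algebra, which embeds into the weighted associated graded ring $\gr(\dbZ_p\langle\langle X\rangle\rangle)\isom\dbF_p\langle X\rangle[\pi]$, with $\pi$ the symbol of $p$. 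Granting these two embeddings, the maps $\gr^0_n$ and $\gr^p_n$ are injective, which closes the induction. The genuinely nontrivial content is exactly this injectivity on the associated graded; the remaining steps are the bookkeeping of the two estimates above.
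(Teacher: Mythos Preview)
The paper does not give a self-contained argument: it cites Gr\"un--Magnus for (a) and Koch \cite{Koch60} for (b) in the discrete case, and passes to the profinite case by continuity via the square \eqref{discrete Magnus hom}. Your proposal is essentially a sketch of how those classical proofs run, and its architecture is sound --- forward implications by induction with the commutator and $p$-th power estimates on $1+I^n$ and $1+I_{(n)}$, reverse implications by induction reducing to injectivity of the graded maps.

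One caution on (b). You write ``the $p$-lower-central (Zassenhaus) filtration'' and describe its graded as a free \emph{restricted} Lie algebra. These are two different filtrations. For the Zassenhaus (Jennings, dimension) filtration the $p$-th power sends degree $n$ to degree $pn$, and the graded is a restricted $\dbF_p$-Lie algebra; that is the setting of the Lazard--Quillen theorem. For the lower $p$-central filtration $G^{(n,p)}$ used in this paper the $p$-th power sends degree $n$ to degree $n+1$ --- this is exactly the degree-$1$ operator $\pi$ of \S5 --- and the graded is a Lie algebra over $\dbF_p[\pi]$, not a restricted one. Your identification $\gr\,\dbZ_p\langle\langle X\rangle\rangle\isom\dbF_p\langle X\rangle[\pi]$ is correct for the $I_{(n)}$-filtration, but the embedding $S^{(n,p)}/S^{(n+1,p)}\hookrightarrow I_{(n)}/I_{(n+1)}$ you need is not the Lazard--Quillen statement for the Zassenhaus filtration; it is precisely Koch's theorem. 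So at the decisive step your citation is off target, and as written you are invoking the result being proved under another name. The honest fix is either to cite Koch directly (as the paper does), or to supply an independent argument --- for instance via the identity $S^{(n,p)}=\prod_{s=1}^n(S^{(s,0)})^{p^{n-s}}$ of \cite{NeukirchSchmidtWingberg}*{Prop.\ 3.8.6} combined with the Magnus--Witt injectivity you already used for (a).
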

\begin{proof}
In the discrete case (a) and (b) are due to Gr\"un and Magnus (see \cite{SerreLie}*{Part I, Ch.\ IV, th. 6.3}) and Koch \cite{Koch60}, respectively.
The results in the profinite case follow by continuity.

For other approaches see \cite{Morishita12}*{Prop.\ 8.15}, \cite{ChapmanEfrat16}*{Example 4.5}, and \cite{MinacTan15a}*{Lemma 2.2(d)}.
\end{proof}

We will need the following profinite analog of \cite{Fenn83}*{Lemma 4.4.1(iii)}.

\begin{lem}
\label{commutators}
Let $\sig\in S^{(n,0)}$ and $\tau\in S^{(m,0)}$, and let $w\in X^*$ have length $n+m$.
Write $w=u_1u_2=u'_2u'_1$ with $|u_1|=|u'_1|=n$ and $|u_2|=|u'_2|=m$.
Then
\[
\eps_{w,\dbZ_p}([\sig,\tau])=\eps_{u_1,\dbZ_p}(\sig)\eps_{u_2,\dbZ_p}(\tau)-\eps_{u'_2,\dbZ_p}(\tau)\eps_{u'_1,\dbZ_p}(\sig).
\]
\end{lem}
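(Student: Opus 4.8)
The plan is to carry out the whole computation inside the graded profinite ring $\dbZ_p\langle\langle X\rangle\rangle$, extracting the homogeneous component of degree $n+m$ of $\Lam([\sig,\tau])$; throughout I abbreviate $\Lam=\Lam_{\dbZ_p}$ and $\eps_w=\eps_{w,\dbZ_p}$. First I would write $\Lam(\sig)=1+A$ and $\Lam(\tau)=1+B$. By Proposition \ref{Koch}(a), the hypotheses $\sig\in S^{(n,0)}$ and $\tau\in S^{(m,0)}$ say exactly that every homogeneous component of $A$ in degrees $1,\dots,n-1$ vanishes, and likewise for $B$ in degrees $1,\dots,m-1$. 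Hence the lowest-degree part of $A$ is its degree-$n$ component $a_\sig=\sum_{|u|=n}\eps_u(\sig)\,u$, with all remaining terms of $A$ of degree $>n$, and similarly $B$ has lowest part $a_\tau=\sum_{|v|=m}\eps_v(\tau)\,v$, with all other terms of degree $>m$.

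Next, since $\Lam$ is a homomorphism, I would use $\Lam([\sig,\tau])=\Lam(\sig)\inv\Lam(\tau)\inv\Lam(\sig)\Lam(\tau)=(vu)\inv(uv)$, where $u=1+A$ and $v=1+B$. The key algebraic identity is the elementary $uv-vu=AB-BA$, which rewrites the commutator as
\[
\Lam([\sig,\tau])=(vu)\inv\bigl(vu+(AB-BA)\bigr)=1+(vu)\inv(AB-BA).
\]
Because $A$ has degree $\geq n$ and $B$ has degree $\geq m$, both $AB$ and $BA$ lie in degrees $\geq n+m$, so $AB-BA$ does as well, and its degree-$(n+m)$ component is precisely $a_\sig a_\tau-a_\tau a_\sig$. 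Meanwhile $(vu)\inv=1+(\text{terms of degree}\geq1)$. Multiplying, the only contribution to degree exactly $n+m$ comes from pairing the constant term $1$ of $(vu)\inv$ with the degree-$(n+m)$ part of $AB-BA$, since every other pairing raises the degree past $n+m$. Thus the degree-$(n+m)$ component of $\Lam([\sig,\tau])$ equals $a_\sig a_\tau-a_\tau a_\sig$.

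It then remains to read off the coefficient of $w$. In $a_\sig a_\tau=\sum_{|u|=n,\,|v|=m}\eps_u(\sig)\eps_v(\tau)\,uv$, the word $w$ of length $n+m$ arises from a single pair, namely the unique factorization $w=u_1u_2$ with $|u_1|=n$, contributing $\eps_{u_1}(\sig)\eps_{u_2}(\tau)$. Symmetrically, the coefficient of $w$ in $a_\tau a_\sig$ comes from the unique factorization $w=u_2'u_1'$ with $|u_2'|=m$, giving $\eps_{u_2'}(\tau)\eps_{u_1'}(\sig)$. Subtracting yields the asserted formula.

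The computation is essentially formal, so there is no serious difficulty; the one point requiring care—and hence the main (mild) obstacle—is the degree bookkeeping in the previous paragraph, that is, confirming that the correction factor $(vu)\inv$ contributes nothing beyond its constant term to the degree-$(n+m)$ part. This hinges precisely on $A$ and $B$ having strictly positive lowest degrees $n$ and $m$, which is exactly the input supplied by Proposition \ref{Koch}(a); the profinite setting causes no trouble, as all manipulations take place in the (continuous) ring $\dbZ_p\langle\langle X\rangle\rangle$.
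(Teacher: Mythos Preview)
Your argument is correct and follows essentially the same route as the paper: write $\Lam(\sig)=1+P+O(n+1)$ and $\Lam(\tau)=1+Q+O(m+1)$ via Proposition~\ref{Koch}(a), identify the degree-$(n+m)$ part of $\Lam([\sig,\tau])$ as $PQ-QP$, and read off the coefficient of $w$. The only difference is that the paper simply cites the identity $\Lam([\sig,\tau])=1+PQ-QP+O(n+m+1)$ from the literature, whereas you supply the short derivation via $uv-vu=AB-BA$ and the degree count for $(vu)^{-1}$.
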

\begin{proof}
By Proposition \ref{Koch}(a), we may write $\Lam_{\dbZ_p}(\sig)=1+P+O(n+1)$ and $\Lam_{\dbZ_p}(\tau)=1+Q+O(m+1)$, where $P,Q\in\dbZ_p\langle\langle X\rangle\rangle$ are homogenous of degrees $n,m$, respectively, and where $O(r)$ denotes a power series containing only terms of degree $\geq r$.
Then
\[
\Lam_{\dbZ_p}([\sig,\tau])=1+PQ-QP+O(n+m+1).
\]
(compare e.g., \cite{Morishita12}*{Proof of Prop.\ 8.5}).
By (\ref{eps of product}), it follows that
\[
\begin{split}
\eps_{w,\dbZ_p}([\sig,\tau])&=(PQ-QP)_w=P_{u_1}Q_{u_2}-Q_{u'_2}P_{u'_1}\\
&=\eps_{u_1,\dbZ_p}(\sig)\eps_{u_2,\dbZ_p}(\tau)-\eps_{u'_2,\dbZ_p}(\tau)\eps_{u'_1,\dbZ_p}(\sig).
\end{split}
\]
\end{proof}

The commutator map induces on the graded ring $\bigoplus_{n=1}^\infty S^{(n,0)}/S^{(n+1,0)}$ a graded Lie algebra structure \cite{SerreLie}*{Part I, Ch.\ II, Prop.\ 2.3}.
Let $\grd$ be the ideal in the $\dbZ_p$-algebra $\dbZ_p\langle\langle X\rangle\rangle$ generated by $X$.
Then $\bigoplus_{n=1}^\infty\grd^n/\grd^{n+1}$ is a Lie algebra with the Lie brackets defined on homogenous components by $[\bar f,\bar g]=\overline{fg-gf}$ for $f\in\grd^n$, $g\in \grd^m$ \cite{SerreLie}*{p.\ 25}.
By Proposition \ref{Koch}(a), $\Lam_{\dbZ_p}$ induces a graded $\dbZ_p$-algebra homomorphism
\[
\gr\ \Lam_{\dbZ_p}\colon \bigoplus_{n=1}^\infty S^{(n,0)}/S^{(n+1,0)}\to\bigoplus_{n=1}^\infty\grd^n/\grd^{n+1},
\quad
\sig S^{(n+1,0)}\mapsto \sum_{|w|=n}\eps_{w,\dbZ_p}(\sig)+\grd^{n+1}.
\]
Then Lemma \ref{commutators} means that $\gr\ \Lam_{\dbZ_p}$ is a Lie algebra homomorphism.

For  $w\in \Lyn(X)$ we inductively define an element  $\tau_w$ of $S$ and  a non-commutative polynomial $P_w\in \dbZ\langle X\rangle\subseteq\dbZ_p\langle X\rangle$ as follows:
\begin{itemize}
\item
If $w=(x)$ has length $1$, then $\tau_w=x$ and $P_w=x$;
\item
If $|w|>1$, then we take the standard factorization $w=w'w''$ of $w$ with respect to the Hall set $\Lyn(X)$ (see \S\ref{section on words}), and set
\[
\tau_w=[\tau_{w'},\tau_{w''}], \qquad P_w=P_{w'}P_{w''}-P_{w''}P_{w'}.
\]
\end{itemize}

For $w\in\Lyn_n(X)$ one has $\tau_w\in S^{(n,0)}$.
Moreover:

\begin{prop}
\label{the taus generate}
Let $n\geq1$.
The cosets of $\tau_w$, with $w\in \Lyn_n(X)$, generate $S^{(n,0)}/S^{(n+1,0)}$.
\end{prop}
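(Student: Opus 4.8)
The plan is to transport the classical fact that the Lyndon (Hall) brackets form a basis of the free Lie algebra across the graded Lie-algebra structure on $\gr S=\bigoplus_{m\geq1}S^{(m,0)}/S^{(m+1,0)}$. Two features make this possible and are already available: the recursions defining $\tau_w$ and $P_w$ have the \emph{same} shape, each replacing the standard factorization $w=w'w''$ by a Lie bracket, so $\tau_w$ is precisely the image in $\gr S$ of the Hall basis element attached to $w$; and $\gr\Lam_{\dbZ_p}$ is a graded Lie-algebra homomorphism, which realizes this image concretely inside $\bigoplus_m\grd^m/\grd^{m+1}$.

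First I would record, by induction on $|w|$, the identity
\[
(\gr\Lam_{\dbZ_p})\bigl(\tau_wS^{(n+1,0)}\bigr)=P_w+\grd^{n+1}\qquad(w\in\Lyn_n(X)).
\]
The base case $|w|=1$ is immediate from $\tau_{(x)}=x$ and $P_{(x)}=x$. For the inductive step I take the standard factorization $w=w'w''$ and apply the bracket rule $[\bar f,\bar g]=\overline{fg-gf}$ defining the Lie structure on the target: since $\gr\Lam_{\dbZ_p}$ is a Lie homomorphism, $\tau_w=[\tau_{w'},\tau_{w''}]$ is sent to $P_{w'}P_{w''}-P_{w''}P_{w'}=P_w$. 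This also re-proves $\tau_w\in S^{(n,0)}$ for $w\in\Lyn_n(X)$.

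Next I would use that $\gr S$ is generated, as a Lie algebra, by its degree-one part. Indeed $S^{(m+1,0)}=[S^{(m,0)},S]$ gives $\gr_{m+1}S=[\gr_mS,\gr_1S]$, so by induction every graded component lies in the closed Lie subalgebra generated by $\gr_1S=S/S^{(2,0)}$, which is in turn topologically generated by the images of $X$. The universal property of the free Lie algebra $\Lie(X)$ then supplies a surjective graded Lie-algebra homomorphism $\Lie(X)\twoheadrightarrow\gr S$ carrying each $x\in X$ to its class; by the parallel recursions (and the identity above) it carries the Hall basis element $P_w\in\Lie_n(X)$ to $\tau_wS^{(n+1,0)}$. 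The classical Hall basis theorem (\cite{Reutenauer93}*{Th.~4.9}) asserts that the $P_w$, $w\in\Lyn_n(X)$, form a basis of $\Lie_n(X)$, hence span it; applying the surjection in degree $n$ shows that their images $\tau_wS^{(n+1,0)}$ generate $S^{(n,0)}/S^{(n+1,0)}$, as required.

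The step needing the most care is the passage from this \emph{algebraic} spanning statement to \emph{topological} generation of the profinite group $S^{(n,0)}/S^{(n+1,0)}$, in particular when $X$ is infinite. I would handle it either by reducing to finite $X$ through the presentation $S=\invlim S_Y$ (over finite $Y\subseteq X$) together with the compatibility of the $\tau_w$ under the transition maps, or by a density-and-compactness argument, noting that for finite $X$ the finitely many classes $\tau_wS^{(n+1,0)}$ already span algebraically. Finally, one can upgrade ``generate'' to ``form a basis'': Proposition \ref{Koch}(a) forces $\gr\Lam_{\dbZ_p}$ to be injective in each degree, so the identity above together with the linear independence of the $P_w$ in $\grd^n/\grd^{n+1}$ shows the $\tau_wS^{(n+1,0)}$ are independent, recovering the Witt count $|\Lyn_n(X)|=\varphi_n(|X|)$ of (\ref{size of Lyn}) as the rank.
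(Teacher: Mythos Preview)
Your proposal is correct and takes essentially the same route as the paper, whose proof is just a citation of \cite{Reutenauer93}*{Cor.~6.16} for the discrete free group followed by ``taking closure''; you unpack this via the Hall basis of $\Lie(X)$ together with an explicit density argument for the map $\Lie(X)\to\gr S$. The preliminary identity $(\gr\Lam_{\dbZ_p})(\tau_w)\equiv P_w$ and the final upgrade from ``generate'' to ``basis'' are not needed for the proposition as stated---the former reappears later in the paper as Proposition~\ref{Lam(tau w)}(a), and the map $\Lie(X)\to\gr S$ sends $P_w$ to $\tau_wS^{(n+1,0)}$ directly from the parallel recursions, without invoking $\gr\Lam_{\dbZ_p}$.
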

\begin{proof}
See \cite{Reutenauer93}*{Cor.\ 6.16} for the discrete version.
The profinite version follows by taking closure.
\end{proof}

Let $\leq_{\rm alp}$ and $\preceq$ be the total orders on $X^*$ defined in \S\ref{section on words}.
The importance of the Lyndon words in our context, beside forming a Hall set, is part (b)
 of the following Proposition, called the {\bf triangularity} property.

\begin{prop}
\label{Lam(tau w)}
Let $w\in\Lyn(X)$.
Then
\begin{enumerate}
\item[(a)]
$\Lam_{\dbZ_p}(\tau_w)-1-P_w$ is a  combination of words of length $>|w|$.
\item[(b)]
$P_w-w$ is  a combination of words of length $|w|$  which are strictly larger than $w$ with respect to $\leq_{\rm alp}$.
\item[(c)]
$\Lam_{\dbZ_p}(\tau_w)-1-w$ is a combination of words which are strictly larger than $w$ in $\preceq$.
\end{enumerate}
\end{prop}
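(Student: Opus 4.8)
The plan is to prove (a) and (b) independently by induction on $|w|$, and then derive (c) by combining them through the definition of the order $\preceq$. Throughout I use the standard factorization $w=w'w''$ of a Lyndon word $w$ of length $>1$, together with the two basic facts that $w'$ and $w''$ are themselves Lyndon words and that $w'<_{\rm alp}w''$.

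For part (a), the base case $w=(x)$ is immediate since $\Lam_{\dbZ_p}(x)=1+x=1+P_{(x)}$. For the inductive step, since $\tau_{w'}\in S^{(|w'|,0)}$ and $\tau_{w''}\in S^{(|w''|,0)}$, I would invoke the computation in the proof of Lemma \ref{commutators}: if $\Lam_{\dbZ_p}(\sig)=1+P+O(n+1)$ and $\Lam_{\dbZ_p}(\tau)=1+Q+O(m+1)$ with $P,Q$ homogeneous of degrees $n,m$, then $\Lam_{\dbZ_p}([\sig,\tau])=1+(PQ-QP)+O(n+m+1)$. Applying this with $\sig=\tau_{w'}$, $\tau=\tau_{w''}$ and the induction hypothesis $\Lam_{\dbZ_p}(\tau_{w'})=1+P_{w'}+O(|w'|+1)$, $\Lam_{\dbZ_p}(\tau_{w''})=1+P_{w''}+O(|w''|+1)$ yields
\[
\Lam_{\dbZ_p}(\tau_w)=1+(P_{w'}P_{w''}-P_{w''}P_{w'})+O(|w|+1)=1+P_w+O(|w|+1),
\]
which is exactly (a). An easy parallel induction shows each $P_w$ is homogeneous of degree $|w|$, a fact I will use in (b).

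Part (b) is the combinatorial heart of the statement, and the one I expect to be the main obstacle. The crucial auxiliary fact is that $w=w'w''<_{\rm alp}w''w'$: since $w$ is Lyndon, $w<_{\rm alp}w''$; moreover $w''$ cannot be a prefix of $w=w'w''$ (otherwise $w''\le_{\rm alp}w$, contradicting $w<_{\rm alp}w''$), so $w'w''$ and $w''$ first disagree at some position $i\le|w''|$ where $w'w''$ carries the smaller letter, and since $w''w'$ agrees with $w''$ through position $i$ this forces $w'w''<_{\rm alp}w''w'$. Granting this, I argue as follows. By induction every word occurring in $P_{w'}$ (resp.\ $P_{w''}$) has length $|w'|$ (resp.\ $|w''|$) and is $\ge_{\rm alp}w'$ (resp.\ $\ge_{\rm alp}w''$), with $w'$ (resp.\ $w''$) occurring with coefficient $1$. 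A direct comparison then shows that in $P_{w'}P_{w''}$ the word $w=w'w''$ occurs exactly once and every other word has length $|w|$ and is $>_{\rm alp}w$, while in $P_{w''}P_{w'}$ every word has length $|w|$ and is $>_{\rm alp}w$ (using $w''w'>_{\rm alp}w$ from the auxiliary fact). Subtracting gives $P_w-w$ as a combination of length-$|w|$ words strictly larger than $w$ in $\leq_{\rm alp}$.

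Finally, part (c) follows by writing $\Lam_{\dbZ_p}(\tau_w)-1-w=\bigl(\Lam_{\dbZ_p}(\tau_w)-1-P_w\bigr)+(P_w-w)$. By (a) the first summand involves only words of length $>|w|$, hence words $\succ w$; by (b) the second involves only length-$|w|$ words that are $>_{\rm alp}w$, again $\succ w$. Since $\preceq$ orders first by length and then by $\leq_{\rm alp}$, both contributions consist of words strictly larger than $w$ in $\preceq$, which is (c).
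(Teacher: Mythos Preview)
Your proof is correct and, for parts (a) and (c), matches the paper's argument essentially verbatim: the paper proves (a) by noting that $\gr\,\Lam_{\dbZ_p}$ is a Lie algebra homomorphism (which is precisely the content of the commutator computation you invoke from Lemma~\ref{commutators}) and derives (c) from (a) and (b) exactly as you do.

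The one genuine difference is in (b). The paper simply cites Reutenauer's Theorem~5.1 for the triangularity property, whereas you supply a self-contained inductive proof. Your argument is the standard one underlying that reference: using homogeneity of $P_{w'}$ and $P_{w''}$, the fixed-length comparison ``$|u|=|w'|$, $u\ge_{\rm alp}w'$, $v\ge_{\rm alp}w''$ $\Rightarrow$ $uv\ge_{\rm alp}w'w''$'' is straightforward, and your auxiliary fact $w'w''<_{\rm alp}w''w'$ (derived from $w<_{\rm alp}w''$ and the impossibility of $w''$ being a prefix of $w$) handles the $P_{w''}P_{w'}$ term. What you gain is that the proof becomes independent of the external reference; what the paper gains by citing Reutenauer is brevity and a pointer to the broader Hall-set context in which this triangularity holds. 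Both routes are equally valid here.
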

\begin{proof}
(a) \quad
Since $\gr\,\Lam_{\dbZ_p}$ is a Lie algebra homomorphism, for $w\in\Lyn_n(X)$ we have by induction
\[
(\gr\,\Lam_{\dbZ_p})(\tau_w S^{(n+1,0)})=P_w+\grd^{n+1},
\]
and the assertion follows.
See also \cite{Reutenauer93}*{Lemma 6.10(ii)}.

\medskip

(b) \quad
See \cite{Reutenauer93}*{Th.\ 5.1} and its proof.

\medskip

(c) \quad
This follows from (a) and (b).
\end{proof}

\section{Generators for $S^{(n,p)}/S^{(n+1,p)}$}
Let $\pi$ be an indeterminate over the ring $\dbZ/p$ and let $\dbZ/p[\pi]$ be the polynomial ring.
Let $A_\bullet=\bigoplus_{n=1}^\infty A_n$ be a graded Lie $\dbZ/p$-algebra with Lie bracket $[\cdot,\cdot]$.
Suppose that there is a map
$\dbZ/p[\pi]\times A_\bullet\to A_\bullet, \quad (\alp,\xi)\mapsto \alp\xi$,
which is $\dbZ/p$-linear in $\dbZ/p[\pi]$, such that $\pi\xi\in A_{s+1}$ for $\xi\in A_s$,  and such that for every $\xi_1,\xi_2\in A_s$,
\begin{equation}
\label{properties of pi}
\pi(\xi_1+\xi_2)=\begin{cases}
\pi\xi_1+\pi\xi_2, & \hbox{ if } p>2 \hbox{ or } s>1,\\
\pi\xi_1+\pi\xi_2+[\xi_1,\xi_2], & \hbox{ if }p=2,\ s=1.
\end{cases}
\end{equation}
By induction, this extends to:

\begin{lem}
\label{ZpPi algebras}
Let $r,k,s\geq1$  and let $\xi_1\nek\xi_k\in A_s$.
Then
\[
\pi^r(\sum_{i=1}^k\xi_i)=
\begin{cases}
\sum_{i=1}^k\pi^r\xi_i,& \hbox{ if }p>2 \hbox{ or } s>1,\\
\sum_{i=1}^k\pi^r\xi_i+\sum_{i<j}\pi^{r-1}[\xi_i,\xi_j],&\hbox{ if }p=2,\ s=1.
\end{cases}
\]
\end{lem}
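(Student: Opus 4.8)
The plan is to reduce everything to the defining relation (\ref{properties of pi}) by a careful bookkeeping of degrees, exploiting the fact that the anomalous bracket term appears only when $\pi$ is applied to an element of $A_1$ and $p=2$. The crucial observation is that $\pi\xi\in A_{s+1}$, so as soon as $\pi$ has been applied once to an element of $A_1$, all subsequent applications act on components $A_j$ with $j\geq2$, where (\ref{properties of pi}) asserts that $\pi$ is additive.

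First I would dispose of the case $p>2$ or $s>1$. When $p>2$, relation (\ref{properties of pi}) says that $\pi$ is additive on every $A_j$; when $s>1$, it is additive on $A_s,A_{s+1}\nek$, which are the only components that $\pi^r$ traverses starting from $A_s$. In either case $\pi^r$ is additive on $A_s$, and the formula $\pi^r(\sum_i\xi_i)=\sum_i\pi^r\xi_i$ follows by a trivial induction on $k$ from the two-summand case.

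The substance is the case $p=2$, $s=1$, which I would prove by a double induction. The inner induction is on $r$, establishing the two-summand identity
\[
\pi^r(\xi_1+\xi_2)=\pi^r\xi_1+\pi^r\xi_2+\pi^{r-1}[\xi_1,\xi_2].
\]
For $r=1$ this is exactly (\ref{properties of pi}). For the step I apply $\pi$ to the identity for $r-1$; the three resulting summands $\pi^{r-1}\xi_1,\pi^{r-1}\xi_2\in A_r$ and $\pi^{r-2}[\xi_1,\xi_2]\in A_r$ all lie in $A_r$ with $r\geq2$, so $\pi$ distributes over their sum and yields the claim. The outer induction is on $k$: writing $\eta=\sum_{i=1}^{k-1}\xi_i\in A_1$ and applying the two-summand identity to $\eta+\xi_k$, I expand $\pi^r\eta$ by the inductive hypothesis and use bilinearity of the bracket to write $[\eta,\xi_k]=\sum_{i<k}[\xi_i,\xi_k]$. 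Since each $[\xi_i,\xi_k]\in A_2$, the map $\pi^{r-1}$ is additive on these, so $\pi^{r-1}[\eta,\xi_k]=\sum_{i<k}\pi^{r-1}[\xi_i,\xi_k]$; merging the off-diagonal bracket sums then produces $\sum_{i<j\leq k}\pi^{r-1}[\xi_i,\xi_j]$, as required.

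The main point to watch---rather than a genuine obstacle---is the degree bookkeeping: one must check at each stage that the elements to which $\pi$ (or $\pi^{r-1}$) is applied lie in $A_j$ with $j\geq2$, so that additivity is available and no further bracket corrections are generated. This is automatic because the $\xi_i$ start in $A_1$ and every bracket or application of $\pi$ strictly raises the degree, so the anomalous term in (\ref{properties of pi}) can be triggered at most once, accounting for exactly the single factor $\pi^{r-1}[\xi_i,\xi_j]$ attached to each pair.
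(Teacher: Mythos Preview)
Your argument is correct and is precisely the induction the paper has in mind; the paper itself gives no details beyond the phrase ``By induction, this extends to'', and your double induction on $r$ and $k$, with the degree bookkeeping ensuring that the anomalous bracket term in (\ref{properties of pi}) fires at most once, is the natural way to carry it out.
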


We write $\langle T\rangle$ for the submodule of $A_\bullet$ generated by a subset $T$.

\begin{lem}
\label{cor on gens}
Let $n\geq2$ and for each $1\leq s\leq n$ let $T_s$ be a subset of $A_s$.
When $p=2$ assume also that $[\tau_1,\tau_2]\in T_2\cup\{0\}$  for every $\tau_1,\tau_2\in T_1$.
If the sets $\pi^{n-s}\langle T_s\rangle$,  $s=1,2\nek n$, generate $A_n$, then the sets $\pi^{n-s}T_s$, $s=1,2\nek n$, also generate $A_n$.
\end{lem}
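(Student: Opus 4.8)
The plan is to prove the contrapositive-free statement directly, by showing that each set $\pi^{n-s}\langle T_s\rangle$ is already contained in the submodule generated by the \emph{smaller} collection $\bigcup_{s'=1}^n\pi^{n-s'}T_{s'}$. Once this inclusion is established for every $s$, it follows that
\[
A_n=\Bigl\langle\textstyle\bigcup_{s=1}^n\pi^{n-s}\langle T_s\rangle\Bigr\rangle\subseteq\Bigl\langle\textstyle\bigcup_{s'=1}^n\pi^{n-s'}T_{s'}\Bigr\rangle\subseteq A_n,
\]
where the first equality is the hypothesis and the last inclusion is trivial since each $\pi^{n-s'}T_{s'}\subseteq A_n$; hence the $\pi^{n-s}T_s$ generate $A_n$, as desired. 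So the entire content is the single inclusion $\pi^{n-s}\langle T_s\rangle\subseteq\langle\bigcup_{s'}\pi^{n-s'}T_{s'}\rangle$.

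To prove it, fix $s$ and take an arbitrary element $\pi^{n-s}\xi$ with $\xi\in\langle T_s\rangle$. Writing $\xi$ as a $\dbZ/p$-combination of elements of $T_s$ and expanding each coefficient into a sum of ones, I may present $\xi=\sum_{i=1}^k\tau_i$ with all $\tau_i\in T_s$. In the generic case $p>2$ or $s>1$, the additive clause of Lemma \ref{ZpPi algebras} gives $\pi^{n-s}\xi=\sum_{i=1}^k\pi^{n-s}\tau_i$, a $\dbZ/p$-combination of elements of $\pi^{n-s}T_s$, which settles the inclusion at once.

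The only delicate case, and the main obstacle, is $p=2$ with $s=1$, where the $\pi$-action is not additive. Here over $\dbZ/2$ I may take the $\tau_i$ distinct, and the second clause of Lemma \ref{ZpPi algebras} produces correction terms:
\[
\pi^{n-1}\xi=\sum_{i=1}^k\pi^{n-1}\tau_i+\sum_{i<j}\pi^{n-2}[\tau_i,\tau_j].
\]
The first sum lies in $\langle\pi^{n-1}T_1\rangle$. For each correction term, the standing hypothesis for $p=2$ guarantees $[\tau_i,\tau_j]\in T_2\cup\{0\}$, so that $\pi^{n-2}[\tau_i,\tau_j]$ (well defined since $n\geq2$, with $\pi^{0}=\id$ when $n=2$) lies in $\pi^{n-2}T_2\cup\{0\}$. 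Thus $\pi^{n-1}\xi$ again lies in $\langle\bigcup_{s'}\pi^{n-s'}T_{s'}\rangle$, completing the inclusion. The extra bracket hypothesis is precisely what allows these non-additivity correction terms to be reabsorbed into the generating sets, so no further work is needed.
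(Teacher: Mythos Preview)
Your proof is correct and follows essentially the same approach as the paper's: both argue via Lemma~\ref{ZpPi algebras} that $\pi^{n-s}\langle T_s\rangle\subseteq\langle\pi^{n-s}T_s\rangle$ in the generic case, and that $\pi^{n-1}\langle T_1\rangle\subseteq\langle\pi^{n-1}T_1\rangle+\langle\pi^{n-2}T_2\rangle$ in the exceptional case $p=2$, $s=1$, using the bracket hypothesis to absorb the correction terms. Your write-up simply unpacks a few more details (expanding $\xi$ explicitly as a sum of generators, noting the $\pi^0=\id$ convention when $n=2$), but the argument is the same.
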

\begin{proof}
When $p>2$ or $s>1$ Lemma \ref{ZpPi algebras} shows that $\pi^{n-s}\langle T_s\rangle=\langle\pi^{n-s}T_s\rangle$.
When $p=2$ and $s=1$, it shows that
\[
\pi^{n-1}\langle T_1\rangle\subseteq \langle\pi^{n-1}T_1\rangle+\langle\pi^{n-2}T_2\rangle\subseteq A_n.
\]
Therefore the subgroup of $A_n$ generated by the sets $\pi^{n-s}T_s$, $s=1,2\nek n$, contains the sets $\pi^{n-s}\langle T_s\rangle$, $s=1,2\nek n$, and hence equals $A_n$.
\end{proof}

Motivated by e.g., \cite{Lazard54}, \cite{SerreDemuskin},  \cite{Labute67}, we now specialize to a graded Lie algebra defined using the lower $p$-central filtration.
We refer to \cite{NeukirchSchmidtWingberg}*{Ch.\ III, \S8} for the following facts.
For the free profinite group $S$ on the basis $X$ and for $n\geq1$ we set $\gr_n(S)=S^{(n,p)}/S^{(n+1,p)}$.
It is an elementary abelian $p$-group, which we write additively.
The commutator map induces a map $[\cdot,\cdot]\colon \gr_n(S)\times\gr_m(S)\to\gr_{n+m}(S)$, which
endows a graded Lie algebra structure on $\gr_\bullet(S)=\bigoplus_{n=1}^\infty\gr_n(S)$.
The map $\tau\mapsto \tau^p$  maps $S^{(r,p)}$ into $S^{(r+1,p)}$, and induces  a map $\pi_r\colon\gr_r(S)\to\gr_{r+1}(S)$.
The map $(\pi^r,\xi)\mapsto \pi_r(\xi)$ for $\xi\in\gr_r(S)$ extends to a map
$\dbZ/p[\pi]\times\gr_\bullet(S)\to\gr_\bullet(S)$ which is $\dbZ/p$-linear in the first component and which satisfies (\ref{properties of pi}).

\begin{thm}
\label{generators}
Let $n\geq1$.
The cosets of $\tau_w^{p^{n-s}}$, with $1\leq s\leq n$ and $w\in \Lyn_s(X)$, generate $S^{(n,p)}/S^{(n+1,p)}$.
\end{thm}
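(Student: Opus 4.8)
The plan is to deduce the statement from Lemma \ref{cor on gens}, applied to the graded Lie algebra $A_\bullet=\gr_\bullet(S)$ with $T_s=\{\tau_wS^{(s+1,p)}:w\in\Lyn_s(X)\}\subseteq\gr_s(S)$ for $1\le s\le n$. First I would dispose of the extra hypothesis needed when $p=2$: since $\Lyn_1(X)=X$ and the length-two Lyndon words are the $(xy)$ with $x<y$, for which $\tau_{(xy)}=[x,y]$, any bracket of two elements of $T_1$ equals $\pm\tau_{(xy)}S^{(3,p)}$ or $0$, and over $\dbF_p$ with $p=2$ the sign is irrelevant; hence $[\tau_1,\tau_2]\in T_2\cup\{0\}$ for $\tau_1,\tau_2\in T_1$. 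By Lemma \ref{cor on gens} it then suffices to show that the subgroups $\pi^{n-s}\langle T_s\rangle$, $1\le s\le n$, generate $\gr_n(S)$.

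The next step identifies $\langle T_s\rangle$. The inclusions $S^{(s,0)}\subseteq S^{(s,p)}$ and $S^{(s+1,0)}\subseteq S^{(s+1,p)}$ induce a homomorphism of graded Lie algebras $\bigoplus_sS^{(s,0)}/S^{(s+1,0)}\to\gr_\bullet(S)$; write $\bar L_s$ for its image in degree $s$. By Proposition \ref{the taus generate} the cosets of $\tau_w$, $w\in\Lyn_s(X)$, generate $S^{(s,0)}/S^{(s+1,0)}$, so their images generate $\bar L_s$, i.e.\ $\langle T_s\rangle=\bar L_s$. In particular $\bar L_1=\gr_1(S)$, and since the map respects brackets, $[\bar L_1,\bar L_s]\subseteq\bar L_{s+1}$. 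Thus everything reduces to the structural identity
\[
\gr_n(S)=\sum_{s=1}^n\pi^{n-s}\bar L_s,\qquad\text{equivalently}\qquad S^{(n,p)}=\Bigl(\prod_{s=1}^n(S^{(s,0)})^{p^{n-s}}\Bigr)S^{(n+1,p)}.
\]

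I would prove this identity by induction on $n$, the case $n=1$ being $\gr_1(S)=\bar L_1$. Reducing the defining relation $S^{(n,p)}=(S^{(n-1,p)})^p[S,S^{(n-1,p)}]$ modulo $S^{(n+1,p)}$ gives
\[
\gr_n(S)=\pi\,\gr_{n-1}(S)+[\gr_1(S),\gr_{n-1}(S)].
\]
Substituting the inductive hypothesis $\gr_{n-1}(S)=\sum_s\pi^{n-1-s}\bar L_s$, the first summand becomes $\sum_s\pi^{n-s}\bar L_s$ by the additivity of $\pi$ (Lemma \ref{ZpPi algebras}), while the second becomes $\sum_s[\bar L_1,\pi^{n-1-s}\bar L_s]$. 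The crux --- and the step I expect to be the main obstacle --- is the $\dbZ/p[\pi]$-bilinearity of the bracket, $[\xi,\pi\eta]=\pi[\xi,\eta]$ for $\xi\in\gr_1(S)$, which is the graded shadow of the group congruence $[\sigma,\tau^p]\equiv[\sigma,\tau]^p$ modulo the next filtration term. This I would extract from the Hall--Petrescu collection formula $[\sigma,\tau^p]\equiv\prod_{i\ge1}[\sigma,\tau,\dots,\tau]^{\binom pi}$ (with $i$ copies of $\tau$): the intermediate factors carry coefficients $\binom pi\equiv0\pmod p$ for $1<i<p$, and for $p$ odd all factors other than $[\sigma,\tau]^p$ land in strictly deeper filtration, so only $\pi[\xi,\eta]$ survives; for $p=2$ one extra iterated bracket of elements of $\bar L_1$ appears, which again lies in $\sum_s\pi^{n-s}\bar L_s$ (this is precisely the exceptional behaviour already recorded in \eqref{properties of pi} and in Lemma \ref{cor on gens}). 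Granting the bilinearity, $[\bar L_1,\pi^{n-1-s}\bar L_s]=\pi^{n-1-s}[\bar L_1,\bar L_s]\subseteq\pi^{n-1-s}\bar L_{s+1}$, so both summands lie in $\sum_{s=1}^n\pi^{n-s}\bar L_s$; the reverse inclusion is immediate, completing the induction and hence, via Lemma \ref{cor on gens}, the proof.
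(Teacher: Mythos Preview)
Your overall strategy coincides with the paper's: both apply Lemma~\ref{cor on gens} with the same sets $T_s$, verify the $p=2$ hypothesis in the same way (noting that for $p=2$ the cosets of $[x,y]$ and $[y,x]$ agree in $\gr_2(S)$), and invoke Proposition~\ref{the taus generate} to identify $\langle T_s\rangle$ with the image $\bar L_s$ of $S^{(s,0)}$ in $\gr_s(S)$. The only substantive difference is how you establish that the $\pi^{n-s}\langle T_s\rangle$ generate $\gr_n(S)$. The paper simply quotes the identity
\[
S^{(n,p)}=\prod_{s=1}^n(S^{(s,0)})^{p^{n-s}}
\]
from \cite{NeukirchSchmidtWingberg}*{Prop.~3.8.6} and reads off the generation statement directly. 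You instead re-derive its graded consequence by induction on $n$, using the recursion $\gr_n(S)=\pi\,\gr_{n-1}(S)+[\gr_1(S),\gr_{n-1}(S)]$ together with the $\dbF_p[\pi]$-bilinearity $[\xi,\pi\eta]=\pi[\xi,\eta]$ of the bracket.

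Your route is self-contained but longer, and the bilinearity you need is itself part of the standard structure theory of $\gr_\bullet(S)$ recorded in \cite{NeukirchSchmidtWingberg}*{Ch.~III, \S8}; your Hall--Petrescu sketch is thus reproving known material rather than adding a genuinely new ingredient. The argument is correct, but the $p=2$ exception deserves to be written out: the formula you quote is not literally Hall--Petrescu (which concerns $(\sigma\tau)^p$), and when $\eta\in\gr_1(S)$ and $p=2$ the surviving correction is $[[\xi,\eta],\eta]\in\bar L_3$, so that after the remaining powers of $\pi$ one obtains $\pi^{n-2}[\xi,\eta]+\pi^{n-3}[[\xi,\eta],\eta]\in\pi^{n-2}\bar L_2+\pi^{n-3}\bar L_3$, which is indeed contained in $\sum_t\pi^{n-t}\bar L_t$. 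With that made explicit your induction closes. Citing \cite{NeukirchSchmidtWingberg}*{Prop.~3.8.6} as the paper does buys brevity; your approach buys independence from that reference.
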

\begin{proof}
The case $n=1$ is immediate, so we assume that $n\geq2$.
For every $1\leq s\leq n$ let $T_s$ be the set of cosets of $\tau_w$, $w\in\Lyn_s(X)$, in $\gr_s(S)$.
By Proposition \ref{the taus generate}, $\langle T_s\rangle$ is the image of $S^{(s,0)}$ in $\gr_s(S)$.
Hence $\pi^{n-s}\langle T_s\rangle$ consists of the cosets in $\gr_n(S)$ of the $p^{n-s}$-powers of $S^{(s,0)}$.
One has
\[
S^{(n,p)}=\prod_{s=1}^n(S^{(s,0)})^{p^{n-s}}
\]
\cite{NeukirchSchmidtWingberg}*{Prop.\ 3.8.6}.
Thus the sets $\pi^{n-s}\langle T_s\rangle$, $s=1,2\nek n$, generate $\gr_n(S)$.

Further, $T_1$ consists of the cosets of $x$, with $x\in X$, and $T_2$ consists of the cosets of commutators $[x,y]$ with $x<y$ in $X$.
Moreover, when $p=2$  the cosets of $[x,y]$ and $[y,x]=[x,y]\inv$ in $\gr_2(S)$ coincide.
Lemma \ref{cor on gens} therefore implies that even the sets $\pi^{n-s}T_s$, $s=1,2\nek n$, generate $\gr_n(S)$, as required.
\end{proof}

\section{The pairing $\langle w,w'\rangle_n$}
\label{section on pairings}
For a commutative unitary ring $R$ and a positive integer $m$, let $\dbU_m(R)$ be the group of all $m\times m$ upper-triangular unipotent matrices over $R$.
We write $I_m$ for the identity matrix in $\dbU_m(R)$, and $E_{ij}$ for the matrix with $1$ at entry $(i,j)$ and $0$ elsewhere.
As above, $X$ will be a totally ordered set.

For the following fact we refer, e.g., to \cite{Efrat14}*{Lemma 7.5}.
We recall from \S\ref{section on power series} that $\eps_{u,R}(\sig)$ is the coefficient of the word $u\in X^*$ in  the formal power series $\Lam_R(\sig)\in R\langle\langle X\rangle\rangle^\times$.

\begin{prop}
\label{rho}
Given a profinite ring $R$ and a word $w=(x_1\cdots x_s)$ in $X^*$ there is a well defined homomorphism of profinite groups
\[
\rho^w_R\colon S\to \dbU_{s+1}(R), \quad \sig\mapsto (\eps_{(x_i\cdots x_{j-1}),R}(\sig))_{1\leq i<j\leq s+1}.
\]
\end{prop}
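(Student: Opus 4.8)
The plan is to verify three things: that the stated assignment really lands in $\dbU_{s+1}(R)$, that it respects multiplication, and that it is continuous. The organizing observation I would use is that the matrix entries are the coefficients of the \emph{interval subwords} $v_{ij}:=(x_i\cdots x_{j-1})$ of the fixed word $w=(x_1\cdots x_s)$, and that the two-part concatenation factorizations of such an interval subword are indexed precisely by an intermediate cut point.

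For unipotence, I would first extend the formula to the diagonal by allowing $j=i$, in which case $v_{ii}$ is the empty word; since $\eps_{\emptyset,R}(\sig)=1$ by the construction of $\Lam_R$, every diagonal entry is $1$. Declaring the entries below the diagonal to be $0$, the image is an upper-triangular unipotent $(s+1)\times(s+1)$ matrix over $R$, so $\rho^w_R(\sig)\in\dbU_{s+1}(R)$ and the map is well defined.

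The crux is the homomorphism property, and here I would simply expand the matrix product. For $i\le j$ the $(i,j)$ entry of $\rho^w_R(\sig)\rho^w_R(\tau)$ is
\[
\sum_{i\le k\le j}\eps_{(x_i\cdots x_{k-1}),R}(\sig)\,\eps_{(x_k\cdots x_{j-1}),R}(\tau).
\]
As $k$ runs over $\{i,\dots,j\}$, the pairs $\bigl((x_i\cdots x_{k-1}),(x_k\cdots x_{j-1})\bigr)$ run through exactly the factorizations $v_{ij}=u_1u_2$ into two (possibly empty) pieces, with $k=i$ and $k=j$ giving the factorizations having an empty factor. By the multiplicativity relation (\ref{eps of product}) this sum equals $\eps_{v_{ij},R}(\sig\tau)$, the $(i,j)$ entry of $\rho^w_R(\sig\tau)$; hence $\rho^w_R(\sig\tau)=\rho^w_R(\sig)\rho^w_R(\tau)$. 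I expect this step -- recognizing matrix multiplication as the concatenation rule for interval subwords -- to be the only real content of the argument, though it is more a matter of correct bookkeeping than of genuine difficulty.

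For continuity, I would note that $\Lam_R$ is a continuous homomorphism and that, under the identification of the additive group of $R\langle\langle X\rangle\rangle$ with $R^{X^*}$, each coefficient map $f\mapsto f_v$ is the continuous projection onto a coordinate; thus each $\eps_{v,R}\colon S\to R$ is continuous. Since $\dbU_{s+1}(R)$ carries the subspace topology from the finite product $R^{(s+1)^2}$ and $\rho^w_R$ is assembled from finitely many continuous coordinate functions, it is continuous, completing the verification.
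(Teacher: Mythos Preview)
Your proposal is correct. The paper does not prove this proposition but refers to \cite{Efrat14}*{Lemma 7.5}; your direct verification via (\ref{eps of product}) and the product-topology description of $R\langle\langle X\rangle\rangle$ is exactly the standard argument one expects there, and all three steps (unipotence via $\eps_{\emptyset,R}=1$, multiplicativity via the bijection between cut points $k$ and concatenation factorizations of interval subwords, and continuity of coordinate projections) are handled correctly.
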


\begin{rem}
\label{dual basis}
\rm
In particular, for each $x\in X$ the map $\chi_{x,R}=\eps_{(x),R}\colon S\to R$ is a group homomorphism, where $R$ is considered as an additive group.
The homomorphisms $\chi_{x,R}$, $x\in X$, are dual to the basis $X$, in the sense that $\chi_{x,R}(x)=1$, and $\chi_{x,R}(y)=0$ for $x\neq y$ in $X$.
\end{rem}

\begin{prop}
\label{properties of dbU}
Let  $1\leq s\leq n$.
For $R=\dbZ/p^{n-s+1}$ one has:
\begin{enumerate}
\item[(a)]
$\dbU_{s+1}(R)^{(n,p)}=I_{s+1}+\dbZ p^{n-s}E_{1,s+1}$.
\item[(b)]
$\dbU_{s+1}(R)^{(n,p)}$ is central in $\dbU_{s+1}(R)$.
\end{enumerate}
\end{prop}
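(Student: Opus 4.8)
The plan is to realize $\dbU_{s+1}(R)$, with $R=\dbZ/p^{n-s+1}$, as a quotient of the free profinite group $S=S_X$, and to transport the computation of the filtration term to $S$, where Proposition \ref{Koch} and the triangularity property are already available. Concretely, I would take $X=\{x_1\nek x_s\}$ with $x_1<\cdots<x_s$ and put $w=(x_1\cdots x_s)$, a Lyndon word, and consider $\rho=\rho^w_R\colon S\to\dbU_{s+1}(R)$ from Proposition \ref{rho}. Since $\rho(x_i)=I_{s+1}+E_{i,i+1}$ and these elementary matrices generate the finite group $\dbU_{s+1}(R)$, the map $\rho$ is surjective. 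The lower $p$-central filtration is functorial and carries surjections to surjections, so $\rho(S^{(k,p)})=\dbU_{s+1}(R)^{(k,p)}$ for every $k$; in particular $\dbU_{s+1}(R)^{(n,p)}=\rho(S^{(n,p)})$, and it suffices to compute the right-hand side.

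For the inclusion ``$\subseteq$'' I would take $\sig\in S^{(n,p)}$ and read off the entries of $\rho(\sig)$, which by definition are the coefficients $\eps_{(x_i\cdots x_{j-1}),R}(\sig)$ of the subwords of $w$ of length $d=j-i$. By functoriality of $\Lam_R$ in $R$ under the reduction $\dbZ_p\to R$, these are the reductions mod $p^{n-s+1}$ of $\eps_{(x_i\cdots x_{j-1}),\dbZ_p}(\sig)$, and Proposition \ref{Koch}(b) gives $\eps_{u,\dbZ_p}(\sig)\in p^{n-|u|}\dbZ_p$ for $1\le|u|<n$. Hence for $d<s$ (so $n-d\ge n-s+1$) the entry vanishes in $R$, while for $d=s$, i.e.\ only the corner $(1,s+1)$, the entry lies in $p^{n-s}R$; thus $\rho(\sig)\in I_{s+1}+\dbZ p^{n-s}E_{1,s+1}$. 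For the reverse inclusion I would exhibit one element mapping onto the generator: by Proposition \ref{Koch}(a) and the triangularity Proposition \ref{Lam(tau w)}, the element $\tau_w\in S^{(s,0)}$ satisfies $\eps_{u,\dbZ_p}(\tau_w)=0$ for $1\le|u|<s$ and $\eps_{w,\dbZ_p}(\tau_w)=1$, so $\rho(\tau_w)=I_{s+1}+E_{1,s+1}$. Since $(S^{(s,0)})^{p^{n-s}}\subseteq S^{(n,p)}$ by \cite{NeukirchSchmidtWingberg}*{Prop.\ 3.8.6}, the power $\tau_w^{p^{n-s}}$ lies in $S^{(n,p)}$, and as $E_{1,s+1}^2=0$ we obtain $\rho(\tau_w^{p^{n-s}})=(I_{s+1}+E_{1,s+1})^{p^{n-s}}=I_{s+1}+p^{n-s}E_{1,s+1}$, which generates the cyclic group $I_{s+1}+\dbZ p^{n-s}E_{1,s+1}$ of order $p$. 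Together with ``$\subseteq$'' this gives (a).

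Part (b) is then immediate: every matrix in $I_{s+1}+\dbZ p^{n-s}E_{1,s+1}$ is supported at the top-right corner, and since $E_{1,s+1}$ annihilates every strictly upper-triangular matrix on both sides, such matrices are central in $\dbU_{s+1}(R)$. The step I expect to require the most care is the bookkeeping in ``$\subseteq$'', namely checking that the ranges $d<s$ and $d=s$ interact correctly with the cut-off $p^{n-s+1}=0$ in $R$; in particular the borderline case $s=n$, where Proposition \ref{Koch} imposes no condition on the corner entry yet the claim still holds because $p^{n-s}R=R$. A purely matrix-theoretic alternative would be to prove by induction on $k$ that $\dbU_{s+1}(R)^{(k,p)}$ consists of the matrices whose $(i,j)$ entry lies in $p^{\max(k-(j-i),0)}R$, via $[V_a,V_b]\subseteq V_{a+b}$ and $(V_k)^p\subseteq V_{k+1}$; this avoids $S$ entirely but replaces the appeal to Proposition \ref{Koch} by the analogous valuation estimates for commutators and $p$-th powers of unipotent matrices.
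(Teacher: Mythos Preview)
Your proof is correct and follows essentially the same route as the paper: realize $\dbU_{s+1}(R)$ as the image of $S_X$ under $\rho^w_R$ for $w=(x_1\cdots x_s)$, use Proposition~\ref{Koch}(b) to bound the entries of $\rho(\sig)$ for $\sig\in S^{(n,p)}$, and use the triangularity of $\tau_w$ (Proposition~\ref{Lam(tau w)}) to exhibit a preimage of the corner generator. The only cosmetic difference is that you first compute $\rho(\tau_w)=I_{s+1}+E_{1,s+1}$ and then raise to the $p^{n-s}$-th power via $E_{1,s+1}^2=0$, whereas the paper passes through $\Lam_{\dbZ_p}(\tau_w^{p^{n-s}})$ directly; your explicit remark on the borderline case $s=n$ is a nice addition.
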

\begin{proof}
(a) \quad
We follow the argument of \cite{MinacTan15a}*{Lemma 2.4}.
Take $X=\{x_1\nek x_s\}$ be a set of $s$ elements, let $S=S_X$, and let $w=(x_1\cdots x_s)$.
The matrices $\rho^w_R(x_i)=I_{s+1}+E_{i,i+1}$, $i=1,2\nek s$, generate $\dbU_{s+1}(R)$ \cite{Weir55}*{p.\ 55}, so $\rho^w_R$ is surjective.
Therefore it maps $S^{(n,p)}$ onto $\dbU_{s+1}(R)^{(n,p)}$.

By Proposition \ref{Koch}(b), for $\sig\in S^{(n,p)}$ and $u\in X^*$ of length $1\leq |u|\leq s$ one has $\eps_{u,\dbZ_p}(\sig)\in p^{n-|u|}\dbZ_p$.

If $|u|<s$, then $\eps_{u,\dbZ_p}(\sig)\in p^{n-|u|}\dbZ_p\subseteq p^{n-s+1}\dbZ_p$.
 Hence $\eps_{u,R}(\sig)=0$ in this case.

If $|u|=s$, then $\eps_{u,\dbZ_p}(\sig)\in p^{n-s}\dbZ_p$, so  $\eps_{u,R}(\sig)\in p^{n-s}R$.

Moreover, $\tau_w^{p^{n-s}}\in(S^{(s,0)})^{p^{n-s}}\leq S^{(n,p)}$.
By Proposition \ref{Lam(tau w)}(c),  $\Lam_{\dbZ_p}(\tau_w)=1+w+f$, where $f$ is a combination of words strictly larger than $w$ in $\preceq$.
Hence $\Lam_{\dbZ_p}(\tau_w^{p^{n-s}})=1+p^{n-s}w+g$, where $g$ is also a combination of words strictly larger than $w$ in $\preceq$, which implies that $\eps_{w,R}(\tau_w^{p^{n-s}})=p^{n-s}\cdot 1_R$.

Consequently, $\dbU_{s+1}(R)^{(n,p)}=\rho^w_R(S^{(n,p)})=I_{s+1}+\dbZ p^{n-s}E_{1,s+1}$.

\medskip

(b) \quad
It is straightforward to see that $I_{s+1}+\dbZ E_{1,s+1}$ is central in $\dbU_{s+1}(R)$, so the assertion follows from (a).
\end{proof}

See \cite{Borge04}*{\S2} for a related analysis of the lower $p$-central filtration of $\dbU_{s+1}(\dbZ/p^{n-s+1})$.

\smallskip

Consider the obvious isomorphism
\[
\iota_{n,s}\colon p^{n-s}\dbZ/p^{n-s+1}\dbZ\xrightarrow{\sim}\dbZ/p, \qquad ap^{n-s}\hbox{ (mod }p^{n-s+1}\hbox{)}\mapsto a\hbox{ (mod }p\hbox{)}.
\]
In view of Proposition \ref{properties of dbU}(a), we may define a group isomorphism
\[
\iota_{n,s}^\dbU\colon \dbU_{s+1}(\dbZ/p^{n-s+1})^{(n,p)}\xrightarrow{\sim}\dbZ/p, \quad
(a_{ij})\mapsto \iota_{n,s}(a_{1,s+1}).
\]

Next let $w,w'\in X^*$ be words of lengths $1\leq s,s'\leq n$, respectively, where $w$ is Lyndon.
We have $\tau_w^{p^{n-s}}\in (S^{(s,0)})^{p^{n-s}}\leq S^{(n,p)}$.
By Proposition \ref{Koch}(b),
$\eps_{w',\dbZ_p}(\tau_w^{p^{n-s}})\in p^{n-s'}\dbZ_p$, and therefore $\eps_{w',\dbZ/p^{n-s'+1}}(\tau_w^{p^{n-s}})\in p^{n-s'}\dbZ/p^{n-s'+1}\dbZ$.
We set
\begin{equation}
\label{description of pairing}
\langle w,w'\rangle_n=\iota_{n,s'}(\eps_{w',\dbZ/p^{n-s'+1}}(\tau_w^{p^{n-s}}))\in\dbZ/p.
\end{equation}
Alternatively,
\begin{equation}
\label{description of pairing 2}
\langle w,w'\rangle_n=\iota_{n,s'}^\dbU(\rho^{w'}_{\dbZ/p^{n-s'+1}}(\tau_w^{p^{n-s}})).
\end{equation}

Let $\preceq$ be as in (\ref{preceq}).

\begin{prop}
\label{vanishing of iota}
Let $w,w'$ be words in $X^*$ of lengths $1\leq s,s'\leq n$, respectively, with $w$ Lyndon.
\begin{enumerate}
\item[(a)]
If $w'\prec w$, then  $\langle w,w'\rangle_n=0$;
\item[(b)]
$\langle w,w\rangle_n=1$;
\item[(c)]
If $w'$ contains letters which do not appear in $w$, then  $\langle w,w'\rangle_n=0$;
\item[(d)]
If $s<s'<2s$, then  $\langle w,w'\rangle_n=0$.
\end{enumerate}
\end{prop}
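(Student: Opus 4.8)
The plan is to prove all four parts from the explicit description of the pairing in $(\ref{description of pairing})$, namely $\langle w,w'\rangle_n=\iota_{n,s'}(\eps_{w',\dbZ/p^{n-s'+1}}(\tau_w^{p^{n-s}}))$, by analyzing the coefficient $\eps_{w',\dbZ_p}(\tau_w^{p^{n-s}})$ via the triangularity property of Lyndon words (Proposition \ref{Lam(tau w)}). The common starting point for all parts is the expansion
\[
\Lam_{\dbZ_p}(\tau_w^{p^{n-s}})=1+p^{n-s}w+g,
\]
where $g$ is a $\dbZ_p$-combination of words strictly larger than $w$ in $\preceq$. This was already established inside the proof of Proposition \ref{properties of dbU}(a): starting from $\Lam_{\dbZ_p}(\tau_w)=1+w+f$ with $f$ supported on words $\succ w$, raising to the $p^{n-s}$ power contributes $p^{n-s}w$ plus higher-order $\preceq$-terms. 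I would record this expansion once at the outset and then read off each claim.

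For part (a): if $w'\prec w$, then $w'$ does not appear among the words $1$, $w$, or those occurring in $g$ (all of which are $\succeq w$), so $\eps_{w',\dbZ_p}(\tau_w^{p^{n-s}})=0$, giving $\langle w,w'\rangle_n=0$. For part (b): the coefficient of $w$ itself in the expansion is exactly $p^{n-s}$ (the word $w$ occurs with coefficient $p^{n-s}$, and none of the $\succ w$ terms in $g$ equal $w$), so $\eps_{w,\dbZ/p^{n-s+1}}(\tau_w^{p^{n-s}})=p^{n-s}\cdot 1_R$, and applying $\iota_{n,s}$ yields $1$. Note here that for $w'=w$ we have $s'=s$, so the relevant ring is $\dbZ/p^{n-s+1}$, matching the computation in Proposition \ref{properties of dbU}(a) verbatim.

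For part (c): this is essentially a support statement about the alphabet. The polynomial $P_w$, and hence (by Proposition \ref{Lam(tau w)}(a)) all of $\Lam_{\dbZ_p}(\tau_w)-1$, is built from $w$ by the bracketing recursion $P_w=P_{w'}P_{w''}-P_{w''}P_{w'}$, so every monomial appearing in $\Lam_{\dbZ_p}(\tau_w)$ uses only letters occurring in $w$; raising to a power preserves this. Thus if $w'$ contains a letter absent from $w$, its coefficient vanishes. I would make this precise by an easy induction on $|w|$ showing that $\Lam_{\dbZ_p}(\tau_w)$ lies in the closed subalgebra generated by the letters of $w$ — this is the cleanest route, invoking functoriality of $\Lam_{\dbZ_p}$ in $X$.

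Part (d) is the one genuine obstacle and will require a grading/degree argument rather than mere triangularity. The point is that $\tau_w^{p^{n-s}}$ is a $p^{n-s}$-power of $\tau_w\in S^{(s,0)}$, and I expect its Magnus expansion to be supported, modulo higher terms, on words whose lengths are multiples of $s$: heuristically $\Lam_{\dbZ_p}(\tau_w^{p^{n-s}})$ is a power series in the homogeneous degree-$s$ piece $w+\cdots$ together with commutator corrections living in degrees $\geq 2s$. Concretely, $\Lam_{\dbZ_p}(\tau_w)=1+P_w+O(s+1)$ with $P_w$ homogeneous of degree $s$; the leading nonconstant homogeneous part of any power $\Lam_{\dbZ_p}(\tau_w^k)$ in each degree $<2s$ is forced to be a scalar multiple of $P_w$ in degree exactly $s$ and to vanish in every intermediate degree $s<d<2s$, since the next available contribution $P_w^2$ or $[\,\cdot\,,\cdot]$-corrections only reach degree $2s$. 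Hence for $s<s'<2s$ the degree-$s'$ component is zero, so $\eps_{w',\dbZ_p}(\tau_w^{p^{n-s}})=0$. The hard part will be verifying rigorously that no contributions arise in the open range of degrees $(s,2s)$; I would handle this by writing $\Lam_{\dbZ_p}(\tau_w)=1+P_w+O(s+1)$ and expanding $(1+P_w+O(s+1))^{p^{n-s}}$ via the binomial/exponential formalism, observing that every term other than $1$ and the degree-$s$ term $p^{n-s}P_w$ has degree $\geq 2s$ — using that products of two factors each of degree $\geq s$ land in degree $\geq 2s$, and that the $O(s+1)$ tail, once multiplied against anything nontrivial, also exceeds degree $s'<2s$. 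This degree bookkeeping is the crux and deserves careful statement.
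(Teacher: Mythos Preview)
Your arguments for (a), (b), and (c) are correct and follow the paper's approach (with (c) spelled out more carefully than the paper's one-line ``clearly'').

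For (d), however, there is a genuine gap. You claim that in the expansion of $(1+P_w+O(s+1))^{p^{n-s}}$ every term other than $1$ and $p^{n-s}P_w$ has degree $\geq 2s$, and hence that $\eps_{w',\dbZ_p}(\tau_w^{p^{n-s}})=0$ whenever $s<|w'|<2s$. This is false: the linear term of the binomial expansion is $p^{n-s}(P_w+Q)$, where $Q$ is the $O(s+1)$ tail, and $Q$ already has nonzero components in degrees $s+1,s+2,\ldots$. For a concrete counterexample take $n=3$, $s=2$, $w=(xy)$: a direct computation gives
\[
\Lam_{\dbZ_p}([x,y])=1+(xy-yx)+(-xxy+xyx+yyx-yxy)+O(4),
\]
so the degree-$3$ part of $Q$ is nonzero and $\eps_{(xxy),\dbZ_p}([x,y]^{p})=-p\neq0$.

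What your expansion \emph{does} show is the weaker (but sufficient) statement that every coefficient in degree $<2s$ of $\Lam_{\dbZ_p}(\tau_w^{p^{n-s}})-1$ lies in $p^{n-s}\dbZ_p$: the linear term carries the factor $p^{n-s}$, and all higher binomial terms $A^j$ with $j\geq2$ (where $A=P_w+Q$ has degree $\geq s$) land in degree $\geq 2s>s'$. Thus $\eps_{w',\dbZ_p}(\tau_w^{p^{n-s}})\in p^{n-s}\dbZ_p$. Since $s<s'$ forces $n-s\geq n-s'+1$, this coefficient vanishes modulo $p^{n-s'+1}$, and applying $\iota_{n,s'}$ gives $\langle w,w'\rangle_n=0$. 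This $p$-divisibility argument, rather than outright vanishing over $\dbZ_p$, is exactly how the paper proves (d).
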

\begin{proof}
(a), (b) \quad
Proposition \ref{Lam(tau w)}(c) implies that $\Lam_{\dbZ_p}(\tau_w^{p^{n-s}})-1-p^{n-s}w$ is a combination of words strictly larger than $w$ with respect to $\preceq$,
and the same therefore holds over the coefficient  ring $\dbZ/p^{n-s'+1}$.
Hence, if $w'\prec w$, then $\eps_{w',\dbZ/p^{n-s'+1}}(\tau_w^{p^{n-s}})=0$, so $\langle w,w'\rangle_n=0$.
If $w=w'$, then $\eps_{w,\dbZ/p^{n-s+1}}(\tau_w^{p^{n-s}})=p^{n-s}\cdot1_{\dbZ/p^{n-s+1}}$, whence  $\langle w,w\rangle_n=1$.

\medskip

(c) \quad
Here we clearly have $\eps_{w',\dbZ/p^{n-s'+1}}(\tau_w^{p^{n-s}})=0$.

\medskip

(d) \quad
Since $\tau_w\in S^{(s,0)}$, one may write $\Lam_{\dbZ_p}(\tau_w)=1+P+O(s'+1)$, where $P$ is a combination of words $w''$ of length $s\leq |w''|\leq s'$, and $O(s'+1)$ denotes a combination of words of length $\geq s'+1$ (Proposition \ref{Koch}(a)).
Since $s'<2s$, this implies that $\Lam_{\dbZ_p}(\tau_w^{p^{n-s}})=1+p^{n-s}P+O(s'+1)$.
In particular, $\eps_{w',\dbZ_p}(\tau_w^{p^{n-s}})\in p^{n-s}\dbZ_p$,
and therefore
\[
\eps_{w',\dbZ/p^{n-s'+1}}(\tau_w^{p^{n-s}})\in p^{n-s}(\dbZ/p^{n-s'+1})=\{0\},
\]
since  $s<s'$.
Hence  $\langle w,w'\rangle_n=0$.
\end{proof}

\section{Transgressions}
\label{section on transgression}
Given a profinite group $G$ and a discrete $G$-module $A$, we write as usual $C^i(G,A)$, $Z^i(G,A)$, and $H^i(G,A)$ for the corresponding group of continuous $i$-cochains, group of continuous $i$-cocycles, and the $i$th profinite cohomology group, respectively.
For $x\in Z^i(G,A)$ let $[x]$ be its cohomology class in $H^i(G,A)$.

For a normal closed subgroup $N$ of $G$, let $\trg\colon H^1(N,A)^G\to H^2(G/N,A^N)$ be the transgression homomorphism.
It is the map $d_2^{0,1}$ of the Lyndon--Hochschild--Serre spectral sequence associated with $G$ and $N$  \cite{NeukirchSchmidtWingberg}*{Th.\ 2.4.3}.
We recall the explicit description of $\trg$, assuming for simplicity that the $G$-action on $A$ is trivial \cite{NeukirchSchmidtWingberg}*{Prop.\ 1.6.6}:
If $x\in Z^1(N,A)$, then there exists $y\in C^1(G,A)$ such that $y|_N=x$ and $(\partial y)(\sig_1,\sig_2)$ depends only on the cosets of $\sig_1,\sig_2$ modulo $N$, so that $\partial y$ may be viewed as an element of $Z^2(G/N,A)$.
For any such $y$ one has $\trg([x])=[\partial y]$.

We fix for the rest of this section a finite group $\dbU$ and a normal subgroup $N$ of $\dbU$ satisfying:
\begin{enumerate}
\item[(i)]
$N\isom \dbZ/p$; and
\item[(ii)]
$N$ lies in the center of $\dbU$.
\end{enumerate}
We set $\bar\dbU=\dbU/N$, and let it act trivially on $\dbU$.
We denote the image of $u\in\dbU$ in $\bar\dbU$ by $\bar u$.
We may choose a section $\lam$ of the projection $\dbU\to\bar\dbU$ such that $\lam(\bar 1)=1$.
We define a map $\del\in C^2(\bar\dbU,N)$ by
\[
\del(\bar u,\bar u')=\lam(\bar u)\cdot\lam(\bar u')\cdot\lam(\bar u\bar u')\inv.
\]
It is normalized, i.e., $\del(\bar u,1)=\del(1,\bar u)=1$ for every $\bar u\in\bar\dbU$.

We also define  $y\in C^1(\dbU,N)$ by $y(u)=u\lam(\bar u)\inv$.
Note that $y|_N=\id_N$.

\begin{lem}
\label{lemma with centers}
For every $u,u'\in\dbU$ one has
\[
\del(\bar u,\bar u')\cdot y(u)\cdot y(u')=y(uu').
\]
\end{lem}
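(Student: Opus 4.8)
The plan is to reduce the identity to a computation inside the central subgroup $N$, where the noncommutativity of $\dbU$ plays no role. The first step is to observe that all of the relevant quantities actually lie in $N$. Indeed, projecting to $\bar\dbU$ gives $\overline{y(u)}=\bar u\cdot\overline{\lam(\bar u)}\inv=\bar u\bar u\inv=\bar 1$, so $y(u)\in N$ for every $u$, and likewise $y(u'),y(uu')\in N$; similarly $\del(\bar u,\bar u')$ projects to $\bar u\bar u'(\bar u\bar u')\inv=\bar 1$, so $\del(\bar u,\bar u')\in N$. Since $N\isom\dbZ/p$ is central in $\dbU$ by hypothesis (ii), each of $\del(\bar u,\bar u')$, $y(u)$, $y(u')$ commutes with every element of $\dbU$.

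The second step is the actual cancellation. Using the centrality of $\del(\bar u,\bar u')$ to move it to the right of $y(u)y(u')$, it suffices to show $y(u)y(u')\del(\bar u,\bar u')=y(uu')$. Substituting the definitions, the left-hand side becomes
\[
u\lam(\bar u)\inv\cdot u'\lam(\bar u')\inv\cdot \lam(\bar u)\lam(\bar u')\lam(\bar u\bar u')\inv.
\]
I would then use the centrality of $y(u')=u'\lam(\bar u')\inv$ to pull it out from between the factors $\lam(\bar u)\inv$ and $\lam(\bar u)$, so that these two cancel; a second, telescoping cancellation of $\lam(\bar u')\inv\lam(\bar u')$ collapses the expression to $uu'\lam(\bar u\bar u')\inv$. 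Since $\overline{uu'}=\bar u\bar u'$ one has $\lam(\overline{uu'})=\lam(\bar u\bar u')$, so this is exactly $y(uu')$, as required.

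The computation involves no genuine obstacle once the first step is in place: the entire point is that the three factors $\del(\bar u,\bar u')$, $y(u)$, $y(u')$ are central, which is what permits the section values $\lam(\bar u)$, $\lam(\bar u')$ to be shuffled out of the way and cancelled despite $\dbU$ being nonabelian. Conceptually, the identity is simply the defining relation of the central extension $0\to N\to\dbU\to\bar\dbU\to1$ associated with the factor set $\del$ and the section $\lam$, measuring the failure of $y$ to be multiplicative; the only thing requiring care is the bookkeeping of the non-central terms $u,u',\lam(\bar u),\lam(\bar u')$ during the cancellation, which the centrality of $N$ renders routine.
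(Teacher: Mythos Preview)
Your proof is correct and follows essentially the same approach as the paper's: both use that $y(u)$ and $y(u')$ lie in the central subgroup $N$ to shuffle the section values $\lam(\bar u)$, $\lam(\bar u')$ past them and effect the telescoping cancellation down to $uu'\lam(\overline{uu'})\inv$. The only cosmetic difference is the order in which the commutations are performed (you move $\del$ to the right first, while the paper moves $y(u)$ and $y(u')$ to the left), which is immaterial.
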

\begin{proof}
Since $y(u)$ and $y(u')$ are in $N$, they are central in $\dbU$, so
\[
\begin{split}
\del(\bar u,\bar u')\cdot y(u)\cdot y(u')
&=\lam(\bar u)\cdot\lam(\bar u')\cdot\lam(\bar u\bar u')\inv  \cdot y(u)\cdot y(u')\\
&=y(u)\cdot\lam(\bar u)\cdot y(u')\cdot \lam(\bar u')\cdot\lam(\bar u\bar u')\inv \\
&=uu'\lam(\bar u\bar u')\inv=y(uu').
\end{split}
\]
\end{proof}

For the correspondence between elements of $H^2$ and central extensions see e.g., \cite{NeukirchSchmidtWingberg}*{Th.\ 1.2.4}.

\begin{prop}
\label{properties of del}
Using the notation above, the following holds.
\begin{enumerate}
\item[(a)]
$\del\in Z^2(\bar\dbU,N)$;
\item[(b)]
One has $\trg(\id_N)=-[\del]$ for the transgression map $\trg\colon H^1(N,N)^\dbU\to H^2(\bar\dbU,N)$.
\item[(c)]
The cohomology class $[\del]\in H^2(\bar\dbU,N)$ corresponds to the equivalence class of the central extension
\begin{equation}
\label{basic extension}
1\to N\to \dbU\to\bar\dbU\to1.
\end{equation}
\end{enumerate}
\end{prop}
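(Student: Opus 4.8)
The plan is to establish the three parts in order, using the explicit formulas for $\del$ and $y$ together with the standard dictionary between $H^2$ and central extensions. For part (a), I would verify the cocycle identity directly. Since $N$ is central and normalized, the map $\del$ lands in $N$ and one must check that $\partial\del = 0$ in $C^3(\bar\dbU,N)$. The cleanest route is to use Lemma \ref{lemma with centers}: it exhibits $\del$ as the coboundary-type defect of the $1$-cochain $y$ along the multiplicative structure. Concretely, from $\del(\bar u,\bar u')\cdot y(u)\cdot y(u')=y(uu')$ one reads off that $\del$ is the pullback to $\bar\dbU$ of the factor system measuring the failure of $u\mapsto y(u)$ to be multiplicative; associativity of multiplication in $\dbU$ then forces the $3$-cocycle condition on $\del$. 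I expect this to be a short computation once the centrality of $N$ is invoked to commute the relevant factors past each other.

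For part (b), I would appeal to the explicit description of the transgression recalled just before the statement. Applying that recipe with $G=\dbU$, $N$ as given, $A=N$ (trivial action), and the distinguished cocycle $\id_N\in Z^1(N,N)$, I need a cochain $y\in C^1(\dbU,N)$ extending $\id_N$ such that $\partial y$ descends to $Z^2(\bar\dbU,N)$. The cochain $y(u)=u\lam(\bar u)\inv$ already satisfies $y|_N=\id_N$, which is exactly the required compatibility noted after its definition. The key step is to compute $\partial y$ and identify it with $-\del$ (viewed through $\bar\dbU$). Rewriting Lemma \ref{lemma with centers} as $y(u)\cdot y(u')\cdot y(uu')\inv = \del(\bar u,\bar u')\inv$ shows that $(\partial y)(u,u')$ depends only on $\bar u,\bar u'$ and equals $\del(\bar u,\bar u')\inv$; since $N\isom\dbZ/p$ is written multiplicatively here but $[\del]$ is additive in cohomology, the inverse becomes the sign, giving $\trg([\id_N])=[\partial y]=-[\del]$.

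For part (c), I would invoke the standard correspondence \cite{NeukirchSchmidtWingberg}*{Th.\ 1.2.4} between $H^2(\bar\dbU,N)$ and equivalence classes of central extensions of $\bar\dbU$ by $N$. Under this correspondence, the class attached to the extension (\ref{basic extension}) is represented by the factor system obtained from any set-theoretic section: choosing the section $\lam$ with $\lam(\bar 1)=1$, the factor system is precisely $\del(\bar u,\bar u')=\lam(\bar u)\lam(\bar u')\lam(\bar u\bar u')\inv$. Thus $[\del]$ is by definition the class of (\ref{basic extension}), and the assertion is immediate from the construction of $\del$.

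The main obstacle is bookkeeping of signs and of the additive-versus-multiplicative conventions: $N\isom\dbZ/p$ is manipulated multiplicatively inside $\dbU$ in Lemma \ref{lemma with centers}, while cohomology classes are additive, so the inverse in $(\partial y)(u,u')=\del(\bar u,\bar u')\inv$ is what produces the sign in $\trg([\id_N])=-[\del]$. I would be careful to fix one convention for the coboundary operator $\partial$ and track how the centrality of $N$ lets all rearrangements in Lemma \ref{lemma with centers} go through, since that centrality is the only nontrivial input and everything else is formal.
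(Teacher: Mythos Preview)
Your proposal is correct and matches the paper's approach for (a) and (b): both use Lemma~\ref{lemma with centers} to compute $(\partial y)(u,u')=\del(\bar u,\bar u')\inv$, which simultaneously yields the cocycle property of $\del$ and the identification $\trg(\id_N)=-[\del]$. For (c) you simply invoke the standard correspondence, whereas the paper makes this explicit by constructing the twisted product $B=N\times\bar\dbU$ from $\del$ and verifying (again via Lemma~\ref{lemma with centers}) that $u\mapsto(y(u),\bar u)$ is an isomorphism $\dbU\xrightarrow{\sim}B$ of extensions; your citation of \cite{NeukirchSchmidtWingberg}*{Th.\ 1.2.4} encapsulates exactly this, so the difference is only in level of detail.
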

\begin{proof}
(a), (b): \quad
For $u,u'\in\dbU$ Lemma \ref{lemma with centers} gives
\[
(\partial y)(u,u')=y(u)\cdot y(u')\cdot y(uu')\inv
=\del(\bar u,\bar u')\inv.
\]
This shows that $\del$ is a 2-cocycle, and that $(\partial y)(u,u')$ depends only on the cosets $\bar u,\bar u'$.
Further, $\id_N\in Z^1(N,N)$.
By the explicit description of the transgression above, $\trg(\id_N)=-[\del]$.

\medskip

(c) \quad
Consider the set $B=N\times\bar\dbU$ with the binary operation
\[
(u,\bar v)*(u',\bar v')=(\del(\bar v,\bar v')uu',\bar v\bar v').
\]
 The proof of \cite{NeukirchSchmidtWingberg}*{Th.\ 1.2.4} shows that  this makes $B$ a group, and $[\del]$ corresponds to the equivalence class of the central extension
\begin{equation}
\label{NBU}
1\to N\to B\to\bar\dbU\to1.
\end{equation}
Moreover,  the map $h\colon \dbU\to B$,  $u\mapsto(y(u),\bar u)$ is clearly bijective.
We claim that it is a homomorphism, whence an isomorphism.
Indeed, for $u,u'\in\dbU$ Lemma \ref{lemma with centers} gives:
\[
\begin{split}
h(u)*h(u')&=(y(u),\bar u) *(y(u'),\bar u')=(\del(\bar u,\bar u')y(u)y(u'),\bar u\bar u')\\
&=(y(uu'),\bar u\bar u')=h(uu').
\end{split}
\]
We obtain that the central extension (\ref{NBU}) is equivalent to the central extension (\ref{basic extension}).
\end{proof}

Next let $\bar G$ be a profinite group, and let $\bar\rho\colon \bar G\to\bar\dbU$ be a continuous homomorphism.
Let $\iota\colon N\xrightarrow{\sim}\dbZ/p$ be a fixed isomorphism (see (i)).
Set
\begin{equation}
\label{alp}
\alp=(\bar\rho^*\circ\iota_*)([\del])=-(\bar\rho^*\circ\iota_*\circ\trg)(\id_N)\in H^2(\bar G,\dbZ/p),
\end{equation}
where the second equality is by Proposition \ref{properties of del}(b).
Then $\alp$ corresponds to the equivalence class of the central extension
\begin{equation}
\label{central extension revised}
0\to\dbZ/p\xrightarrow{\iota\inv\times1} \dbU\times_{\bar\dbU}\bar G\to \bar G\to1,
\end{equation}
where $\dbU\times_{\bar\dbU}\bar G$ is the fiber product with respect to the natural projection $\dbU\to\bar\dbU$ and to $\bar\rho$;
See \cite{Hoechsmann68}*{Proof of 1.1}.

Suppose further that there is a profinite group $G$, a closed normal subgroup $M$ of $G$, and a continuous homomorphism $\rho\colon G\to\dbU$ such that $\bar G=G/M$, $\rho(M)\leq N$, and $\bar\rho\colon \bar G\to\bar\dbU$ is induced from $\rho$.
The functoriality of transgression yields a commutative diagram
\[
\xymatrix{
H^1(N,N)^{\dbU}\ar[r]^{\iota_*}_{\sim}\ar[d]^{\trg}& H^1(N,\dbZ/p)^{\dbU}\ar[r]^{\rho^*}\ar[d]^{\trg} &
H^1(M,\dbZ/p)^G\ar[d]^{\trg} \\
H^2(\bar\dbU,N)\ar[r]^{\iota_*}_{\sim} & H^2(\bar\dbU,\dbZ/p)\ar[r]^{\bar\rho^*}& H^2(\bar G,\dbZ/p).}
\]
The image  of $\id_N\in H^1(N,N)$ in $H^1(M,\dbZ/p)^G$  is
\begin{equation}
\label{theta and rho}
\theta=\iota\circ(\rho|_M)\in H^1(M,\dbZ/p).
\end{equation}
By (\ref{alp}) and the commutativity of the diagram,
\begin{equation}
\label{alp and del}
\alp=-\trg(\theta)\in H^2(\bar G,\dbZ/p)
\end{equation}

\begin{rem}
\label{alp as a connecting hom}
\rm
Suppose that $\bar\dbU$ is abelian and that $\bar G$ acts trivially on $\bar\dbU$.
A $2$-cocycle representing $\alp$ is
\[
(\bar\sig,\bar\sig')\mapsto\iota(\lam(\bar\rho(\bar\sig))\cdot\lam(\bar\rho(\bar\sig'))\cdot\lam(\bar\rho(\bar\sig\bar\sig'))\inv).
\]
But $\lam(\bar\rho(\bar\sig))\cdot\lam(\bar\rho(\bar\sig'))\cdot\lam(\bar\rho(\bar\sig\bar\sig'))\inv$ is a $2$-cocycle representing the image of $\bar\rho$ under the connecting homomorphism $H^1(\bar G,\bar\dbU)\to H^2(\bar G,N)$ arising from (\ref{basic extension}).
Thus $\alp$ is the image of $\bar\rho$ under the composition
\[
H^1(\bar G,\bar\dbU)\to H^2(\bar G,N)\xrightarrow{\iota_*}H^2(\bar G,\dbZ/p).
\]
\end{rem}

\begin{exam}
\label{examples of  alp}
\rm
We give several examples of the above construction with the group $\dbU=\dbU_{s+1}(\dbZ/p^{n-s+1})$ (where $1\leq s\leq n$), a continuous homomorphism $\rho\colon G\to\dbU$, where $G$ is a profinite group, and the induced homomorphism $\bar\rho\colon  G^{[n,p]}\to \dbU^{[n,p]}$.
Note that assumptions (i) and (ii)  then hold for $N=\dbU^{(n,p)}$, by Proposition \ref{properties of dbU}.
We will be especially interested in the case where $G=S=S_X$ is a free profinite group, $M=S^{(n,p)}$, $\rho=\rho_{\dbZ/p^{n-s+1}}^{w}$ for a word $w\in X^*$ of length $1\leq s\leq n$, and
$\bar\rho=\bar\rho^w_{\dbZ/p^{n-s+1}}\colon S^{[n,p]}\to\dbU^{[n,p]}$ is the induced homomorphism.
In this setup we write $\alp_{w,n}$ for $\alp$.

\medskip

(1) {\sl Bocksteins.} \quad
For a positive integer $m$ and a profinite group $\bar G$, the connecting homomorphism arising from the short exact sequence of trivial $\bar G$-modules
\[
0\to \dbZ/p\to\dbZ/pm\to\dbZ/m\to0
\]
is the {\bf Bockstein homomorphism}
\[
\Bock_{m,\bar G}\colon H^1(\bar G,\dbZ/m)\to H^2(\bar G,\dbZ/p).
\]
Let  $\dbU=\dbU_2(\dbZ/p^n)$ (i.e., $s=1$).
There is a commutative diagram of central extensions
\[
\xymatrix{
1\ar[r]&\dbU^{(n,p)}\ar[r]\ar[d]_{\wr}&\dbU\ar[r]\ar[d]_{\wr}&\dbU^{[n,p]}\ar[r]\ar[d]_{\wr}&1\\
0\ar[r]&p^{n-1}\dbZ/p^n\dbZ\ar[r]&\dbZ/p^n\ar[r]&\dbZ/p^{n-1}\ar[r]&0.
}
\]
For a profinite group $\bar G$ and a homomorphism $\bar\rho\colon \bar G\to\dbZ/p^{n-1}$, Remark \ref{alp as a connecting hom} therefore implies that $\alp=\Bock_{p^{n-1},\bar G}(\bar\rho)$.

In particular, for $x\in X$ take
\[
\bar\rho=\bar\rho^{(x)}_{\dbZ/p^n}\colon \bar G=S^{[n,p]}\to \dbU_2(\dbZ/p^n)^{[n,p]}.
\]
Identifying $\dbU_2(\dbZ/p^n)^{[n,p]}=\dbZ/p^{n-1}$, we obtain
\[
\alp_{(x),n}=\Bock_{p^{n-1},S^{[n,p]}}(\eps_{(x),\dbZ/p^{n-1}}).
\]

\medskip

(2) \quad {\sl Massey products.} \quad
Let $n=s\geq2$, so $\dbU=\dbU_{n+1}(\dbZ/p)$.
Let  $\bar G$ be a profinite group, let $\bar\rho\colon \bar G\to\dbU^{[n,p]}$ be a continuous homomorphism, and let $\rho_{i,i+1}\colon \bar G\to\dbZ/p$ denote its projection on the $(i,i+1)$-entry, $i=1,2\nek n$.
By a result of Dwyer \cite{Dwyer75}*{Th.\ 2.6}, the extension (\ref{central extension revised}) then corresponds to a defining system for the $n$-fold Massey product
\[
\langle\rho_{12},\rho_{23}\nek\rho_{n,n+1}\rangle\subseteq H^2(\bar G,\dbZ/p)
 \]
(see \cite{Efrat14}*{Prop.\ 8.3} for the profinite analog of this fact).
Thus, for a fixed $\bar G$ and for homomorphisms $\bar\rho_1\nek\bar\rho_n\colon \bar G\to\dbZ/p$, with $\bar\rho$ varying over all homomorphisms such that $\bar\rho_{i,i+1}=\bar\rho_i$, $i=1,2\nek n$, the cohomology element $\alp$ ranges over the elements of the Massey product $\langle\bar\rho_1\nek\bar\rho_n\rangle$.

In particular, for $\bar G=S^{[n,p]}$ and for  a word $w=(x_1\cdots x_n)\in X^*$ of length $n$, the cohomology elements $\alp_{w,n}$ range over the Massey product $\langle\eps_{(x_1),\dbZ/p}\nek\eps_{(x_n),\dbZ/p}\rangle\subseteq H^2(S^{[n,p]},\dbZ/p)$, where the $\eps_{(x_i),\dbZ/p}$ are viewed as elements of $H^1(S^{[n,p]},\dbZ/p)$.

\medskip

(3) \quad {\sl Cup products.} \quad
In the special case $n=s=2$, the Massey product contains only the cup product.
Hence for every profinite group $\bar G$ and a homomorphism $\bar\rho\colon\bar G\to\dbU_3(\dbZ/p)$ the cohomology element $\alp\in H^2(\bar G,\dbZ/p)$ is the cup product $\bar\rho_{12}\cup\bar\rho_{23}$.
In particular,  for $w=(xy)$ we have
\[
\alp_{(xy),\dbZ/p}=\eps_{(x),\dbZ/p}\cup\eps_{(y),\dbZ/p}\in H^2(S^{[2,p]},\dbZ/p).
\]
\end{exam}

\section{Cohomological duality}
Let $S=S_X$ be again a free profinite group on the totally ordered set $X$, and let it act trivially on $\dbZ/p$.
Let $n\geq2$, so $S^{(n,p)}\leq S^p[S,S]$.
Then the inflation map $H^1(S^{[n,p]},\dbZ/p)\to H^1(S,\dbZ/p)$ is an isomorphism.
Further, $H^2(S,\dbZ/p)=0$.
By the five-term sequence of cohomology groups \cite{NeukirchSchmidtWingberg}*{Prop.\ 1.6.7}, $\trg\colon H^1(S^{(n,p)},\dbZ/p)^S\to H^2(S^{[n,p]},\dbZ/p)$ is an isomorphism.

There is a natural non-degenerate bilinear map
\[
S^{(n,p)}/S^{(n+1,p)}\times H^1(S^{(n,p)},\dbZ/p)^S\to\dbZ/p, \quad (\bar\sig,\varphi)\mapsto\varphi(\sig)
\]
(see \cite{EfratMinac11}*{Cor.\ 2.2}).
It induces a bilinear map
\[
(\cdot,\cdot)_n\colon S^{(n,p)}\times H^2(S^{[n,p]},\dbZ/p)\to\dbZ/p, \quad (\sig,\alp)_n=-(\trg\inv(\alp))(\sig),
\]
with left kernel $S^{(n+1,p)}$ and trivial right kernel.

Now let $w\in X^*$ be a word of length $1\leq s\leq n$.
As in Examples \ref{examples of  alp}, we apply the computations in Section \ref{section on transgression} to the group $\dbU=\dbU_{s+1}(\dbZ/p^{n-s+1})$, the open normal subgroup $N=\dbU^{(n,p)}$, the homomorphism $\rho=\rho^w_{\dbZ/p^{n-s+1}}\colon S\to\dbU$,  the induced homomorphism $\bar\rho=\bar\rho^w_{\dbZ/p^{n-s+1}}\colon S^{[n,p]}\to\dbU^{[n,p]}$, and the closed normal subgroup $M=S^{(n,p)}$ of $S$.
We write $\theta_{w,n},\alp_{w,n}$ for $\theta,\alp$, respectively.

\begin{lem}
For $\sig\in S^{(n,p)}$ and a word $w\in X^*$ of length $1\leq s\leq n$ one has
\[
(\sig,\alp_{w,n})_n=\iota_{n,s}(\eps_{w,\dbZ/p^{n-s+1}}(\sig)).
\]
\end{lem}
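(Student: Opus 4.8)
The plan is to reduce the statement to the explicit description of $\theta_{w,n}$ obtained in Section \ref{section on transgression}, exploiting the fact that the pairing $(\cdot,\cdot)_n$ is defined through $\trg\inv$. First I would recall the defining relation $(\sig,\alp_{w,n})_n=-(\trg\inv(\alp_{w,n}))(\sig)$, together with the identity $\alp_{w,n}=-\trg(\theta_{w,n})$ from (\ref{alp and del}). Applying $\trg\inv$ to the latter — which is legitimate because $\trg$ is an isomorphism by the five-term exact sequence argument preceding the lemma — gives $\trg\inv(\alp_{w,n})=-\theta_{w,n}$, so the two minus signs cancel and $(\sig,\alp_{w,n})_n=\theta_{w,n}(\sig)$.

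It then remains to evaluate $\theta_{w,n}$ on $\sig\in S^{(n,p)}=M$. By (\ref{theta and rho}) one has $\theta_{w,n}=\iota\circ(\rho^w_{\dbZ/p^{n-s+1}}|_M)$, where $\iota\colon N=\dbU^{(n,p)}\xrightarrow{\sim}\dbZ/p$ is the fixed isomorphism, which here is $\iota_{n,s}^\dbU$. For $\sig\in S^{(n,p)}$ the matrix $\rho^w_{\dbZ/p^{n-s+1}}(\sig)$ lies in $N$, and by Proposition \ref{rho} its $(1,s+1)$-entry equals $\eps_{(x_1\cdots x_s),\dbZ/p^{n-s+1}}(\sig)=\eps_{w,\dbZ/p^{n-s+1}}(\sig)$, since $w=(x_1\cdots x_s)$ and the block from $i=1$ to $j=s+1$ records exactly the coefficient of $w$. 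Applying $\iota_{n,s}^\dbU$, which extracts the $(1,s+1)$-entry and then applies $\iota_{n,s}$, yields $\theta_{w,n}(\sig)=\iota_{n,s}(\eps_{w,\dbZ/p^{n-s+1}}(\sig))$, which is the claimed formula.

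The argument is essentially a bookkeeping exercise, so I expect no serious obstacle. The one point requiring genuine care is the sign: it comes out correctly precisely because both the pairing and the identity (\ref{alp and del}) carry a minus sign, and these cancel to leave $(\sig,\alp_{w,n})_n=+\theta_{w,n}(\sig)$. A secondary point to confirm is that the abstract "fixed isomorphism" $\iota$ appearing in the general transgression setup of Section \ref{section on transgression} is indeed the concrete map $\iota_{n,s}^\dbU$ defined after Proposition \ref{properties of dbU}; this is what allows $\iota$ to factor as "read off the $(1,s+1)$-entry, then apply $\iota_{n,s}$" and thereby match the right-hand side of the asserted identity.
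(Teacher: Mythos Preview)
Your proposal is correct and follows essentially the same approach as the paper's own proof, which compresses the whole argument into a single chain of equalities
\[
(\sig,\alp_{w,n})_n=\theta_{w,n}(\sig)=\iota_{n,s}^\dbU(\rho_{\dbZ/p^{n-s+1}}^w(\sig))=\iota_{n,s}(\eps_{w,\dbZ/p^{n-s+1}}(\sig)),
\]
citing (\ref{alp and del}) and (\ref{theta and rho}). Your version simply unpacks each step --- the sign cancellation, the invertibility of $\trg$, and the identification $\iota=\iota_{n,s}^\dbU$ --- explicitly.
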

\begin{proof}
By (\ref{alp and del}) and (\ref{theta and rho}),
\[
(\sig,\alp_{w,n})_n=\theta_{w,n}(\sig)=\iota_{n,s}^\dbU(\rho_{\dbZ/p^{n-s+1}}^w(\sig))=\iota_{n,s}(\eps_{w,\dbZ/p^{n-s+1}}(\sig)).
\qedhere
\]
\end{proof}

This and (\ref{description of pairing}) give:

\begin{cor}
\label{equality of the two pairings}
Let $w,w'$ be words in $X^*$ of lengths $1\leq s,s'\leq n$, respectively, with $w$ Lyndon.
Then
\[
(\tau_w^{p^{n-s}},\alp_{w',n})_n=\langle w,w'\rangle_n.
\]
\end{cor}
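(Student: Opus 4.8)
The plan is to combine the two characterizations of the pairing $\langle w,w'\rangle_n$ already established in the excerpt. The statement of Corollary \ref{equality of the two pairings} asserts that $(\tau_w^{p^{n-s}},\alp_{w',n})_n=\langle w,w'\rangle_n$, and both sides have just been expressed in terms of the same coefficient of the Magnus power series. First I would recall the preceding Lemma, applied with the word $w'$ in place of $w$ and with $\sig=\tau_w^{p^{n-s}}$: since $\tau_w\in S^{(s,0)}$, the power $\tau_w^{p^{n-s}}$ lies in $(S^{(s,0)})^{p^{n-s}}\leq S^{(n,p)}$, so it is a legitimate input to the pairing $(\cdot,\cdot)_n$. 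The Lemma then gives directly
\[
(\tau_w^{p^{n-s}},\alp_{w',n})_n=\iota_{n,s'}\bigl(\eps_{w',\dbZ/p^{n-s'+1}}(\tau_w^{p^{n-s}})\bigr).
\]

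Next I would invoke the definition (\ref{description of pairing}) of the pairing $\langle w,w'\rangle_n$, which reads exactly
\[
\langle w,w'\rangle_n=\iota_{n,s'}\bigl(\eps_{w',\dbZ/p^{n-s'+1}}(\tau_w^{p^{n-s}})\bigr).
\]
Comparing the two displayed right-hand sides, they are literally identical, so the two left-hand sides agree. This is the whole content of the corollary; the label ``This and (\ref{description of pairing}) give'' in the excerpt signals precisely that the proof is a one-line comparison of the Lemma's output with the defining formula.

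The only points requiring even a moment of care are bookkeeping ones: one must check that $\tau_w^{p^{n-s}}$ indeed lands in $S^{(n,p)}$ so that both the Lemma and the pairing are defined on it, and that the index $s'=|w'|$ used in $\iota_{n,s'}$ and in the coefficient ring $\dbZ/p^{n-s'+1}$ matches between the two formulas. Both of these are immediate from the conventions fixed earlier: $\tau_w\in S^{(s,0)}$ by the remark following Proposition \ref{the taus generate}, and the integrality $\eps_{w',\dbZ_p}(\tau_w^{p^{n-s}})\in p^{n-s'}\dbZ_p$ needed to make $\iota_{n,s'}$ applicable was verified by Proposition \ref{Koch}(b) in the discussion leading to (\ref{description of pairing}). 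There is no genuine obstacle here; the corollary is a formal consequence whose purpose is to identify the combinatorially defined matrix $\bigl(\langle w,w'\rangle_n\bigr)$ with the cohomological pairing matrix $\bigl((\tau_w^{p^{n-s}},\alp_{w',n})_n\bigr)$, thereby transferring the unipotent-upper-triangularity from the former (established via Proposition \ref{vanishing of iota}) to the latter, which is what the eventual basis argument (Theorem \ref{basis}) needs.
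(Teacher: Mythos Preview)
Your proposal is correct and follows exactly the paper's own argument: the corollary is obtained by applying the preceding Lemma with $\sig=\tau_w^{p^{n-s}}\in S^{(n,p)}$ and the word $w'$, and then comparing the resulting expression $\iota_{n,s'}(\eps_{w',\dbZ/p^{n-s'+1}}(\tau_w^{p^{n-s}}))$ with the defining formula~(\ref{description of pairing}). Your additional bookkeeping remarks (membership in $S^{(n,p)}$, matching of the index $s'$) are accurate and make explicit what the paper leaves implicit.
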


Proposition \ref{vanishing of iota}(a)(b) now gives:

\begin{cor}
\label{upper-triangular}
Let  $\Lyn_{\leq n}(X)$ be totally ordered by $\preceq$.
The matrix
\[
\Bigl((\tau_w^{p^{n-|w|}},\alp_{w',n})_n\Bigr),
\]
where $w,w'\in \Lyn_{\leq n}(X)$, is upper-triangular unipotent.
\end{cor}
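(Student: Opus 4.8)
The plan is to combine Corollary \ref{equality of the two pairings} with the triangularity results already established for the pairing $\langle w,w'\rangle_n$. By Corollary \ref{equality of the two pairings}, the $(w,w')$-entry of the matrix in question equals $\langle w,w'\rangle_n$ for Lyndon words $w,w'$ of length $\leq n$. So it suffices to show that the matrix $\bigl(\langle w,w'\rangle_n\bigr)_{w,w'\in\Lyn_{\leq n}(X)}$, with rows and columns indexed by $\preceq$, is upper-triangular unipotent. This is exactly what Proposition \ref{vanishing of iota}(a),(b) delivers: part (a) gives $\langle w,w'\rangle_n=0$ whenever $w'\prec w$, which is the vanishing below the diagonal, and part (b) gives $\langle w,w\rangle_n=1$, which puts $1$'s on the diagonal.

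First I would fix the total order $\preceq$ on $\Lyn_{\leq n}(X)$ and regard it as the common index set for both the rows (indexed by $w$) and the columns (indexed by $w'$). Then I would recall that, since each $w\in\Lyn_{\leq n}(X)$ is in particular a Lyndon word, the hypotheses of Corollary \ref{equality of the two pairings} and of Proposition \ref{vanishing of iota} are satisfied, with $s=|w|$ and $s'=|w'|$. Rewriting the matrix entry via Corollary \ref{equality of the two pairings} as $\langle w,w'\rangle_n$ reduces the whole claim to a statement purely about the combinatorial pairing.

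Next I would invoke Proposition \ref{vanishing of iota}(a): for $w'\prec w$ (both Lyndon), $\langle w,w'\rangle_n=0$. Under the convention that the matrix has $(w,w')$-entry in row $w$ and column $w'$, the entries strictly below the diagonal are precisely those with $w'\prec w$, so these all vanish; hence the matrix is upper-triangular. Then Proposition \ref{vanishing of iota}(b) gives $\langle w,w\rangle_n=1$ for every $w$, so the diagonal entries are all $1$. Since the ground ring here is the field $\dbZ/p$, an upper-triangular matrix with $1$'s on the diagonal is unipotent, which completes the argument.

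I do not anticipate a genuine obstacle: the statement is a direct bookkeeping consequence of two results proved earlier, and the only care needed is the orientation convention (ensuring that ``row index $w$, column index $w'$'' together with $w'\prec w$ indeed labels the strictly-lower-triangular part, so that part (a)'s vanishing sits below the diagonal rather than above it). If one instead adopted the opposite convention, the matrix would come out lower-triangular; matching the statement's wording of \textbf{upper}-triangular simply fixes which of $w,w'$ indexes rows. The substantive work has all been done in Proposition \ref{vanishing of iota} and Corollary \ref{equality of the two pairings}, so this final step is essentially a one-line deduction.
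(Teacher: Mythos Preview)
Your proposal is correct and follows exactly the paper's own approach: the paper derives Corollary \ref{upper-triangular} directly from Proposition \ref{vanishing of iota}(a)(b) together with Corollary \ref{equality of the two pairings}, precisely as you do. The only difference is that the paper states this as an immediate consequence in a single line, without the extended discussion of orientation conventions.
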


In general the above matrix need not be the identity matrix -- see e.g., Proposition \ref{uuu} below.
Next we observe the following general fact:

\begin{lem}
\label{linear algebra lemma}
Let $R$ be a commutative ring and let $(\cdot,\cdot)\colon A\times B\to R$ be a non-degenerate bilinear map of $R$-modules.
Let $(L,\leq)$ be a finite totally ordered set, and for every $w\in L$ let $a_w\in A$, $b_w\in B$.
Suppose that the matrix $\bigl((a_w,b_{w'})\bigr)_{w,w'\in L}$ is invertible, and that $a_w$, $w\in L$, generate $A$.
Then $a_w$, $w\in L$, is an $R$-linear basis of $A$, and $b_w$, $w\in L$, is an $R$-linear basis of $B$.
\end{lem}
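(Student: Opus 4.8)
The plan is to prove the two basis claims essentially in parallel, exploiting the non-degeneracy of the pairing together with the invertibility hypothesis on the matrix $M = \bigl((a_w,b_{w'})\bigr)_{w,w'\in L}$. Throughout let $L=\{w_1\nek w_k\}$. First I would establish that the $a_w$, $w\in L$, are $R$-linearly independent. Suppose $\sum_{w\in L}c_w a_w=0$ with $c_w\in R$. Pairing with any $b_{w'}$ and using bilinearity yields $\sum_{w}c_w(a_w,b_{w'})=0$ for every $w'\in L$. In matrix form this reads $c^{\top}M=0$ where $c=(c_w)_w$. Since $M$ is invertible over $R$, we conclude $c=0$, so the $a_w$ are linearly independent. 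Combined with the hypothesis that they generate $A$, this shows the $a_w$ form an $R$-linear basis of $A$.

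Next I would treat the $b_w$. Here the argument is the dual of the above but uses the non-degeneracy of the pairing, not merely its restriction to $L$. Suppose $\sum_{w'\in L}d_{w'} b_{w'}=0$. Pairing on the left with each $a_w$, $w\in L$, gives $\sum_{w'}(a_w,b_{w'})d_{w'}=0$, i.e.\ $Md=0$ with $d=(d_{w'})_{w'}$; invertibility of $M$ forces $d=0$, so the $b_w$ are linearly independent as well. The remaining point is that the $b_w$ \emph{span} $B$. Since we have already shown $a_w$, $w\in L$, is a basis of $A$, the module $A$ is free of rank $k=|L|$, and non-degeneracy gives an injection $B\hookrightarrow\Hom(A,R)$. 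I would argue that each $b_{w'}$ maps to the coordinate functional determined by its pairings with the basis $a_w$, and that the matrix $M$, being invertible, shows these images are precisely a basis of $\Hom(A,R)\isom R^k$; hence every element of $B$ has the same image as some $R$-combination of the $b_{w'}$, and injectivity then forces $B=\langle b_{w'}:w'\in L\rangle$.

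The step I expect to be the main obstacle is precisely this last spanning claim for $B$. Over a general commutative ring $R$ one must be careful: the statement ``$B\hookrightarrow\Hom(A,R)$ and the images of $b_{w'}$ are a basis of $\Hom(A,R)$, therefore the $b_{w'}$ span $B$'' is valid, but it relies on the injectivity of $B\to\Hom(A,R)$ (the right-kernel-triviality of the pairing) rather than on $A\to\Hom(B,R)$. Concretely: given $b\in B$, the functional $\varphi_b\colon a\mapsto(a,b)$ lies in $\Hom(A,R)$, which is free of rank $k$ with dual basis $a_w^{\vee}$; writing $\varphi_b=\sum_w \varphi_b(a_w)\,a_w^{\vee}$ and using that the $\varphi_{b_{w'}}=\sum_w (a_w,b_{w'})\,a_w^{\vee}$ form a basis of $\Hom(A,R)$ (their coordinate matrix is exactly $M^{\top}$, invertible), one solves $\varphi_b=\sum_{w'} e_{w'}\varphi_{b_{w'}}$ for suitable $e_{w'}\in R$. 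Then $\varphi_{b-\sum e_{w'}b_{w'}}=0$, and triviality of the right kernel gives $b=\sum_{w'}e_{w'}b_{w'}$, establishing the spanning. I would take care to invoke only the freeness of $A$ (now known) and the two kernel-triviality conditions packaged in non-degeneracy, so that no finiteness or field hypothesis on $R$ is secretly used beyond what is stated.
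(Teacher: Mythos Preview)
Your proof is correct and uses the same ingredients as the paper's. The paper packages the argument a bit more compactly and in the opposite order: it observes that for $b\in B$ the equation $b=\sum_{w'}r_{w'}b_{w'}$ is equivalent (since the $a_w$ generate $A$ and the right kernel is trivial) to the linear system $\sum_{w'}(a_w,b_{w'})r_{w'}=(a_w,b)$, $w\in L$, which by invertibility has a \emph{unique} solution---giving spanning and independence for $B$ in one stroke---and then reverses the roles of $a_w,b_w$ (now that the $b_{w'}$ are known to generate $B$) to obtain the basis statement for $A$.
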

\begin{proof}
Let $b\in B$, and consider $r_{w'}\in R$, with $w'\in L$.
The assumptions imply that $b=\sum_{w'}r_{w'}b_{w'}$ if and only if  $(a_w,b-\sum_{w'}r_{w'}b_{w'})=0$ for every $w$.
Equivalently, the $r_{w'}$ solve the linear system $\sum_{w'}(a_w,b_{w'})X_{w'}=(a_w,b)$, for $w\in L$.
By the invertibility, the latter system has a unique solution.
This shows that $b_w$, $w\in L$, is an $R$-linear basis of $B$.

By reversing the roles of $a_w,b_w$, we conclude that the $a_w$, $w\in L$, form an $R$-linear basis of $A$.
\end{proof}

\begin{thm}
\label{basis}
\begin{enumerate}
\item[(a)]
The cohomology elements $\alp_{w,n}$, where $w\in\Lyn_{\leq n}(X)$, form a $\dbZ/p$-linear basis of $H^2(S^{[n,p]},\dbZ/p)$.
\item[(b)]
When $X$ is finite, the cosets of the powers $\tau_w^{p^{n-s}}$, $w\in\Lyn_{\leq n}(X)$, form a basis of the $\dbZ/p$-module $S^{(n,p)}/S^{(n+1,p)}$.
\end{enumerate}
\end{thm}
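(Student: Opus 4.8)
The plan is to deduce both statements simultaneously from Lemma \ref{linear algebra lemma}, applied to the pairing $(\cdot,\cdot)_n$. First I would put $R=\dbZ/p$, take $A=S^{(n,p)}/S^{(n+1,p)}$ and $B=H^2(S^{[n,p]},\dbZ/p)$, and observe that $(\cdot,\cdot)_n$ descends to a non-degenerate pairing $A\times B\to\dbZ/p$: by construction its left kernel is exactly $S^{(n+1,p)}$ and its right kernel is trivial. As index set I take $L=\Lyn_{\leq n}(X)$, ordered by $\preceq$, and for $w\in L$ of length $s$ I set $a_w=\tau_w^{p^{n-s}}S^{(n+1,p)}\in A$ and $b_w=\alp_{w,n}\in B$.

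The finite case of (a) together with all of (b) then follows at once. Assuming $X$ finite, so that $L$ is finite, Corollary \ref{equality of the two pairings} gives $(a_w,b_{w'})_n=\langle w,w'\rangle_n$, and Corollary \ref{upper-triangular} says that the matrix $\bigl((a_w,b_{w'})_n\bigr)_{w,w'\in L}$ is unipotent upper-triangular, hence invertible over $\dbZ/p$. Theorem \ref{generators} supplies the remaining hypothesis that the $a_w$ generate $A$. Lemma \ref{linear algebra lemma} then yields that the $a_w$ form a basis of $A$ and the $b_w$ form a basis of $B$, which is exactly (b) and the finite case of (a).

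For infinite $X$ I would pass to the limit over the finite subsets $Y\subseteq X$. Writing $S=\invlim_Y S_Y$ gives $S^{[n,p]}=\invlim_Y S_Y^{[n,p]}$, and continuity of profinite cohomology with finite coefficients turns this into $H^2(S^{[n,p]},\dbZ/p)=\dirlim_Y H^2(S_Y^{[n,p]},\dbZ/p)$. The functoriality of $\Lam_R$ in $X$ (Section \ref{section on power series}), and hence of $\rho^w_R$ and $\bar\rho^w_R$, shows that for a word $w$ using only letters of $Y$ the class $\alp_{w,n}$ formed over $X$ is the inflation of the one formed over $Y$. Since every Lyndon word of length $\leq n$ uses only finitely many letters, the finite case shows that for finite $Y\subseteq Y'$ the transition map $H^2(S_Y^{[n,p]},\dbZ/p)\to H^2(S_{Y'}^{[n,p]},\dbZ/p)$ carries the basis indexed by $\Lyn_{\leq n}(Y)$ injectively into the basis indexed by $\Lyn_{\leq n}(Y')$; passing to the direct limit exhibits $\{\alp_{w,n}:w\in\Lyn_{\leq n}(X)\}$ as a basis of $H^2(S^{[n,p]},\dbZ/p)$.

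The genuinely nontrivial inputs---the unipotence of the pairing matrix (resting on the triangularity of Lyndon words, Proposition \ref{Lam(tau w)}), the generation in Theorem \ref{generators}, and the non-degeneracy of $(\cdot,\cdot)_n$---are all already in hand, so the finite case is little more than bookkeeping through Lemma \ref{linear algebra lemma}. The main point requiring care is the infinite case: I must check that $H^2$ really converts $S=\invlim_Y S_Y$ into a direct limit of cohomology groups and that inflation respects the classes $\alp_{w,n}$. Both reduce to the functoriality of the Magnus homomorphism and the continuity of profinite cohomology, and I expect this bookkeeping, rather than any deep new idea, to be the only remaining obstacle.
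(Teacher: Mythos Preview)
Your proposal is correct and follows essentially the same route as the paper: apply Lemma \ref{linear algebra lemma} to the induced pairing on $S^{(n,p)}/S^{(n+1,p)}\times H^2(S^{[n,p]},\dbZ/p)$, using Theorem \ref{generators} for generation and Corollary \ref{upper-triangular} for invertibility, and then pass to a limit for infinite $X$. The only difference is that the paper dismisses the infinite case with the phrase ``a standard limit argument,'' whereas you spell out that $S^{[n,p]}=\invlim_Y S_Y^{[n,p]}$, that $H^2$ with finite coefficients turns this into a direct limit, and that functoriality of $\Lam_R$ makes each $\alp_{w,n}$ the inflation of its counterpart over the finite alphabet containing the letters of $w$.
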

\begin{proof}
When $X$ is finite, the set $\Lyn_{\leq n}(X)$ is also finite.
By Theorem \ref{generators}, the cosets $a_w$ of $\tau_w^{p^{n-|w|}}$, where $w\in \Lyn_{\leq n}(X)$, generate the $\dbZ/p$-module $S^{(n,p)}/S^{(n+1,p)}$.
We apply Lemma \ref{linear algebra lemma} with the $\dbZ/p$-modules $A=S^{(n,p)}/S^{(n+1,p)}$ and $B=H^2(S^{[n,p]},\dbZ/p)$, the non-degenerate bilinear map $A\times B\to\dbZ/p$ induced by $(\cdot,\cdot)_n$, the generators $a_w$ of $A$, and the elements $b_w=\alp_{w,n}$ of $B$.

Corollary \ref{upper-triangular} implies that the matrix $(a_w,b_{w'})$ is invertible.
Therefore Lemma \ref{linear algebra lemma} gives both assertions in the finite case.

The general case of (a) follows from the finite case by a standard limit argument.
\end{proof}

We call $\alp_{w,n}$, $w\in\Lyn_{\leq n}(X)$, the {\bf Lyndon basis} of $H^2(S^{[n,p]},\dbZ/p)$.

Recall that the number of relations in a minimal presentation of a pro-$p$ group $G$ is given by $\dim H^2(G,\dbZ/p)$ \cite{NeukirchSchmidtWingberg}*{Cor.\ 3.9.5}.
In view of (\ref{size of Lyn}), Theorem \ref{basis} gives this number for $G=S^{[n,p]}$:

\begin{cor}
\label{dim of H2}
One has
\[
\dim_{\dbF_p}H^2(S^{[n,p]},\dbZ/p)=\dim_{\dbF_p}(S^{(n,p)}/S^{(n+1,p)})=\sum_{s=1}^n\varphi_s(|X|),
 \]
where $\varphi_s$ is the necklace map.
\end{cor}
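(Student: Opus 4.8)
The plan is to deduce both equalities directly from Theorem \ref{basis}, reading off each dimension as the cardinality of the single index set $\Lyn_{\leq n}(X)$, and then to evaluate that cardinality by means of the necklace count (\ref{size of Lyn}). This is essentially a bookkeeping argument; no serious difficulty is expected beyond the treatment of the infinite alphabet.

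First I would handle the right-hand equality for the two outer terms separately. By Theorem \ref{basis}(a) the Lyndon basis $\{\alp_{w,n}\}_{w\in\Lyn_{\leq n}(X)}$ is a $\dbZ/p$-linear basis of $H^2(S^{[n,p]},\dbZ/p)$, whence
\[
\dim_{\dbF_p}H^2(S^{[n,p]},\dbZ/p)=|\Lyn_{\leq n}(X)|.
\]
Likewise, when $X$ is finite, Theorem \ref{basis}(b) exhibits the cosets of the powers $\tau_w^{p^{n-|w|}}$, $w\in\Lyn_{\leq n}(X)$, as a basis of $S^{(n,p)}/S^{(n+1,p)}$, so this quotient has the same dimension $|\Lyn_{\leq n}(X)|$. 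Thus both outer terms are governed by one counting problem.

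Next I would evaluate $|\Lyn_{\leq n}(X)|$. Since $\Lyn_{\leq n}(X)$ is the disjoint union of the sets $\Lyn_s(X)$ for $1\leq s\leq n$, the identity (\ref{size of Lyn}) gives
\[
|\Lyn_{\leq n}(X)|=\sum_{s=1}^n|\Lyn_s(X)|=\sum_{s=1}^n\varphi_s(|X|),
\]
which is precisely the claimed value. Combining this with the previous step yields both equalities in the finite case.

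The only remaining point is the infinite case, which I regard as the sole (minor) obstacle. Here $\varphi_1(|X|)=\varphi_1(\infty)=\infty$, so the right-hand side is infinite, and I would check that both outer terms are infinite as well. The cohomology side is immediate, since Theorem \ref{basis}(a) is valid for arbitrary $X$. For $S^{(n,p)}/S^{(n+1,p)}$ I would observe that the length-one words already contribute infinitely many linearly independent classes: by Corollary \ref{equality of the two pairings} together with Proposition \ref{vanishing of iota}(b),(c), the cosets of $\tau_{(x)}^{p^{n-1}}$, $x\in X$, pair against the $\alp_{(x),n}$ under $(\cdot,\cdot)_n$ by the identity matrix, so no nontrivial finite combination can lie in $S^{(n+1,p)}$. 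Hence both outer terms equal $\infty$, matching the right-hand side, and the chain of equalities holds in general.
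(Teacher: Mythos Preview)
Your proof is correct and follows the same approach as the paper: the corollary is stated there without an explicit proof, being an immediate consequence of Theorem \ref{basis} together with the Lyndon-word count (\ref{size of Lyn}). Your treatment of the infinite case, where Theorem \ref{basis}(b) is not available, fills in a detail the paper leaves implicit; your argument via the pairing values $(\tau_{(x)}^{p^{n-1}},\alp_{(y),n})_n=\delta_{x,y}$ is a clean way to see that $S^{(n,p)}/S^{(n+1,p)}$ is infinite-dimensional when $X$ is infinite (one could also argue directly from the non-degeneracy of $(\cdot,\cdot)_n$, since the injection $H^2(S^{[n,p]},\dbZ/p)\hookrightarrow\Hom(S^{(n,p)}/S^{(n+1,p)},\dbZ/p)$ forces the latter to be infinite).
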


\section{The shuffle relations}
We recall the following constructions from \cite{ChenFoxLyndon58},  \cite{Reutenauer93}*{pp.\ 134--135}.
Let $u_1\nek u_t\in X^*$ be words of lengths $s_1\nek s_t$, respectively.
We say that a word $w\in  X^*$ of length $1\leq n\leq s_1+\cdots+s_t$ is an {\bf infiltration} of $u_1\nek u_t$, if there exist sets $I_1\nek I_t$ of respective cardinalities $s_1\nek s_t$ such that $\{1,2\nek n\}=I_1\cup\cdots \cup I_t$ and the restriction of $w$ to the index set $I_j$ is $u_j$, $j=1,2\nek t$.
We then write $w=w(I_1\nek I_t,u_1\nek u_t)$.
We write $\Infil(u_1\nek u_t)$ for the set of all infiltrations of $u_1\nek u_t$.
The {\bf  infiltration product}  $u_1\downarrow\cdots\downarrow u_t$ of $u_1\nek u_t$ is the polynomial $\sum w$ in $\dbZ\langle X\rangle$, where the sum is over all such infiltrations, taken with multiplicity.

If in the above setting, the sets $I_1\nek I_t$ are pairwise disjoint, then $w(I_1\nek I_t,u_1\nek u_t)$ is called a {\bf shuffle} of $u_1\nek u_t$.
We write  $\Sh(u_1\nek u_t)$ for the set of all shuffles of $u_1\nek u_t$.
It consists of the words in $\Infil(u_1\nek u_t)$ of length $s_1+\cdots+s_t$.
The {\bf  shuffle product}  $u_1\sha\cdots\sha u_t$ is the polynomial $\sum w(I_1\nek I_t,u_1\nek u_t)$ in $\dbZ\langle X\rangle$, where the sum is over all shuffles of $u_1\nek u_t$, taken with multiplicity.
Thus $u_1\sha\cdots\sha u_t$ is the homogenous part of  $u_1\downarrow\cdots\downarrow u_t$ of (maximal) degree $s_1+\cdots+s_t$.
For instance
\[
\begin{split}
(xy)\downarrow(xz)&=(xyxz)+2(xxyz)+2(xxzy)+(xzxy)+(xyz)+(xzy), \quad \\
(xy)\sha(xz)&=(xyxz)+2(xxyz)+2(xxzy)+(xzxy)\\
(x)\downarrow(x)&=2(xx)+(x), \quad (x)\sha(x)=2(xx).
\end{split}
\]

We may view infiltration and shuffle products also as elements of $\dbZ_p\langle X\rangle$.
Let $\Shuffles(X)$ be the $\dbZ$-submodule of $\dbZ\langle X\rangle$ generated by all shuffle products $u\sha v$, with $\emptyset \neq u,v\in X^*$.
Let $\Shuffles_n(X)$ be its homogenous component of degree $n$.

\begin{examples}
\label{Shuffles for small n}
$\Shuffles_1(X)=\{0\}$,
\[
\begin{split}
\Shuffles_2(X)&=\langle (xy)+(yx)\ |\ x,y\in X\rangle,\\
\Shuffles_3(X)&=\langle(xyz)+(xzy)+(zxy)\ |\ x,y,z\in X\rangle.
\end{split}
\]
\end{examples}

Let $(\cdot,\cdot)$ be the pairing of (\ref{bilinear}) for the ring $R=\dbZ_p$.
As before, $S=S_X$ is the free profinite group on the set $X$.
The following fact is due to Chen, Fox, and Lyndon in discrete case \cite{ChenFoxLyndon58}*{Th.\ 3.6} (see also \cite{Morishita12}*{Prop.\ 8.6}, \cite{Reutenauer93}*{Lemma 6.7}), as well as \cite{Vogel05}*{Prop.\ 2.25} in the profinite case.

\begin{prop}
\label{CFL}
For every $\emptyset\neq u,v\in X^*$ and every $\sig\in S$ one has
\[
\eps_{u,\dbZ_p}(\sig)\eps_{v,\dbZ_p}(\sig)=(\Lam_{\dbZ_p}(\sig),u\downarrow v).
\]
\end{prop}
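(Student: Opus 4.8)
The plan is to realize the infiltration product as the dual of a comultiplication on $\dbZ_p\langle\langle X\rangle\rangle$ with respect to which the Magnus image $\Lam_{\dbZ_p}(\sig)$ is \emph{group-like}. Let $\Delta\colon\dbZ_p\langle\langle X\rangle\rangle\to\dbZ_p\langle\langle X\rangle\rangle\otimes\dbZ_p\langle\langle X\rangle\rangle$ (into the completed tensor product) be the unique continuous $\dbZ_p$-algebra homomorphism determined on generators by $\Delta(x)=x\otimes1+1\otimes x+x\otimes x$; this is well defined because $\Delta$ does not lower total degree, so it preserves the defining filtration. The first step is to show that $\Lam_{\dbZ_p}(\sig)$ is group-like, i.e.
\[
\Delta(\Lam_{\dbZ_p}(\sig))=\Lam_{\dbZ_p}(\sig)\otimes\Lam_{\dbZ_p}(\sig)\quad\hbox{for all }\sig\in S.
\]
Both $\sig\mapsto\Delta(\Lam_{\dbZ_p}(\sig))$ and $\sig\mapsto\Lam_{\dbZ_p}(\sig)\otimes\Lam_{\dbZ_p}(\sig)$ are continuous homomorphisms from $S$ into the unit group of the completed tensor algebra, and they agree on each generator $x\in X$ since $\Delta(1+x)=(1+x)\otimes(1+x)$. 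By the universal property of the free profinite group $S$ they therefore coincide.

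The key combinatorial step is to identify the pairing (\ref{bilinear}) with the infiltration product: for fixed words $u,v\in X^*$ and any $f\in\dbZ_p\langle\langle X\rangle\rangle$,
\[
(f,u\downarrow v)=\sum_{w\in X^*}f_w\cdot(\hbox{coefficient of }u\otimes v\hbox{ in }\Delta(w)).
\]
To prove this I would expand $\Delta$ on a word $w=(a_1\cdots a_m)$ as the product $\prod_{i=1}^m(a_i\otimes1+1\otimes a_i+a_i\otimes a_i)$. A monomial contributing to the coefficient of $u\otimes v$ amounts to choosing, for each position $i$, whether to place $a_i$ in the left factor, the right factor, or both; such a choice is exactly a pair of index sets $I_1,I_2\subseteq\{1\nek m\}$ with $I_1\cup I_2=\{1\nek m\}$, and it produces $u\otimes v$ precisely when the restriction of $w$ to $I_j$ equals the prescribed word. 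Counting these, with multiplicity, is by definition the infiltration multiplicity $(u\downarrow v)_w$, which gives the displayed identity. This is essentially the Chen--Fox--Lyndon computation, and since $u\downarrow v$ is a polynomial of degree $\leq|u|+|v|$, only the bidegree $(|u|,|v|)$ part of $\Delta$ is relevant, so all sums are finite and no convergence issue arises.

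Finally I would combine the two steps by pairing the group-like identity with $u\otimes v$. On the right, the coefficient of $u\otimes v$ in $\Lam_{\dbZ_p}(\sig)\otimes\Lam_{\dbZ_p}(\sig)$ factors as the product of the coefficient of $u$ and the coefficient of $v$, namely $\eps_{u,\dbZ_p}(\sig)\eps_{v,\dbZ_p}(\sig)$. On the left, the combinatorial step applied to $f=\Lam_{\dbZ_p}(\sig)$ identifies the coefficient of $u\otimes v$ in $\Delta(\Lam_{\dbZ_p}(\sig))$ with $(\Lam_{\dbZ_p}(\sig),u\downarrow v)$. Equating the two yields the assertion. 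I expect the main obstacle to be the bookkeeping in the combinatorial step: matching the three-fold choice at each letter of $w$ with the pairs $(I_1,I_2)$ that define infiltrations, and verifying that the resulting count reproduces exactly the multiplicity $(u\downarrow v)_w$ (rather than, say, the shuffle multiplicity, which would correspond to dropping the $a_i\otimes a_i$ term). Everything else is formal, via the universal property and continuity.
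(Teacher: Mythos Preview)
Your argument is correct. The paper itself does not give a proof of this proposition: it simply attributes the discrete case to Chen--Fox--Lyndon (with further references to Morishita and Reutenauer) and the profinite case to Vogel. Your coalgebra approach---showing that $\Lam_{\dbZ_p}(\sig)$ is group-like for the comultiplication $\Delta(x)=x\otimes1+1\otimes x+x\otimes x$ and identifying the coefficient of $u\otimes v$ in $\Delta(w)$ with the infiltration multiplicity $(u\downarrow v)_w$---is one of the standard routes to this identity, and is essentially the method in Reutenauer's treatment. The combinatorial bookkeeping you flag as the main obstacle is exactly right: the three-term expansion at each letter corresponds bijectively to ordered pairs $(I_1,I_2)$ with $I_1\cup I_2=\{1,\ldots,|w|\}$, and the constraint $|I_j|=|u_j|$ is automatic once $w|_{I_j}=u_j$. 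One small point worth making explicit is that the sum $\sum_w f_w\cdot(\text{coeff.\ of }u\otimes v\text{ in }\Delta(w))$ is finite since only words $w$ with $|w|\le|u|+|v|$ contribute, so the interchange needed to pass from this to the coefficient of $u\otimes v$ in $\Delta(f)$ is harmless.
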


\begin{cor}
\label{Lam and shuffles}
Let $u,v$ be nonempty words in $X^*$ with $s=|u|+|v|\leq n$.
For every $\sig\in S^{(n,p)}$ one has $(\Lam_{\dbZ_p}(\sig),u\sha v)\in p^{n-s+1}\dbZ_p$.
\end{cor}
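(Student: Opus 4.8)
The plan is to relate the shuffle product $u\sha v$ to the infiltration product $u\downarrow v$ and then apply Proposition \ref{CFL} together with the divisibility information supplied by Proposition \ref{Koch}(b). The key algebraic observation is that $u\downarrow v$ and $u\sha v$ differ only by terms of strictly smaller degree: by definition, $u\sha v$ is the homogeneous component of $u\downarrow v$ of top degree $s=|u|+|v|$, so $u\downarrow v = u\sha v + (\text{lower-degree terms})$, where each lower-degree term is a word of length $<s$.

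First I would write, using Proposition \ref{CFL}, the identity
\[
\eps_{u,\dbZ_p}(\sig)\eps_{v,\dbZ_p}(\sig)=(\Lam_{\dbZ_p}(\sig),u\downarrow v)
=(\Lam_{\dbZ_p}(\sig),u\sha v)+\sum_{|w|<s}c_w\,\eps_{w,\dbZ_p}(\sig),
\]
for suitable integer coefficients $c_w$ coming from the lower-degree part of the infiltration product. The strategy is then to show that both the left-hand side and the correction sum lie in $p^{n-s+1}\dbZ_p$, which forces $(\Lam_{\dbZ_p}(\sig),u\sha v)$ into the same ideal. For the left-hand side, since $\sig\in S^{(n,p)}$ and $|u|,|v|\geq1$ with $|u|+|v|=s\leq n$, Proposition \ref{Koch}(b) gives $\eps_{u,\dbZ_p}(\sig)\in p^{n-|u|}\dbZ_p$ and $\eps_{v,\dbZ_p}(\sig)\in p^{n-|v|}\dbZ_p$, so their product lies in $p^{(n-|u|)+(n-|v|)}\dbZ_p=p^{2n-s}\dbZ_p$; since $s\leq n$ one has $2n-s\geq n\geq n-s+1$, so the product is in $p^{n-s+1}\dbZ_p$.

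For the correction sum, each word $w$ appearing there has length $1\leq|w|<s\leq n$, so Proposition \ref{Koch}(b) applies again to give $\eps_{w,\dbZ_p}(\sig)\in p^{n-|w|}\dbZ_p$. Since $|w|\leq s-1$, we have $n-|w|\geq n-s+1$, so every term of the sum lies in $p^{n-s+1}\dbZ_p$, hence so does the whole sum. Combining the two estimates yields $(\Lam_{\dbZ_p}(\sig),u\sha v)\in p^{n-s+1}\dbZ_p$, as claimed. The only mild subtlety — and the step I would watch most carefully — is ensuring that every lower-degree infiltration word $w$ genuinely has $|w|\geq1$ (so that Proposition \ref{Koch}(b) applies and there is no stray constant term); this holds because $u,v$ are nonempty, so every infiltration has length at least $\max(|u|,|v|)\geq1$. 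Everything else is a routine bookkeeping of $p$-adic valuations.
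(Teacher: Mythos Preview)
Your proof is correct and follows essentially the same route as the paper's: both use Proposition~\ref{CFL} to express $(\Lam_{\dbZ_p}(\sig),u\sha v)$ as $\eps_{u,\dbZ_p}(\sig)\eps_{v,\dbZ_p}(\sig)$ minus the contribution of the lower-degree infiltration terms, and then invoke Proposition~\ref{Koch}(b) on $u$, $v$, and on each $w\in\Infil(u,v)\setminus\Sh(u,v)$ to place everything in $p^{n-s+1}\dbZ_p$. The only cosmetic difference is that you bound the product via $p^{(n-|u|)+(n-|v|)}=p^{2n-s}$, whereas the paper simply notes that $|u|,|v|<s$ so each factor already lies in $p^{n-s+1}\dbZ_p$; either estimate suffices.
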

\begin{proof}
If $w$ is a nonempty word of length $|w|<s$, then by Proposition \ref{Koch}(b),
$\eps_{w,\dbZ_p}(\sig)\in p^{n-|w|}\dbZ_p\subseteq p^{n-s+1}\dbZ_p$.
In particular, this is the case for $w=u$, $w=v$, and when $w\in\Infil(u,v)\setminus\Sh(u,v)$.
It follows from Proposition \ref{CFL} that $(\Lam_{\dbZ_p}(\sig),u\sha v)\in p^{n-s+1}\dbZ_p$.
\end{proof}

We obtain the following {\bf shuffle relations} (see also \cite{VogelThesis}*{Cor.\ 1.2.10} and \cite{FennSjerve84}*{Th.\ 6.8}).
We write $X^s$ for the set of words in $X^*$ of length $s$.

\begin{thm}
\label{shuffle relations}
For every $\emptyset\neq  u,v\in X^*$ with $s=|u|+|v|\leq n$ one has
\[
\sum_{w\in X^s}(u\sha v)_w\alp_{w,n}=0.
\]
\end{thm}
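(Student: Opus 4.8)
The plan is to exploit the non-degeneracy on the right of the pairing $(\cdot,\cdot)_n\colon S^{(n,p)}\times H^2(S^{[n,p]},\dbZ/p)\to\dbZ/p$: since its right kernel is trivial, it suffices to verify that the proposed element $\eta=\sum_{w\in X^s}(u\sha v)_w\alp_{w,n}$ pairs to $0$ against every $\sig\in S^{(n,p)}$. Note first that the sum is finite, since $(u\sha v)_w\neq0$ only for the finitely many shuffles $w$ of $u$ and $v$, so no convergence issue arises even when $X$ is infinite. By bilinearity, $(\sig,\eta)_n=\sum_{w\in X^s}(u\sha v)_w(\sig,\alp_{w,n})_n$, and I would invoke the Lemma preceding Corollary \ref{equality of the two pairings} to rewrite each term as $(\sig,\alp_{w,n})_n=\iota_{n,s}(\eps_{w,\dbZ/p^{n-s+1}}(\sig))$.

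The crux is to recognize this $\dbZ/p$-valued sum as the mod-$p$ reduction of the integral pairing (\ref{bilinear}) of $\Lam_{\dbZ_p}(\sig)$ against $u\sha v$. Since $u\sha v$ is homogeneous of degree $s$, only words of length $s$ contribute, giving $(\Lam_{\dbZ_p}(\sig),u\sha v)=\sum_{w\in X^s}(u\sha v)_w\eps_{w,\dbZ_p}(\sig)$ in $\dbZ_p$. For $\sig\in S^{(n,p)}$ and $|w|=s$, Proposition \ref{Koch}(b) gives $\eps_{w,\dbZ_p}(\sig)\in p^{n-s}\dbZ_p$, so I would write $\eps_{w,\dbZ_p}(\sig)=p^{n-s}c_w$ with $c_w\in\dbZ_p$; unwinding the definition of $\iota_{n,s}$ then shows that $\iota_{n,s}(\eps_{w,\dbZ/p^{n-s+1}}(\sig))$ is exactly the class of $c_w$ modulo $p$.

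Finally I would apply Corollary \ref{Lam and shuffles}, which gives $(\Lam_{\dbZ_p}(\sig),u\sha v)\in p^{n-s+1}\dbZ_p$. Dividing the displayed identity by $p^{n-s}$ (a non-zerodivisor in $\dbZ_p$) yields $\sum_{w\in X^s}(u\sha v)_wc_w\in p\dbZ_p$, hence this integer combination vanishes modulo $p$; since the integer coefficients $(u\sha v)_w$ act on $\dbZ/p$ by reduction, this is precisely $(\sig,\eta)_n=0$. As $\sig$ was arbitrary, the triviality of the right kernel of $(\cdot,\cdot)_n$ forces $\eta=0$, which is the asserted shuffle relation. The one point requiring care — the main obstacle — is the bookkeeping across the three coefficient rings $\dbZ_p$, $\dbZ/p^{n-s+1}$, and $\dbZ/p$: one must confirm that $\iota_{n,s}$ intertwines correctly with reduction, so that dividing the $\dbZ_p$-valued pairing by $p^{n-s}$ and reducing mod $p$ agrees with the $\dbZ/p$-valued expression produced by the Lemma.
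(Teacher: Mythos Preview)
Your proof is correct and follows essentially the same approach as the paper: pair the element against an arbitrary $\sig\in S^{(n,p)}$, use the Lemma giving $(\sig,\alp_{w,n})_n=\iota_{n,s}(\eps_{w,\dbZ/p^{n-s+1}}(\sig))$, apply Corollary~\ref{Lam and shuffles} to see the $\dbZ_p$-valued sum lies in $p^{n-s+1}\dbZ_p$, and conclude via the trivial right kernel. The only cosmetic difference is that the paper passes the sum through the additive map $\iota_{n,s}$ directly, whereas you unpack this as ``divide by $p^{n-s}$ in $\dbZ_p$ and reduce mod $p$''; these are the same computation.
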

\begin{proof}
For  $\sig\in S^{(n,p)}$, Corollary \ref{Lam and shuffles} gives
\[
\sum_{w\in X^s}(u\sha v)_w\eps_{w,\dbZ_p}(\sig)=\sum_{w\in X^*}(u\sha v)_w\eps_{w,\dbZ_p}(\sig)=(\Lam_{\dbZ_p}(\sig),u\sha v)\in p^{n-s+1}\dbZ_p.
\]
Therefore, by Lemma \ref{description of pairing 2},
\[
\begin{split}
(\sig,\sum_{w\in X^s}(u\sha v)_w\alp_{w,n})_n&=\sum_{w\in X^s}(u\sha v)_w(\sig,\alp_{w,n})_n
=\sum_{w\in X^s}(u\sha v)_w\,\iota_{n,s}(\eps_{w,\dbZ/p^{n-s+1}}(\sig))\\
&=\iota_{n,s}\Bigl(\sum_{w\in X^s}(u\sha v)_w\eps_{w,\dbZ/p^{n-s+1}}(\sig)\Bigr)=0.
\end{split}
\]
Now use the fact that  $(\cdot,\cdot)_n\colon S^{(n,p)}\times H^2(S^{[n,p]},\dbZ/p)\to\dbZ/p$ has a trivial right kernel.
\end{proof}

\begin{cor}
\label{structure theorem}
There is a canonical epimorphism
\[
\begin{split}
\bigoplus_{s=1}^n\Bigl(\bigl(\bigoplus_{w\in X^s} \dbZ\bigr)/&\Shuffles_s(X)\Bigr)\tensor(\dbZ/p)\to H^2(S^{[n,p]},\dbZ/p) \\
& (\bar r_w)_w\mapsto\sum_w r_w\alp_{w,n}.
\end{split}
\]
\end{cor}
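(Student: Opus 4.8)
The plan is to construct the map as a composite of elementary steps and then read off surjectivity from Theorem \ref{basis}. For each $1\le s\le n$ I would set $F_s=\bigoplus_{w\in X^s}\dbZ$ and define a homomorphism of abelian groups $\psi_s\colon F_s\to H^2(S^{[n,p]},\dbZ/p)$ by $w\mapsto\alp_{w,n}$ for $w\in X^s$, extended $\dbZ$-linearly. Since the target is a $\dbZ/p$-vector space, $\psi_s$ automatically annihilates $pF_s$.

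I would next check that $\psi_s$ also annihilates $\Shuffles_s(X)$. Every shuffle product $u\sha v$ is homogeneous of degree $|u|+|v|$, so the submodule $\Shuffles(X)\subseteq\dbZ\langle X\rangle$ is graded, and its degree-$s$ component $\Shuffles_s(X)$ is generated by the products $u\sha v$ with $u,v\in X^*$ nonempty and $|u|+|v|=s$. Writing such a generator as $u\sha v=\sum_{w\in X^s}(u\sha v)_w\,w$ in $F_s$, Theorem \ref{shuffle relations} gives precisely
\[
\psi_s(u\sha v)=\sum_{w\in X^s}(u\sha v)_w\alp_{w,n}=0.
\]
Hence $\psi_s$ kills $\Shuffles_s(X)+pF_s$ and therefore factors through $M_s\tensor(\dbZ/p)$, where $M_s=F_s/\Shuffles_s(X)$. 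Taking the direct sum over $1\le s\le n$ yields a homomorphism $\bigoplus_{s=1}^n M_s\tensor(\dbZ/p)\to H^2(S^{[n,p]},\dbZ/p)$ with the asserted formula $(\bar r_w)_w\mapsto\sum_w r_w\alp_{w,n}$; being built functorially out of the classes $\alp_{w,n}$ and the shuffle relations, it is canonical.

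Finally, surjectivity follows from Theorem \ref{basis}(a): the Lyndon words of length $\le n$ form a subset of $\bigcup_{s=1}^n X^s$, and their classes $\alp_{w,n}$ already form a $\dbZ/p$-basis of $H^2(S^{[n,p]},\dbZ/p)$; since these classes lie in the image, the map is onto. I expect no genuine obstacle here, as the two ingredients (spanning from Theorem \ref{basis}, vanishing from Theorem \ref{shuffle relations}) do all the work. The only point deserving a line of care is the identification of the generators of $\Shuffles_s(X)$ with the homogeneous shuffle products of total degree exactly $s$ (with Examples \ref{Shuffles for small n} serving as a sanity check for small $s$), which is what matches the relation submodule one-to-one with the instances of Theorem \ref{shuffle relations}.
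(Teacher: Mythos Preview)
Your argument is correct and follows essentially the same route as the paper's proof: well-definedness from Theorem~\ref{shuffle relations} and surjectivity from Theorem~\ref{basis}(a). The paper states these two points in one line each, while you spell out the intermediate details (factoring through $pF_s$ and identifying the generators of $\Shuffles_s(X)$), but there is no substantive difference.
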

\begin{proof}
By Theorem \ref{shuffle relations} this homomorphism is well defined.
By Theorem \ref{basis}(a), it is surjective.
\end{proof}

\begin{rem}
\rm
In view of Lemma \ref{description of pairing 2}, the epimorphism of Corollary \ref{structure theorem}  and the canonical pairing $(\cdot,\cdot)_n$ induce a bilinear map
\[
\begin{split}
S^{(n,p)} &\times \bigoplus_{s=1}^n\Bigl(\bigl(\bigoplus_{w\in X^s}\dbZ\bigr)/\Shuffles_s(X)\Bigr)\to \dbZ/p, \quad \\
&(\sig,\overline{(r_w)}_w)=\sum_wr_w\iota_{n,s}(\eps_{w,\dbZ/p^{n-s+1}}(\sig))
\end{split}
\]
with left kernel $S^{(n+1,p)}$.
\end{rem}

\begin{exam}
\label{example}
\rm
We show that for every $x_1,x_2\nek x_k\in X$ one has
\[
(x_1x_2\cdots x_k)+(-1)^{k-1}(x_k\cdots x_2x_1)\in\Shuffles_n(X).
\]
We may assume that $x_1,x_2\nek x_k$ are distinct.
For $1\leq l\leq k-1$ let $u_l=(x_l\cdots x_2x_1)$ and $v_l=(x_{l+1}\cdots x_k)$.
We consider the polynomial $\sum_{l=1}^{k-1}(-1)^{l-1}u_l\sha v_l$ in $\dbZ\langle X\rangle$.
It is homogenous of degree $k$.
If $w\in\Sh(u_l,v_l)$, then either:
\begin{enumerate}
\item
$x_l$ appears before $x_{l+1}$ in $w$, and then $w$ appears with an opposite sign also in $\Sh(u_{l-1},v_{l-1})$; or
\item
$x_{l+1}$ appears before $x_l$ in $w$, and then $w$ appears with opposite sign also in $\Sh(u_{l+1},v_{l+1})$.
\end{enumerate}
The only exceptions are  $w=(x_1x_2\cdots x_k)\in\Sh(u_1,v_1)$ and $w=(x_k\cdots x_2x_1)\in \Sh(u_{k-1},v_{k-1})$.
This shows that
\[
(x_1x_2\cdots x_k)+(-1)^k(x_k\cdots x_2x_1)=\sum_{l=1}^{k-1}(-1)^{l-1}u_l\sha v_l.
\]
For $1\leq k\leq n$ Corollary \ref{structure theorem} therefore implies that
\[
\alp_{(x_1x_2\cdots x_k),n}=(-1)^{k-1}\alp_{(x_k\cdots x_2x_1),n}.
\]
\end{exam}

\section{Example: The case $n=2$}
\label{section on n=2}
Our results in this case are fairly well known, and are brought here in order to illustrate the general theory.

As before let $S=S_X$ with $X$ totally ordered.
Here $S^{(2,p)}=S^p[S,S]$ and $\bar S=S^{[2,p]}$ is the maximal elementary $p$-abelian quotient of $S$.
We may identify $H^1(S,\dbZ/p)=H^1(\bar S,\dbZ/p)\isom\bigoplus_{x\in X}\dbZ/p$.
Let $\chi_{x,\dbZ/p}=\eps_{(x),\dbZ/p}$, $x\in X$, be the basis of $H^1(S,\dbZ/p)$ dual to $X$ (see Remark \ref{dual basis}).

The Lyndon words $w$ of length $\leq2$ are $(x)$, where $x\in X$, and $(xy)$, where $x,y\in X$ and $x<y$.
For these words we have $\tau_{(x)}=x$ and $\tau_{(xy)}=[x,y]$.
By Examples \ref{examples of  alp}(1)(3),
\[
\alp_{(x),2}=\Bock_{p,\bar S}(\chi_{x,\dbZ/p}), \quad \alp_{(xy),2}=\chi_{x,\dbZ/p}\cup\chi_{y,\dbZ/p}.
\]
Hence, by Theorem \ref{basis}, $\Bock_{p,\bar S}(\chi_{x,\dbZ/p})$ and $\chi_{x,\dbZ/p}\cup\chi_{y,\dbZ/p}$, where $x<y$, form a $\dbZ/p$-linear basis of $H^2(\bar S,\dbZ/p)$.
Furthermore, when $X$ is finite, the elements of the form $x^p$ and $[x,y]$ with $x,y\in X$, $x<y$, form a $\dbZ/p$-linear basis of $S^{(2,p)}/S^{(3,p)}$.
In view of Examples \ref{Shuffles for small n} and Corollary \ref{structure theorem}, the map $(\bar r_w)\mapsto\sum_w r_w\alp_{w,2}$ induces an epimorphism
\[
\bigoplus_{x\in X}\dbZ/p\oplus\Bigl(\bigl(\bigoplus_{x,y\in X}\dbZ\bigr)/\langle (xy)+(yx)\ |\ x,y\in X\rangle\Bigr)\tensor(\dbZ/p)\to H^2(\bar S,\dbZ/p).
\]
It coincides with the map
\[
H^1(\bar S,\dbZ/p)\oplus{\textstyle\bigwedge^2H^1(\bar S,\dbZ/p)}\to H^2(\bar S,\dbZ/p)
\]
which is $\Bock_{p,\bar S}$ on the first component and $\cup$ on the second component.
When $p\neq2$ the direct sum is a free $\dbZ/p$-module on $\Lyn_{\leq2}(X)$, and by comparing dimensions we see that the epimorphism is in fact an isomorphism (compare \cite{EfratMinac11}*{Cor.\ 2.9(a)}).
However when $p=2$ one has $\Bock_{2,\bar S}(\chi)=\chi\cup\chi$ \cite{EfratMinac11}*{Lemma 2.4}, so the above epimorphism is not injective.

Next, Proposition \ref{vanishing of iota} shows that the matrix $(\langle w,w'\rangle_2)$, where $w,w'\in\Lyn_{\leq2}(X)$, is the identity matrix.
In view of Corollary \ref{equality of the two pairings}, it coincides with the matrix $\bigl((\tau_w^{p^{2-|w|}},\alp_{w',2})_2\bigr)$.
Thus
\[
\begin{split}
&(x^p,\Bock_{p,\bar S}(\chi_{x,\dbZ/p}))_2=1\hbox{ for every } x\in X ,\\
&(x^p,\Bock_{p,\bar S}(\chi_{y,\dbZ/p}))_2=0 \hbox{ for every } x,y\in X, x\neq y,\\
&(x^p,\chi_{y,\dbZ/p}\cup\chi_{z,\dbZ/p})_2=0 \hbox{ for every } x,y,z\in X,\\
&([x,y],\Bock_{p,\bar S}(\chi_{z,\dbZ/p}))_2=0 \hbox{ for every } x,y,z\in X,\\
&([x,y],\chi_{z,\dbZ/p}\cup\chi_{t,\dbZ/p})_2=0 \hbox{ for every } x,y,z,t\in X, (xy)\neq(zt),(tz),\\
&([x,y],\chi_{x,\dbZ/p}\cup\chi_{y,\dbZ/p})_2=1 \hbox{ for every } x,y\in X \hbox{ with } x<y.
\end{split}
\]
This recovers well known facts from \cite{Labute66}*{\S2.3},
\cite{Koch02}*{\S7.8} and \cite{NeukirchSchmidtWingberg}*{Th.\ 3.9.13 and  Prop.\ 3.9.14}

\section{Example: The case $n=3$.}
\label{section on n=3}
Here $S^{(3,p)}=S^{p^2}[S,S]^p[S,[S,S]]$.
We abbreviate $\bar S=S^{[3,p]}$.
Recall that $\Lyn_{\leq3}(X)$ consists of the words
\[
\begin{split}
&(x) \hbox{ for } x\in X,\\
& (xy), (xxy), (xyy) \hbox{ for } x,y\in X \hbox{ with } x<y, \\
&  (xyz), (xzy) \hbox{  for } x,y,z\in X \hbox{ with }  x<y<z.
\end{split}
\]
For these words
\[
\begin{split}
\tau_{(x)}=x, \quad
&\tau_{(xy)}=[x,y],\quad
\tau_{(xxy)}=[x,[x,y]], \quad
\tau_{(xyy)}=[[x,y],y],\quad \\
&\tau_{(xyz)}=[x,[y,z]], \quad
\tau_{(xzy)}=[[x,z],y].
\end{split}
\]

By Theorem \ref{generators}, the cosets of
\[
x^{p^3},\  [x,y]^p,\  [x,[x,y]],\ [[x,y],y],\ [x,[y,z]],\ [[x,z],y],
\]
with $x,y,z$ as above, generate $S^{(3,p)}/S^{(4,p)}$.
When $X$ is finite, they form a linear basis of  $S^{(3,p)}/S^{(4,p)}$ over $\dbZ/p$ (Theorem \ref{basis}(b)).
Furthermore,  Theorem \ref{basis}(a) gives:

\begin{thm}
The following cohomology elements form a $\dbZ/p$-linear basis of $H^2(\bar S,\dbZ/p)$:
\[
\alp_{(x),3}, \ \alp_{(xy),3},\ \alp_{(xxy),3},\ \alp_{(xyy),3},\ \alp_{(xyz),3},\ \alp_{(xzy),3},
\]
where $x,y,z\in X$  and we assume that $x<y<z$.
\end{thm}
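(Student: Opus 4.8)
The plan is to recognize that this theorem is simply the explicit form of Theorem \ref{basis}(a) specialized to $n=3$, so essentially all the substantive work has already been done. First I would invoke Theorem \ref{basis}(a), which asserts that the cohomology elements $\alp_{w,3}$, as $w$ ranges over $\Lyn_{\leq3}(X)$, form a $\dbZ/p$-linear basis of $H^2(S^{[3,p]},\dbZ/p)=H^2(\bar S,\dbZ/p)$. This already gives a basis; what remains is purely to identify the indexing set.

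The only remaining task is therefore to unwind $\Lyn_{\leq3}(X)$ explicitly. For this I would appeal to Example \ref{examples of Lyndon words}, which enumerates all Lyndon words of length $\leq4$; discarding the words of length $4$ leaves precisely $(x)$ for $x\in X$, then $(xy)$, $(xxy)$, $(xyy)$ for $x<y$, and finally $(xyz)$, $(xzy)$ for $x<y<z$. This is exactly the list already recalled at the start of Section \ref{section on n=3}. Substituting these words into the basis furnished by Theorem \ref{basis}(a) yields the six families of cohomology classes in the statement, which completes the argument.

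Since both steps are bookkeeping, with the genuine content residing entirely in Theorem \ref{basis} and in the combinatorial classification of Lyndon words, there is no real obstacle to overcome. The only point requiring the slightest care is to confirm that the enumeration drawn from Example \ref{examples of Lyndon words} is complete and non-redundant in lengths $1$, $2$, and $3$: each listed word is indeed Lyndon under the chosen total order on $X$, distinct words yield distinct classes, and by \eqref{size of Lyn} their total count equals $\varphi_1(|X|)+\varphi_2(|X|)+\varphi_3(|X|)$. The latter provides a convenient consistency check against the dimension formula of Corollary \ref{dim of H2}.
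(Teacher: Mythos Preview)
Your proposal is correct and matches the paper's own treatment exactly: the paper simply states that this theorem is what Theorem~\ref{basis}(a) gives once one has listed $\Lyn_{\leq3}(X)$ explicitly (as done at the start of \S\ref{section on n=3}), with no further argument.
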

By Examples \ref{examples of  alp}, $\alp_{(x),3}=\Bock_{p^2,\bar S}(\chi_{x,\dbZ/p^2})$, and for every $x,y,z\in X$, $\alp_{(xyz),3}$ belongs to the triple Massey product $\langle\chi_{x,\dbZ/p},\chi_{y,\dbZ/p},\chi_{z,\dbZ/p}\rangle\subseteq H^2(\bar S,\dbZ/p)$.

We further recall that $\alp_{(xy),3}$ is the pullback to $H^2(\bar S,\dbZ/p)$ under $\bar\rho^{(xy)}_{\dbZ/p^2}\colon \bar S\to\dbU_3(\dbZ/p^2)^{[3,p]}$ of the cohomology element in $H^2(\dbU_3(\dbZ/p^2)^{[3,p]},\dbZ/p)$ corresponding to the central extension
\[
0\to\dbZ/p\to \dbU_3(\dbZ/p^2)\to \dbU_3(\dbZ/p^2)^{[3,p]}\to1.
\]
Alternatively, it has the following explicit description:
By Proposition \ref{properties of dbU}(a), $ \dbU_3(\dbZ/p^2)^{(3,p)}=I_3+\dbZ pE_{13}$, and let $\iota=\iota_{3,2}^\dbU\colon \dbU_3(\dbZ/p^2)^{(3,p)}\xrightarrow{\sim}\dbZ/p$ be the natural isomorphism.
By (\ref{theta and rho}) and (\ref{alp and del}), $\alp_{(xy),3}=-\trg(\theta)$, where $\theta=\iota\circ(\rho^{(xy)}_{\dbZ/p^2}|_{S^{(3,p)}})$ and
$\trg\colon H^1(S^{(3,p)},\dbZ/p)^S\xrightarrow{\sim} H^2(\bar S,\dbZ/p)$ is the transgression isomorphism.

Next we compute the matrix $((\tau_w^{p^{3-|w|}},\alp_{w',3})_3)=(\langle w,w'\rangle_3)$, where $w,w'\in\Lyn_{\leq3}(X)$:

\begin{prop}
\label{uuu}
For $w,w'\in\Lyn_{\leq3}(X)$ one has
\[
\langle w,w'\rangle_3=
\begin{cases}
\ \ 1, &\hbox{if } w=w';\\
-1,&\hbox{if } w=(xyz), \ w'=(xzy)  \hbox{ for some } x,y,z\in X, \ x<y<z;
\\
\ \ 0,&\hbox{otherwise}.
\end{cases}
\]
\end{prop}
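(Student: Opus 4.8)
The plan is to reduce the computation to a handful of explicit Magnus coefficients by exploiting the structural vanishing already established. First I would invoke Corollary \ref{upper-triangular}: ordering $\Lyn_{\leq3}(X)$ by $\preceq$, the matrix $\bigl(\langle w,w'\rangle_3\bigr)$ is upper-triangular unipotent. This records two of the three cases for free, namely the diagonal values $\langle w,w\rangle_3=1$ (Proposition \ref{vanishing of iota}(b)) and the vanishing of every entry with $w'\prec w$ (Proposition \ref{vanishing of iota}(a)). It therefore remains only to treat the entries with $w\prec w'$ and show that the unique nonzero one is $\langle(xyz),(xzy)\rangle_3=-1$.

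Next I would dispose of all rows indexed by a word $w$ of length $s\leq2$. If $|w|=1$, say $w=(x)$, then any $w'\succ w$ has length $\geq2$; by Proposition \ref{vanishing of iota}(c) a nonzero entry would force $w'$ to involve only the letter $x$, but no Lyndon word of length $\geq2$ is a power of a single letter, so no such $w'$ exists. If $|w|=2$, then $w'\succ w$ forces either $|w'|=3$, which is killed by Proposition \ref{vanishing of iota}(d) (as $s=2<s'=3<4=2s$), or $|w'|=2$ with $w'>_{\rm alp}w$ built from the letters of $w$, of which there are none among Lyndon words. Hence for $|w|\leq2$ only the diagonal entry survives.

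The substance is in the four rows indexed by the length-$3$ Lyndon words. Here $n=s=3$, so $\tau_w^{p^{n-s}}=\tau_w$, and for any word $w'$ of length $s'=3$ the coefficient ring in (\ref{description of pairing}) is $\dbZ/p^{n-s'+1}=\dbZ/p$ with $\iota_{3,3}$ the identity; thus $\langle w,w'\rangle_3=\eps_{w',\dbZ/p}(\tau_w)$. Since $\tau_w\in S^{(3,0)}$, Proposition \ref{Lam(tau w)}(a) identifies the degree-$3$ part of $\Lam_{\dbZ_p}(\tau_w)$ with $P_w$, so this coefficient equals $(P_w)_{w'}\bmod p$. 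I would then expand the four standard-factorization polynomials
\begin{align*}
P_{(xxy)} &= x(xy-yx)-(xy-yx)x, & P_{(xyy)} &= (xy-yx)y-y(xy-yx),\\
P_{(xyz)} &= x(yz-zy)-(yz-zy)x, & P_{(xzy)} &= (xz-zx)y-y(xz-zx),
\end{align*}
and read off the coefficients at the length-$3$ Lyndon words that are $\geq_{\rm alp}w$. In particular $P_{(xyz)}=xyz-xzy-yzx+zyx$ has coefficient $-1$ at $(xzy)$, giving $\langle(xyz),(xzy)\rangle_3=-1$, while in the remaining three expansions the coefficients at the admissible larger Lyndon words (e.g.\ $(xyy)$ in $P_{(xxy)}$) are all $0$.

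I expect the only genuine care to be the bookkeeping of admissible $w'$: one must be sure that no Lyndon word $w'$ lexicographically larger than $w$ and built from the letters of $w$ is overlooked. This is immediate once one observes that a length-$3$ polynomial $P_w$ is supported only on permutations of the multiset of letters of $w$; any $w'$ using a different multiset (such as $(xzz)$ against $w=(xyz)$) automatically has coefficient $0$, and the remaining finitely many anagrams are checked by inspection. No real obstacle arises — the proposition follows from combining the vanishing lemmas with this single short expansion.
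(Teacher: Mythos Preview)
Your proof is correct and follows essentially the same route as the paper: use Proposition~\ref{vanishing of iota} to reduce to a handful of length-$3$ coefficients, then compute those explicitly. One small slip: for $w=(x)$ it is not true that every $w'\succ w$ has length $\geq2$ (take $w'=(y)$ with $y>x$), but part (c) disposes of that case as well, so nothing is lost. The only cosmetic difference is that the paper computes the two nontrivial entries $\langle(xxy),(xyy)\rangle_3$ and $\langle(xyz),(xzy)\rangle_3$ via the commutator formula of Lemma~\ref{commutators} rather than by writing out $P_w$; the two computations are equivalent through Proposition~\ref{Lam(tau w)}(a).
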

\begin{proof}
In view of  Proposition \ref{vanishing of iota}, it is enough to show the assertion when $w\neq w'$, either $|w|=|w'|$ or $2|w|\leq|w'|$, and the letters of $w'$ appear in $w$.
Furthermore, when $|w|=|w'|$ we may assume that $w\leq_{\rm alp} w'$.

Thus when $w$ has one of the forms $(x),(xy),(xyy),(xzy)$ (where $x<y<z$) there is nothing more to show.

When $w=(xxy)$ with $x<y$ we need to check only the word $w'=(xyy)$.
Then  Lemma \ref{commutators} gives
\[
\begin{split}
\langle(xxy),(xyy)\rangle_3&=\eps_{(xyy),\dbZ/p}([x,[x,y]])\\
&=\eps_{(x),\dbZ/p}(x)\cdot\eps_{(yy),\dbZ/p}([x,y])-\eps_{(y),\dbZ/p}(x)\cdot\eps_{(xy),\dbZ/p}([x,y])\\
&=1\cdot0-0\cdot1=0.
\end{split}
\]

When $w=(xyz)$ with $x<y<z$  we need to check only the word $w'=(xzy)$.
Then  Lemma \ref{commutators} gives
\[
\begin{split}
\langle(xyz),(xzy)\rangle_3&=\eps_{(xzy),\dbZ/p}([x,[y,z]])\\
&=\eps_{(x),\dbZ/p}(x)\cdot\eps_{(zy),\dbZ/p}([y,z])-\eps_{(y),\dbZ/p}(x)\cdot\eps_{(xz),\dbZ/p}([y,z])\\
&=1\cdot(-1)-0\cdot0=-1.
\end{split}
\]
This completes the verification in all cases.
\end{proof}

In view of Examples \ref{Shuffles for small n}, Corollary \ref{structure theorem} gives rise to an epimorphism
\begin{equation}
\label{big epimorphism}
\begin{split}
\bigoplus_{x\in X}\dbZ/p&\oplus\Bigl(\bigl(\bigoplus_{x,y\in X}\dbZ(xy)\bigr)/\langle (xy)+(yx)\ |\ x,y\in X\rangle\Bigr)\tensor(\dbZ/p)\\
&\oplus\Bigl(\bigl(\bigoplus_{x,y,z\in X}\dbZ(xyz)\bigr)/\langle (xyz)+(xzy)+(zxy)\ |\ x,y,z\in X\rangle\Bigr)\tensor(\dbZ/p)\\
&\to H^2(\bar S,\dbZ/p).
\end{split}
\end{equation}
Moreover, for $x,y,z\in X$, $x<y<z$, we have
\[
\begin{split}
&(yx)=(x)\sha(y)-(xy)\\ 
&2(xx)=(x)\sha(x)\\ 
&(xyx)=(x)\sha(xy)-2(xxy)\\ 
&(yxx)=(x)\sha(yx)-(xx)\sha(y)+(xxy)\\
&(yxy)=(xy)\sha(y)-2(xyy)\\
&(yyx)=(yy)\sha(x)-(y)\sha(xy)+(xyy)\\
&(yxz)=(y)\sha(xz)-(xyz)-(xzy)\\
&(zxy)=(z)\sha(xy)-(xzy)-(xyz)\\
&(yzx)=(zx)\sha(y)-(x)\sha(zy)+(xzy)\\
&(zyx)=(yx)\sha(z)-(x)\sha(yz)+(xyz)\\
&3(xxx)=(x)\sha(xx). 
\end{split}
\]
These congruences and Example \ref{examples of Lyndon words} imply that
\[
\begin{split}
&\sum_{w\in X^2}\dbZ w\equiv \sum_{w\in \Lyn_2(X)}\dbZ w+\hbox{ 2-torsion}
\pmod{\Shuffles_2(X)},\\
&\sum_{w\in X^3}\dbZ w\equiv \sum_{w\in \Lyn_3(X)}\dbZ w+\hbox{ 3-torsion} \pmod{\Shuffles_3(X)}.\\
\end{split}
\]
Therefore, for $p>3$, the direct sum in (\ref{big epimorphism}) is the free $\dbZ/p$-module on the basis $\Lyn_{\leq3}(X)$.
Thus the epimorphism (\ref{big epimorphism})  maps the $\dbZ/p$-linear basis $1w$, $w\in\Lyn_{\leq3}(X)$, bijectively onto the $\dbZ/p$-linear basis $\alp_{w,3}$, $w\in\Lyn_{\leq3}(X)$ (see Theorem \ref{basis}).
Consequently we have:

\begin{thm}
\label{shuffle relations for n=3}
For $n=3$ and $p>3$,  (\ref{big epimorphism}) is an isomorphism.
Thus all relations in $H^2(S^{[3,p]},\dbZ/p)$ are consequences of the shuffle relations of Theorem \ref{shuffle relations}.
\end{thm}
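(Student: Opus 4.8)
The plan is to leverage two facts already established. First, by Corollary \ref{structure theorem} the map (\ref{big epimorphism}) is surjective for every $p$. Second, by Theorem \ref{basis}(a) the target $H^2(\bar S,\dbZ/p)$ has the Lyndon basis $\{\alp_{w,3}:w\in\Lyn_{\leq3}(X)\}$, and the map carries the class of a word $w$ to $\alp_{w,3}$. It therefore suffices to prove that the domain of (\ref{big epimorphism}) is spanned over $\dbZ/p$ by the classes of the Lyndon words in $\Lyn_{\leq3}(X)$: for then this spanning set is carried bijectively onto a basis of the codomain, a spanning set mapping onto a basis is necessarily itself linearly independent, and a surjection carrying a basis to a basis is an isomorphism.

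To prove the spanning claim I would proceed degree by degree, using the congruences displayed just before the theorem. In degree $1$ the summand $\bigoplus_{x\in X}\dbZ/p$ is already free on $\Lyn_1(X)=X$. In degrees $2$ and $3$ those congruences express each word of $X^2$, respectively $X^3$, modulo $\Shuffles_2(X)$, respectively $\Shuffles_3(X)$, as an integral combination of Lyndon words of the same length together with an element annihilated by $2$, respectively by $3$: the relations of the form $(yx)\equiv-(xy)$ remove the remaining non-Lyndon words, while the self-shuffles $(x)\sha(x)=2(xx)$ and $(x)\sha(xx)=3(xxx)$ exhibit the leftover generators as $2$- and $3$-torsion. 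Since tensoring an abelian group with $\dbZ/p$ annihilates its $m$-torsion whenever $p\nmid m$, the hypothesis $p>3$ guarantees that both the $2$-torsion and the $3$-torsion die, so after tensoring the degree-$s$ summand is spanned by the images of $\Lyn_s(X)$ for $s=2,3$.

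Putting the three degrees together, the whole domain of (\ref{big epimorphism}) is spanned over $\dbZ/p$ by the classes of the words in $\Lyn_{\leq3}(X)$, and by the first paragraph (\ref{big epimorphism}) is an isomorphism. The concluding assertion that every relation among the $\alp_{w,3}$ is a consequence of the shuffle relations is then immediate: injectivity of (\ref{big epimorphism}) says exactly that the kernel of the tautological map $\bigoplus_{s=1}^3\bigoplus_{w\in X^s}\dbZ/p\to H^2(\bar S,\dbZ/p)$, $w\mapsto\alp_{w,3}$, coincides with the $\dbZ/p$-span of the reductions of the shuffle products, i.e.\ with the shuffle relations of Theorem \ref{shuffle relations}.

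The one genuinely delicate point is the role of the hypothesis $p>3$, and it is transparent in this argument: it is precisely what kills the $3$-torsion $3(xxx)=(x)\sha(xx)$ in degree $3$ (and a fortiori the $2$-torsion $2(xx)=(x)\sha(x)$ in degree $2$). For $p=2$ or $p=3$ this torsion survives $\tensor\,\dbZ/p$, the domain of (\ref{big epimorphism}) becomes strictly larger than the Lyndon basis permits, and the map fails to be injective---mirroring the collapse already seen for $n=2$, $p=2$, where $\Bock_{2,\bar S}(\chi)=\chi\cup\chi$. All the remaining content is bookkeeping on top of the congruences, which are themselves elementary shuffle identities of the type worked out in Example \ref{example}.
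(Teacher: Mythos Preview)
Your proposal is correct and follows essentially the same route as the paper: use the displayed congruences to show that, after tensoring with $\dbZ/p$ for $p>3$, the domain of (\ref{big epimorphism}) is spanned by the classes of the Lyndon words of length $\leq3$, and then compare with the Lyndon basis of $H^2(\bar S,\dbZ/p)$ from Theorem \ref{basis}(a). The only minor difference is organizational: the paper asserts directly that the domain is \emph{free} on $\Lyn_{\leq3}(X)$ and then observes that a basis is sent to a basis, whereas you argue only that $\Lyn_{\leq3}(X)$ \emph{spans} the domain and then deduce linear independence from the fact that the images form a basis---your formulation is arguably cleaner, since the congruences by themselves give only spanning.
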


\begin{bibdiv}
\begin{biblist}

\bib{AdemKaragueuzianMinac99}{article}{
author={Adem, Alejandro},
author={Karagueuzian, Dikran B.},
author={Min\'a\v c, J\'an},
title={On the cohomology of Galois groups determined by Witt rings},
journal={Adv.\ Math.},
volume={148},
date={1999},
pages={105\ndash160},
}

\bib{Bogomolov91}{article}{
    author={Bogomolov, F. A.},
     title={On two conjectures in birational algebraic geometry},
 booktitle={Proc. of Tokyo Satellite conference ICM-90 Analytic and Algebraic Geometry},
    volume={},
      date={1991},
     pages={26\ndash52},
}

\bib{Borge04}{article}{
   author={Borge, Inger Christin},
   title={A cohomological approach to the modular isomorphism problem},
   journal={J. Pure Appl. Algebra},
   volume={189},
   date={2004},
   pages={7\ndash25},
}

\bib{ChapmanEfrat16}{article}{
author={Chapman, Michael},
author={Efrat, Ido},
title={Filtrations of the free group arising from the lower central series},
journal={J.\ Group Theory},
status={special issue in memory of O.\ Melnikov},
volume={19},
date={2016},
pages={405\ndash433},
}

\bib{CheboluEfratMinac12}{article}{
   author={Chebolu, Sunil K.},
   author={Efrat, Ido},
   author={Min{\'a}{\v{c}}, J{\'a}n},
   title={Quotients of absolute Galois groups which determine the entire Galois cohomology},
   journal={Math. Ann.},
   volume={352},
   date={2012},
   pages={205--221},
}

\bib{ChenFoxLyndon58}{article}{
   author={Chen, K.-T.},
   author={Fox, R. H.},
   author={Lyndon, R. C.},
   title={Free differential calculus. IV. The quotient groups of the lower  central series},
   journal={Ann. Math.},
   volume={68},
   date={1958},
   pages={81--95},
}

\bib{Dwyer75}{article}{
   author={Dwyer, William G.},
   title={Homology, Massey products and maps between groups},
   journal={J. Pure Appl. Algebra},
   volume={6},
   date={1975},
   pages={177--190},
}

\bib{Efrat14}{article}{
   author={Efrat, Ido},
   title={The Zassenhaus filtration, Massey products, and representations of profinite groups},
   journal={Adv. Math.},
   volume={263},
   date={2014},
   pages={389--411},
}

\bib{EfratMatzri15}{article}{
author={Efrat, Ido},
author={Matzri, Eliyahu},
title={Triple Massey products and absolute Galois groups},
journal={J. Eur. Math Soc.},
date={2015},
status={to appear},
eprint={arXiv:1412.7265},
}

\bib{EfratMinac11}{article}{
   author={Efrat, Ido},
   author={Min\'a\v c, J\'an},
   title={On the descending central sequence of absolute Galois groups},
   journal={Amer. J. Math.},
   volume={133},
   date={2011},
   pages={1503\ndash1532},
 }

\bib{Fenn83}{book}{
author={Fenn, Roger A.},
title={Techniques of Geometric Topology},
Series={London Math.\ Society Lect. Note Series},
volume={57},
publisher={Cambridge Univ. Press},
date={1983},
place={Cambridge}
}

\bib{FennSjerve84}{article}{
   author={Fenn, Roger},
   author={Sjerve, Denis},
   title={Basic commutators and minimal Massey products},
   journal={Canad. J. Math.},
   volume={36},
   date={1984},
   pages={1119--1146},
}

\bib{FriedJarden08}{book}{
   author={Fried, Michael D.},
   author={Jarden, Moshe},
   title={Field arithmetic},
   publisher={Springer-Verlag, Berlin},
   date={2008},
   pages={xxiv+792},
}

\bib{Forre11}{article}{
author={Forr\'e, Patrick},
title={Strongly free sequences and pro-$p$ groups of cohomological dimension $2$},
journal={J.\ reine angew.\ Math.},
volume={658},
date={2011},
pages={173\ndash192},
}

\bib{Hoechsmann68}{article}{
author={Hoechsmann, Klaus},
title={Zum Einbettungsproblem},
hournal={J.\ reine angew.\ Math.},
volume={229},
date={1968},
pages={81\ndash106},
}
	
\bib{Koch60}{article}{
    author={Koch, H.},
     title={\"Uber die Faktorgruppen einer absteigenden Zentralreihe},
     journal={Math.\ Nach.},
     volume={22},
     date={1960},
     pages={159\ndash161},
}

\bib{Koch02}{book}{
   author={Koch, Helmut},
   title={Galois theory of $p$-extensions},
   publisher={Springer, Berlin},
   date={2002},
   pages={xiv+190},
}

\bib{Labute66}{article}{
   author={Labute, John P.},
   title={Demu\v skin groups of rank $\aleph _0$},
   journal={Bull. Soc. Math. France},
   volume={94},
   date={1966},
   pages={211--244},
}

\bib{Labute67}{article}{
author={Labute, John},
title={Classification of Demu\v skin groups},
journal={Can.\ J.\ Math.},
volume={19},
date={1967},
pages={106\ndash132},
}

\bib{Labute06}{article}{
author={Labute, John},
title={Mild pro-$p$  groups and Galois groups of $p$-extensions of $\dbQ$},
journal={J.\ reine angew. Math.},
volume={596},
date={2006},
pages={155\ndash182},
}

\bib{LabuteMinac11}{article}{
author={Labute, John},
author={Min\'a\v c, J\'an},
title={Mild pro-$2$-groups and $2$-extensions of  $\mathbb{Q}$ with restricted ramification},
journal={J.\ Algebra},
volume={332},
date={2011},
pages={136\ndash158},
}

\bib{Lazard54}{article}{
   author={Lazard, Michel},
   title={Sur les groupes nilpotents et les anneaux de Lie},
   journal={Ann. Sci. Ecole Norm. Sup. (3)},
   volume={71},
   date={1954},
   pages={101\ndash190},
}

\bib{MinacSpira96}{article}{
  author={Min{\'a}{\v {c}}, J{\'a}n},
  author={Spira, Michel},
  title={Witt rings and Galois groups},
  journal={Ann. Math.},
  volume={144},
  date={1996},
  pages={35\ndash60},
}

\bib{MinacTan15a}{article}{
author={Min\'a\v c, J\'an},
author={T\^an, Nguyen Duy},
title={The Kernel Unipotent Conjecture and the vanishing of Massey products for odd rigid fields, {\rm (with an appendix by Efrat, I., Min\'a\v c, J.\  and T\^an, N. D.)}},
journal={Adv.\ Math.},
volume={273},
date={2015},
pages={242\ndash270},
}

\bib{MinacTan15b}{article}{
author={Min\'a\v c, J\'an},
author={T\^an, Nguyen Duy},
title={Triple Massey products over global fields},
journal={Doc.\  Math.},
volume={20},
date={2015},
pages={1467\ndash1480},
}

\bib{MinacTan16}{article}{
author={Min\'a\v c, J\'an},
author={T\^an, Nguyen Duy},
title={Triple Massey products vanish over all fields},
journal={J.\ London Math.\ Soc.},
volume={94},
date={2016},
pages={909\ndash932},
}

\bib{MinacTan17}{article}{
author={Min\'a\v c, J\'an},
author={T\^an, Nguyen Duy},
title={Triple Massey products and Galois theory},
journal={J.\ Eur.\ Math.\ Soc.},
volume={19},
date={2017},
pages={255\ndash284},
}

\bib{MinhPetitot00}{article}{
   author={Minh, Hoang Ngoc},
   author={Petitot, Michel},
   title={Lyndon words, polylogarithms and the Riemann $\zeta$ function},
   journal={Discrete Math.},
   volume={217},
   date={2000},
   pages={273--292},
}

\bib{Morishita12}{book}{
   author={Morishita, Masanori},
   title={Knots and primes},
   series={Universitext},
   publisher={Springer, London},
   date={2012},
   pages={xii+191},
}

\bib{NeukirchSchmidtWingberg}{book}{
  author={Neukirch, J{\"u}rgen},
  author={Schmidt, Alexander},
  author={Wingberg, Kay},
  title={Cohomology of Number Fields, Second edition},
  publisher={Springer},
  place={Berlin},
  date={2008},
}

\bib{Reutenauer93}{book}{
   author={Reutenauer, Christophe},
   title={Free Lie algebras},
   series={London Mathematical Society Monographs. New Series},
   volume={7},
   note={Oxford Science Publications},
   publisher={The Clarendon Press, Oxford University Press, New York},
   date={1993},
   pages={xviii+269},
}

\bib{Schmidt10}{article}{
author={Schmidt, Alexander},
title={\"Uber pro-$p$-Fundamentalgruppen markierter arithmetischer Kurven},
journal={J.\ reine angew. Math.},
volume={640},
date={2010},
pages={203\ndash235},
}

\bib{SerreDemuskin}{article}{
   author={Serre, Jean-Pierre},
   title={Structure de certains pro-$p$-groupes (d'apr\`es Demu\v skin)},
   conference={
      title={S\'eminaire Bourbaki (1962/63), Exp.\ 252},
   },
   label={Ser63},
}

\bib{SerreLie}{book}{
    author={Serre, Jean-Pierre},
     title={Lie Algebras and Lie Groups},
      publisher={Springer}
      date={1992},
}

\bib{Topaz15}{article}{
author={Topaz, Adam},
title={Reconstructing function fields from rational quotients of mod-$\ell$ Galois groups},
journal={Math.\ Ann.},
date={2016},
volume={366},
pages={337\ndash385},
}

\bib{VogelThesis}{thesis}{
author={Vogel, Denis},
title={Massey products in the Galois cohomology of number fields},
type={Ph.D.\ thesis},
place={UniversitÄ\"at Heidelberg},
date={2004},
}

\bib{Vogel05}{article}{
   author={Vogel, Denis},
   title={On the Galois group of 2-extensions with restricted ramification},
   journal={J. reine angew. Math.},
   volume={581},
   date={2005},
   pages={117--150},
}
		
\bib{Weir55}{article}{
   author={Weir, A. J.},
   title={Sylow $p$-subgroups of the general linear group over finite fields    of characteristic $p$},
   journal={Proc. Amer. Math. Soc.},
   volume={6},
   date={1955},
   pages={454--464},
}

\bib{Wickelgren12}{article}{
   author={Wickelgren, Kirsten},
   title={$n$-nilpotent obstructions to $\pi_1$ sections of $\Bbb P^1-\{0,1,\infty\}$ and Massey products},
   conference={
      title={Galois-Teichm\"uller theory and arithmetic geometry},
   },
   book={
      series={Adv. Stud. Pure Math.},
      volume={63},
      publisher={Math. Soc. Japan, Tokyo},
   },
   date={2012},
   pages={579--600},
  }

\end{biblist}
\end{bibdiv}

\end{document}